\title{3-Preprojective Algebras of Type D}
\author{Jordan Haden\\University of East Anglia\\jordan.haden@uea.ac.uk}
\theoremstyle{definition}
\newtheorem{defn}{Definition}[section]
\newtheorem{example}[defn]{Example}
\theoremstyle{remark}
\newtheorem{remark}[defn]{Remark}
\newtheorem{notation}[defn]{Notation}
\theoremstyle{plain}
\newtheorem{theorem}[defn]{Theorem}
\newtheorem{prop}[defn]{Proposition}
\newtheorem{lemma}[defn]{Lemma}
\newtheorem{corollary}[defn]{Corollary}
\newtheorem{theorem*}{Theorem}
\newcommand{\nn}{\mathbb{N}}
\newcommand{\zz}{\mathbb{Z}}
\newcommand{\cc}{\mathbb{C}}
\newcommand{\rr}{\mathbb{R}}
\newcommand{\dd}{\mathcal{D}}
\newcommand{\ee}{\mathcal{E}}
\newcommand{\mm}{\mathcal{M}}
\newcommand{\uu}{\mathcal{U}}
\renewcommand{\aa}{\mathcal{A}}
\renewcommand{\tt}{\mathcal{T}}
\newcommand{\jac}{\operatorname{\mathcal{J}}}
\newcommand{\id}{\operatorname{id}}
\newcommand{\aut}{\operatorname{Aut}}
\newcommand{\im}{\operatorname{Im}}
\newcommand{\soc}{\operatorname{Soc}}
\newcommand{\tr}{\operatorname{tr}}
\newcommand{\gldim}{\operatorname{gl.dim}}
\newcommand{\add}{\operatorname{add}}
\newcommand{\ext}{\operatorname{Ext}}
\newcommand{\derb}{\operatorname{\mathbf{D}^b}}
\newcommand{\parity}{\operatorname{par}}
\newcommand{\cof}{\operatorname{coef}}
\newcommand{\rad}{\operatorname{rad}}
\renewcommand{\mod}{\operatorname{mod}}
\renewcommand{\hom}{\operatorname{Hom}}
\newcommand{\cmark}{\ding{51}}
\newcommand{\xmark}{\ding{55}}
\begin{document}

\maketitle

\begin{abstract}
We present a family of selfinjective algebras of type D, which arise from the 3-preprojective algebras of type A by taking a $\mathbb{Z}_3$-quotient. We show that a subset of these are themselves 3-preprojective algebras, and that the associated 2-representation-finite algebras are fractional Calabi-Yau. In addition, we show our work is connected to modular invariants for SU(3).
\end{abstract}



\section{Introduction}

\subsection*{Overview}

Many mathematical objects admit a classification in terms of Dynkin diagrams, perhaps the most famous example being complex semisimple Lie algebras. A theorem of Gabriel says that the path algebra of a quiver has finite representation type if and only if its underlying graph is an ADE Dynkin diagram. The group $\zz_2$ acts on the type A diagrams by rotating them through $\pi$. One obtains the type D diagrams through quotienting by this action, duplicating the fixed vertex whenever one exists (see Table \ref{Table: Quotients of Dynkin diagrams}).

\begin{table}
\centering
    \begin{tabular}{c|c}
        Type A Dynkin diagram & $\zz_2$-quotient\\
        \hline
        $A_2:\ \begin{tikzcd} \bullet \arrow[r, no head] & \bullet \end{tikzcd}$ & $T_1:\ \begin{tikzcd} \bullet \arrow[no head, loop, distance=2em, in=35, out=325] \end{tikzcd}$\\
        $A_3:\ \begin{tikzcd} \bullet \arrow[r, no head] & \circ \arrow[r, no head] & \bullet \end{tikzcd}$ & $D_3:\ \begin{tikzcd}
                            & \circ                     \\
        \bullet \arrow[ru, no head] &                             \\
                            & \circ \arrow[lu, no head]
        \end{tikzcd}$\\
        $A_4:\ \begin{tikzcd} \bullet \arrow[r, no head] & \bullet & \bullet \arrow[l, no head] & \bullet \arrow[l, no head] \end{tikzcd}$ & $T_2:\ \begin{tikzcd} \bullet \arrow[r, no head] & \bullet \arrow[no head, loop, distance=2em, in=35, out=325] \end{tikzcd}$\\
        $A_5:\ \begin{tikzcd} \bullet \arrow[r, no head] & \bullet \arrow[r, no head] & \circ \arrow[r, no head] & \bullet \arrow[r, no head] & \bullet \end{tikzcd}$ & $D_4:\begin{tikzcd}
                           &                                                 & \circ \\
            \bullet \arrow[r, no head] & \bullet \arrow[ru, no head] \arrow[rd, no head] &       \\
                           &                                                 & \circ
    \end{tikzcd}$\\
        \vdots & \vdots
    \end{tabular}
    \caption{Type D Dynkin diagrams (and tadpole diagrams) arising as $\zz_2$-quotients of type A diagrams.}
    \label{Table: Quotients of Dynkin diagrams}
\end{table}

A related classification appears in \cite{evans2009ocneanu, evans2012nakayama}, where Evans and Pugh study Jacobian algebras of so-called $\aa\dd\ee$ graphs, introduced by Di Francesco and Zuber in work on SU(3) modular invariants \cite{di1990n}. The group $\zz_3$ acts on the type $\aa$ graphs by rotating them through $2\pi/3$. The type $\dd$ graphs arise from type $\aa$ by taking $\zz_3$-orbifolds, which amounts to quotienting by the action and triplicating the fixed vertex, whenever one exists. The type $\aa$ algebras are well-studied. Indeed, in \S\ref{Section: Operator algebras} we show they are isomorphic to the 3-preprojective algebras of type A. This article can be seen as an exploration of  the type $\dd$ algebras from the perspective of higher homological algebra.

In \cite{iyama2011cluster}, Iyama introduced $d$-representation-finite algebras: algebras which have global dimension at most $d$, and whose module category has a $d$-cluster tilting subcategory. One can define higher analogues of the Auslander-Reiten translates which restrict to this subcategory, and hence gain some understanding of the representation theory of the algebra, even if it has wild representation type. 

Herschend and Iyama showed that $d$-representation-finiteness is closely linked to the fractional Calabi-Yau property \cite{herschend2011n}. If an algebra $\Lambda$ has finite global dimension, the bounded derived category of its module category has a Serre functor, an autoequivalence satisfying a certain duality. If a power of the Serre functor is given by a shift, $\Lambda$ is said to be fractional Calabi-Yau. This property was introduced by Kontsevich to generalise properties of Calabi-Yau manifolds, which are important in theoretical physics.  

Given a $d$-representation-finite algebra $\Lambda$, one can construct its $(d+1)$-preprojective algebra $\Pi$. Note that $\Pi$ is always selfinjective, and if $\Lambda$ is basic then $\Pi$ is Frobenius, meaning it is isomorphic to its dual as a $\Pi$-$\Pi$-bimodule, provided one twists by some automorphism $\sigma$. We call $\sigma$ the Nakayama automorphism of $\Pi$. Grant showed that a $d$-representation-finite algebra is fractional Calabi-Yau if and only if the Nakayama automorphism of its $(d+1)$-preprojective algebra has finite order \cite{grant2022serre}.

In this article we present a family of selfinjective algebras we call type D, which are Morita equivalent to skew group algebras of the 3-preprojective algebras of type A. Our definition is standalone, in the sense that it makes no reference to type A, and we prove the Morita equivalence by showing that our definition agrees with a construction of Giovannini and Pasquali \cite{giovannini2019skew}. In \S\ref{Section: Operator algebras} we construct isomorphisms between our algebras and the type $\dd$ algebras of Evans and Pugh.

We show that one in three of the selfinjective algebras of type D are 3-preprojective. By considering their Nakayama automorphisms, we show that the corresponding 2-representation-finite algebras are fractional Calabi-Yau. Finally, in \S\ref{Section: 2AR Quivers} we give recipes to construct 2-Auslander-Reiten quivers for these algebras, on which one can see the fractional Calabi-Yau property quite explicitly.

\begin{notation}
Following the convention in \cite{evans2009ocneanu, evans2012nakayama}, we denote by $\dd^s$ the quiver which arises from $\aa^s$ (and label the corresponding algebras accordingly). However, only every third quiver is the quiver of a 3-preprojective algebra. We could have chosen to only label those with a $\dd$. However, it is not obvious how one should index in this case (see Table \ref{Table: Possibilities for labelling higher type D quivers}).
\end{notation}

\begin{table}
    \centering
    \begin{tabular}{c|c|c|c}
        Quiver & Our notation & 3-preprojective algebra? & Alternative notation\\
        \hline
        $\begin{tikzcd} \bullet \arrow[loop, distance=2em, in=325, out=35] \end{tikzcd}$ & $\dd^2$ & \xmark & $\tt^?$\\
        $\begin{tikzcd} \bullet \arrow[r, shift left] & \bullet \arrow[l, shift left] \arrow[loop, distance=2em, in=325, out=35] \end{tikzcd}$ & $\dd^3$ & \xmark & $\tt^?$\\
        - & $\dd^4$ & \cmark & $\dd^?$\\
        - & $\dd^5$ & \xmark & $\tt^?$\\
        - & $\dd^6$ & \xmark & $\tt^?$\\
        - & $\dd^7$ & \cmark & $\dd^?$\\
        \vdots & \vdots & \vdots & \vdots
    \end{tabular}
    \caption{Two options for notation. See Figure \ref{Figure: Examples of D^s} for the missing quivers.}
    \label{Table: Possibilities for labelling higher type D quivers}
\end{table}

\subsection*{Main results}

By \emph{algebra} we mean  associative, unital, finite-dimensional algebra over $k=\cc$. Modules are taken to be right modules unless otherwise stated. If $p,q$ are paths in some quiver, $pq$ means ``first $p$ then $q$". We take $\nn$ to contain 0.

Denote by $\succ$ the strict lexicographic order on $\nn^3$. Let $\omega\colon\nn^3\to\nn^3$, $\omega(x_0,x_1,x_2)=(x_1,x_2,x_0)$.

\begin{defn}
\label{Defn: D^s}
Let $s\in\zz$, $s\ge2$. Define 
\begin{align*}
    Q_0&=\{x\in\nn^3\mid x_0+x_1+x_2=s-1,\ x\succ\omega(x),\ x\succ\omega^2(x)\},\\
    Q_1&=\bigcup_{i,j=0}^2\{\alpha_{i,j}\colon x\longrightarrow\omega^j(x)+f_i\mid x,\omega^j(x)+f_i\in Q_0\},
\end{align*}
 where $f_0=(-1,1,0)$, $f_1=(0,-1,1)$ and $f_2=(1,0,-1)$.

If $s\not\equiv1$ (mod 3), let $\dd^s$ be the quiver with vertices $\dd_0^s=Q_0$ and arrows $\dd^s_1=Q_1$.

If $s=3t+1$ for some $t\in\zz^+$, write $X=(t,t,t)$, and let $\dd^s$ be the quiver with vertices \[\dd^s_0=Q_0\cup\{X_0, X_1, X_2\}\] (i.e. take three copies of $X$ indexed by $\{0,1,2\}$) and arrows \[\dd^s_1=Q_1\cup\{\beta_k\colon X-f_0\longrightarrow X_k,\ \gamma_k\colon X_k\longrightarrow X+f_2\mid k=0,1,2\}.\]
\end{defn}

\begin{notation}
An arrow $\alpha_{i,j}\colon x\longrightarrow\omega^j(x)+f_i$ in $\dd^s$ is uniquely determined by its source and indices, so we often denote it $e_x\alpha_{i,j}$. If $j=0$ we simplify notation further and just write $e_x\alpha_i$. Our convention when drawing $\dd^s$ is to label the ``$\alpha$" arrows using their indices alone.
\end{notation}

 Some examples of $\dd^s$ are presented in Figure \ref{Figure: Examples of D^s}.

\begin{figure}
\centering
\includegraphics{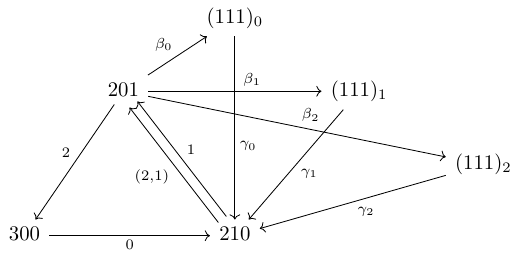}\\
\includegraphics{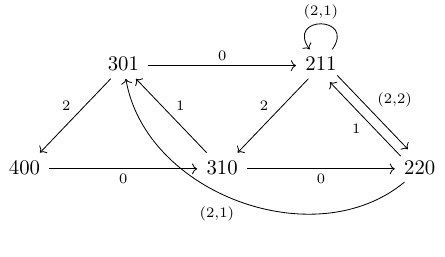}\\
\includegraphics{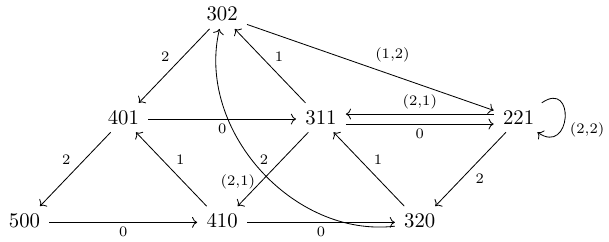}\\
\includegraphics[scale=0.83]{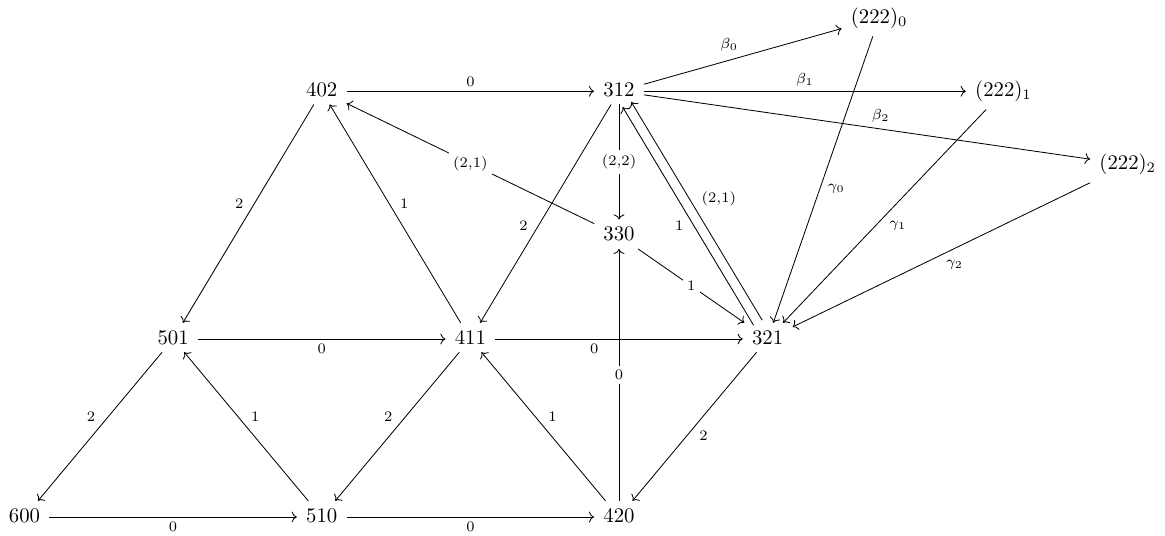}
\caption{Top to bottom: $\dd^4,\dd^5,\dd^6,\dd^7$.}
\label{Figure: Examples of D^s}
\end{figure}

Let $(Q,W)$ be a quiver with potential (QP) - a quiver together with a linear combination of cycles. By formally differentiating $W$ with respect to the arrows of $Q$, one obtains the \emph{Jacobian algebra} $\jac(Q,W)=kQ/\langle\partial_\alpha W\mid\alpha\in Q_1\rangle$. See e.g. \cite[\S2.1]{bocklandt2008graded} for a full exposition.

\begin{defn}
\label{Defn: Potential on D^s}
For each $s\ge2$, we define a Jacobian algebra $\Pi_\dd^s=\jac(\dd^s,W_\dd^s)$ over $\cc$. The potential is \[W_\dd^s=\sum_c\lambda_\dd(c)c,\] where the sum is taken over all 3-cycles in $\dd^s$, and $\lambda_\dd$ is defined as follows.

For $x,y\in\nn^3$, write $x\sim y$ if $x=y$, $\omega(x)=y$ or $\omega^2(x)=y$. Take expressions involving indices of arrows mod 3.
\begin{enumerate}
    \item Suppose $c=e_x\alpha_{i_0,j_0}\alpha_{i_1,j_1}\alpha_{i_2,j_2}$ is a product of three distinct arrows. If $j_0+j_1+j_2=0$ then \[\lambda_\dd(c)=
    \begin{cases}
        1&\text{if }(i_0+j_0,i_1-j_2,i_2)\sim(0,1,2),\\
        -1&\text{if }(i_0+j_0,i_1-j_2,i_2)\sim(0,2,1).
    \end{cases}\]
    Otherwise, $\lambda_\dd(c)=0$.
    \item If $c=e_x\alpha_{i,j}^3$ for a loop $e_x\alpha_{i,j}$ then \[\lambda_\dd(c)=
    \begin{cases}
        \frac{1}{3}&\text{if }(i+j,i-j,i)\sim(0,1,2),\\
        -\frac{1}{3}&\text{if }(i+j,i-j,i)\sim(0,2,1).
    \end{cases}\]
    \item If $s\equiv1$ (mod 3) then for each $k\in\{0,1,2\}$, \[\lambda_\dd(\alpha_1\beta_k\gamma_k)=-1,\ \lambda_\dd(\alpha_{2,1}\beta_k\gamma_k)=\zeta^k,\] where $\zeta=e^{2\pi i/3}$.
\end{enumerate}
\end{defn}

\begin{example}
\label{Example: Potentials on D^4, D^5}
The potential on $\dd^4$ is \[W_\dd^4=\alpha_0\alpha_1\alpha_2+\cancel{0\alpha_0\alpha_{2,1}\alpha_2}+\sum_{k=0}^2\left(\zeta^k\alpha_{2,1}\beta_k\gamma_k-\alpha_1\beta_k\gamma_k\right).\] Hence $\Pi_\dd^4$ is the algebra $\cc\dd^4$ modulo the relations
\begin{align*}
    \alpha_1\alpha_2, \;\; & \;\; \alpha_0\alpha_1,\\
    \sum_{k=0}^2\zeta^k\beta_k\gamma_k, \;\; & \;\; \alpha_2\alpha_0-\sum_{k=0}^2\beta_k\gamma_k,\\
    \{\zeta^k\gamma_k\alpha_{2,1}-\gamma_k\alpha_1\mid k=0,1,2\}, \;\; & \;\; \{\zeta^k\alpha_{2,1}\beta_k-\alpha_1\beta_k\mid k=0,1,2\}.
\end{align*}
The potential on $\dd^5$ is given by \[W_\dd^5=e_{400}\alpha_0\alpha_1\alpha_2+e_{310}\alpha_0\alpha_1\alpha_2-e_{301}\alpha_0\alpha_2\alpha_1+e_{301}\alpha_0\alpha_{2,2}\alpha_{2,1}-e_{211}\alpha_{2,1}\alpha_{2,2}\alpha_1+\frac{1}{3}e_{211}\alpha_{2,1}^3.\]
\end{example}

In \S\ref{Section: Type A} we recall the 3-preprojective algebras of type A, which we denote $\Pi_\aa^s$. The group $\zz_3$ acts on these by automorphisms, so we can consider the skew group algebras $\Pi_\aa^s\#\zz_3$. In \S\ref{Section: Morita equivalence} we apply a construction of \cite{giovannini2019skew} to prove the following.

\begin{theorem*}[\ref{Theorem 1 body}]
\label{Theorem 1}
For each $s\ge2$, $\Pi^s_\dd$ is Morita equivalent to $\Pi^s_\aa\#\zz_3$.
\end{theorem*}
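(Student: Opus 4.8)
The plan is to deduce the result from the skew-group construction of Giovannini and Pasquali \cite{giovannini2019skew}. Recall from \S\ref{Section: Type A} that $\Pi_\aa^s=\jac(\aa^s,W_\aa^s)$, where $\aa^s$ has vertex set $\{x\in\nn^3\mid x_0+x_1+x_2=s-1\}$ and arrows the three families of translations by $f_0,f_1,f_2$, and where $W_\aa^s$ is the alternating sum of all $3$-cycles. The group $\zz_3=\langle\omega\rangle$ acts on $\aa^s$ by permuting the coordinates of a vertex, which cyclically permutes the three arrow families, and one checks that this action fixes $W_\aa^s$, so the hypotheses of \cite{giovannini2019skew} are met. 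That paper then produces an explicit ``skew group'' quiver with potential $(\widetilde Q,\widetilde W)$ with $\jac(\widetilde Q,\widetilde W)$ Morita equivalent to $\Pi_\aa^s\#\zz_3$. Since $\Pi_\dd^s$ is basic and the basic representative of a Morita class is unique up to isomorphism, it suffices to identify $(\widetilde Q,\widetilde W)$ (or its basic reduction) with $(\dd^s,W_\dd^s)$.

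First I would match the quivers. The vertices of the basic version of $\widetilde Q$ are indexed by a $\zz_3$-orbit on $\aa^s_0$ together with an irreducible character of its stabiliser: a free orbit contributes one vertex, which I identify with its $\succ$-maximal representative, recovering $Q_0$; and when $s=3t+1$ the unique fixed vertex $X=(t,t,t)$ has stabiliser all of $\zz_3$, so its three characters $1,\zeta,\zeta^2$ give three vertices, which I label $X_0,X_1,X_2$. The arrows of $\widetilde Q$ come from pushing the arrows of $\aa^s$ through the quotient: an arrow $x\to x+f_i$ between free orbits becomes, after rewriting the target via its chosen representative, an arrow $\alpha_{i,j}\colon x\to\omega^j(x)+f_i$, recovering $Q_1$, while an arrow adjacent to $X$ splits into three according to the characters, giving the $\beta_k$ and $\gamma_k$. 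This identifies $\widetilde Q$ with $\dd^s$ as in Definition \ref{Defn: D^s}.

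Next I would match the potentials, which is the main obstacle. By construction $\widetilde W$ is a sum over $\zz_3$-orbits of the $3$-cycles in $W_\aa^s$, each orbit contributing one cycle whose coefficient is the original one twisted by the relevant characters. A free orbit of $3$-cycles not meeting $X$ contributes $\lambda_\dd(c)\,c$ with $\lambda_\dd(c)=\pm1$, the sign recording the orientation of the cycle, which is exactly what the $\sim(0,1,2)$ versus $\sim(0,2,1)$ dichotomy in Definition \ref{Defn: Potential on D^s}(1) encodes. An orbit of $3$-cycles fixed by $\omega$ sits at a vertex carrying a loop $e_x\alpha_{i,j}$ and collapses to $\tfrac13 e_x\alpha_{i,j}^3$, where $\tfrac13$ is the reciprocal of the order of the subgroup fixing the cyclic word and is precisely what makes the Jacobian relation $\partial_{\alpha_{i,j}}\bigl(\tfrac13\alpha_{i,j}^3\bigr)=\alpha_{i,j}^2$ come out correctly; this gives Definition \ref{Defn: Potential on D^s}(2). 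Finally, when $s\equiv1\pmod{3}$ the $3$-cycles through $X$ unfold over the split vertices into the terms $-\alpha_1\beta_k\gamma_k$ and $\zeta^k\alpha_{2,1}\beta_k\gamma_k$, the scalar $\zeta^k$ being the character twist attached to $X_k$, which is Definition \ref{Defn: Potential on D^s}(3). Assembling the contributions yields $\widetilde W=W_\dd^s$ up to right-equivalence of potentials, so $\Pi_\dd^s\cong\jac(\widetilde Q,\widetilde W)$ is Morita equivalent to $\Pi_\aa^s\#\zz_3$.

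The delicate points that I expect to absorb most of the work are: pinning down exactly which basic algebra \cite{giovannini2019skew} extracts and checking its hypotheses (in particular the invariance of $W_\aa^s$); bookkeeping the interaction of the character twists, the orientation signs, and the stabiliser factors so that every scalar of $W_\dd^s$ — especially the $\tfrac13$ on loops and the $\zeta^k$ at the fixed vertex — comes out with the right value; and handling the small cases $s=2,3$, where $\dd^s$ acquires loops and multiple arrows and the generic combinatorics degenerates.
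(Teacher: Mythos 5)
Your proposal follows essentially the same route as the paper: verify the hypotheses of the Giovannini--Pasquali skew-group construction (the paper does this via Lemmas \ref{Lemma: A^s is strongly planar} and \ref{Lemma: Z_3 acts by rotations}, i.e.\ strong planarity of $(\aa^s,W_\aa^s)$ and that $\zz_3$ acts by rotations --- mere invariance of $W_\aa^s$ is not quite the hypothesis, as you flag yourself), and then match the resulting QP $(\widetilde{\aa^s},\widetilde{W_\aa^s})$ with $(\dd^s,W_\dd^s)$ vertex-by-vertex, arrow-by-arrow, and cycle-by-cycle, including the $\tfrac13$ factor on loops (from $|\zz_3\cdot c|/3$) and the $\zeta^{-p(c)k}$ twists at the split vertex. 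The paper in fact proves equality $(\widetilde{\aa^s},\widetilde{W_\aa^s})=(\dd^s,W_\dd^s)$ rather than right-equivalence, but either suffices for the Morita statement.
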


In type A, one can always take cuts of $\Pi_\aa^s$ to obtain 2-representation-finite algebras $\Pi_\aa^s/\langle C\rangle$, which are fractional Calabi-Yau. In \S\ref{Section: Cuts} we show that the situation is different in type D: it is possible to take cuts of $\Pi_\dd^s$ to obtain 2-representation-finite algebras if and only if $s\equiv1$ (mod 3). 

\begin{theorem*}[\ref{Theorem 2 body}]
\label{Theorem 2}
If $s\equiv1$ (mod 3) and $C\subset\dd^s_1$ is a cut, then $\Pi_\dd^s/\langle C\rangle$ is 2-representation-finite, and its 3-preprojective algebra is $\Pi_\dd^s$.
\end{theorem*}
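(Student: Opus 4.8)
The plan is to identify $(\dd^s,W_\dd^s)$ as a self-injective quiver with potential in the sense of Herschend and Iyama \cite{herschend2011n} and then invoke their structure theory for such QPs. Recall the relevant statement: if $(Q,W)$ is a QP whose Jacobian algebra is finite-dimensional and self-injective, and $C\subseteq Q_1$ is a cut — a set of arrows meeting every term of $W$ in exactly one arrow, so that $W$ becomes homogeneous of degree $1$ under the grading with $\deg\alpha=1$ iff $\alpha\in C$ — then $\jac(Q,W)/\langle C\rangle$ is $2$-representation-finite, and its $3$-preprojective algebra is $\jac(Q,W)$ itself. Granting that $\Pi_\dd^s=\jac(\dd^s,W_\dd^s)$ is self-injective, Theorem \ref{Theorem 2 body} is then exactly this statement applied to $(\dd^s,W_\dd^s)$: all terms of $W_\dd^s$ are $3$-cycles, so the only hypothesis about $C$ is the cut condition, and cuts exist precisely when $s\equiv1\pmod3$ by \S\ref{Section: Cuts}.

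The key input is therefore self-injectivity of $\Pi_\dd^s$, and here I would use Theorem \ref{Theorem 1 body}: $\Pi_\dd^s$ is Morita equivalent to $\Pi_\aa^s\#\zz_3$. The algebra $\Pi_\aa^s$ is the $3$-preprojective algebra of a $2$-representation-finite algebra of type $A$ (recalled in \S\ref{Section: Type A}), hence finite-dimensional and self-injective (indeed Frobenius); since $\zz_3$ acts by automorphisms and $3$ is invertible in $\cc$, the skew group algebra $\Pi_\aa^s\#\zz_3$ is again finite-dimensional and self-injective; and self-injectivity is Morita invariant, so $\Pi_\dd^s$ is self-injective. One must also record the mild bookkeeping that makes $(\dd^s,W_\dd^s)$ an admissible QP: for $s\equiv1\pmod3$ the quiver $\dd^s$ is finite, connected, and — by a short inspection of Definition \ref{Defn: D^s}, of the kind underlying Table \ref{Table: Possibilities for labelling higher type D quivers} — has no loops, while $W_\dd^s$ is reduced, so that $\jac(\dd^s,W_\dd^s)$ really is the algebra $\Pi_\dd^s$ of Definition \ref{Defn: Potential on D^s}.

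I expect the main obstacle to be conceptual bookkeeping rather than hard mathematics: checking that the concrete data of Definition \ref{Defn: Potential on D^s} — the coefficients $\zeta^k$, the $3$-cycles with coefficient $0$ that appear formally and vanish, and the triplicated vertices $X_0,X_1,X_2$ with their arrows $\beta_k,\gamma_k$ — meets on the nose the hypotheses under which the Herschend–Iyama correspondence applies. Concretely, one must verify that a chosen cut $C$ genuinely makes $W_\dd^s$ homogeneous of degree $1$, i.e.\ that every $3$-cycle of $\dd^s$ supporting a term of $W_\dd^s$ contains exactly one arrow of $C$ (readily seen from the explicit potentials in Example \ref{Example: Potentials on D^4, D^5}), and that the grading thereby induced on $\Pi_\dd^s$ is the one in which $(\Pi_\dd^s)_0\cong\Pi_\dd^s/\langle C\rangle$. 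A secondary point to handle with care is the passage across the Morita equivalence of Theorem \ref{Theorem 1 body}: one should note that self-injectivity and finite-dimensionality transfer correctly between $\Pi_\dd^s$ and $\Pi_\aa^s\#\zz_3$, and that the conclusion $\Pi_3(\Pi_\dd^s/\langle C\rangle)\cong\Pi_\dd^s$ is an isomorphism of the literal algebras, via the canonical identification furnished by the structure theorem, not merely a Morita equivalence.
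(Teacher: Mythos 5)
Your proof is correct and follows the same overall architecture as the paper's: reduce everything to showing $\Pi_\dd^s$ is selfinjective, then invoke the Herschend--Iyama classification (Theorem \ref{Theorem: Classification of 2-rep-finite algebras}). The one genuine difference lies in the route to selfinjectivity. You appeal to the general principle that a skew group algebra $\Lambda\#G$ of a selfinjective algebra $\Lambda$ by a finite group with $|G|$ invertible in $k$ is again selfinjective -- a true and classical fact (Reiten--Riedtmann), but one you assert without citation. The paper instead uses a sharper input: Proposition \ref{Prop: Pi_A is Frobenius} says $\Pi_\aa^s$ is Frobenius with Nakayama automorphism exactly $\omega$, and $\omega$ generates the image of $\zz_3$ in $\aut(\Pi_\aa^s)$, so by Lemma \ref{Lemma: Skew group algebra is symmetric} (\cite[Cor 2.6]{giovannini2019skew}) the skew group algebra $\Pi_\aa^s\#\zz_3$ is not merely selfinjective but \emph{symmetric}, and symmetricity is Morita invariant. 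This buys the stronger conclusion that $\Pi_\dd^s$ is symmetric, i.e.\ has identity Nakayama automorphism; this extra information is not needed for the present theorem but is precisely what the proof of Theorem \ref{Theorem 3 body} exploits when it applies Theorem \ref{Theorem: FInite order Nakayama implies fCY} with $\sigma=\id$ and $k=1$. So your argument proves the stated theorem, but you should cite the Reiten--Riedtmann result for the skew group algebra step, and you lose the symmetricity that the paper records in passing and reuses later.

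Two small bookkeeping remarks. First, you rightly note $\dd^s$ has no loops when $s\equiv1\pmod 3$; the lemma in \S\ref{Section: Cuts} only exhibits loops for $s\equiv 0,2\pmod 3$, and indeed the no-loop condition in the $s\equiv1$ case is what allows cuts to exist at all, so this is consistent. Second, the nonexistence of cuts for $s\not\equiv1\pmod 3$ is not needed for this theorem -- the statement quantifies over cuts $C$ and is vacuous if none exist -- though it does justify restricting attention to $s\equiv1\pmod 3$.
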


In \S\ref{Section: Calabi-Yau} we prove that the resulting 2-representation-finite algebras are fractional Calabi-Yau. 

\begin{theorem*}[\ref{Theorem 3 body}]
\label{Theorem 3}
Let $s=3t+1$, where $t\in\zz^+$. For any cut $C\subset\dd^s_1$, $\Pi_\dd^s/\langle C\rangle$ is fractional Calabi-Yau of dimension $2t/(t+1)$.
\end{theorem*}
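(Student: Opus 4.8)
The plan is to deduce the statement from Grant's criterion \cite{grant2022serre} by computing the Nakayama automorphism of the selfinjective algebra $\Pi:=\Pi_\dd^s$. By Theorem~\ref{Theorem 2}, $\Lambda:=\Pi_\dd^s/\langle C\rangle$ is 2-representation-finite with 3-preprojective algebra $\Pi$, and the cut $C$ makes $\Pi$ into a $\zz$-graded selfinjective algebra generated over $\Pi_0=\Lambda$ by its degree-$1$ part. Grant's theorem says that $\Lambda$ is fractional Calabi-Yau exactly when the Nakayama automorphism $\sigma$ of $\Pi$ has finite order; the dimension itself is then read off the order of $\sigma$, the way $\sigma$ meshes with the grading, and the graded Gorenstein data of $\Pi$, all of which are determined by $W_\dd^s$ and the cut. (Concretely, the resulting $\tau_2$-periodicity can be visualised on the 2-Auslander-Reiten quivers of \S\ref{Section: 2AR Quivers}.)

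So the core of the argument is an explicit description of $\sigma$. Since $\Pi=\jac(\dd^s,W_\dd^s)$ is the Jacobian algebra of a selfinjective QP, $\sigma$ is induced by an automorphism of $(\dd^s,W_\dd^s)$, and on vertices it is the Nakayama permutation: for each $x$, the socle of the projective $e_x\Pi$ is the simple at the vertex $\sigma(x)$. I would compute this permutation by examining socles of indecomposable projectives via the relations of Definition~\ref{Defn: Potential on D^s}; on $Q_0\subset\nn^3$ I expect it to be an explicit affine map (a coordinate reflection composed with a power of $\omega$), together with a rule for the three exceptional vertices $X_0,X_1,X_2$. Once the permutation is fixed, the action of $\sigma$ on arrows is forced up to scalars governed by the coefficients of $W_\dd^s$ (the $\pm1$, $\pm\tfrac13$ and the $\zeta^k$); one checks these scalars are roots of unity, so $\sigma$ has a finite order $q$, from which — together with the grading data — the fractional Calabi-Yau dimension is recovered. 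The arithmetic should collapse, after cancelling a common factor, to $2t/(t+1)$.

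A cross-check, and an alternative route, runs through Theorem~\ref{Theorem 1}. Since $\Pi_\dd^s$ is Morita equivalent to $\Pi_\aa^s\#\zz_3$ and the Nakayama automorphism of a skew group algebra is induced, up to a character of $\zz_3$, by that of $\Pi_\aa^s$ together with the generating automorphism, one can read $\sigma$ off the Nakayama automorphism of $\Pi_\aa^s$ recalled in \S\ref{Section: Type A}; and indeed, for $s=3t+1$ one has $2t/(t+1)=2(s-1)/(s+2)$, which is exactly the fractional Calabi-Yau dimension of the corresponding type A 2-representation-finite algebra — as one expects, since equivariantisation by $\zz_3$ preserves both the relevant power of the Serre functor and the matching shift. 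Independence of the cut $C$ then follows because cuts related by mutation give derived-equivalent algebras and the fractional Calabi-Yau dimension is a derived invariant.

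The step I expect to be the main obstacle is the local behaviour of $\sigma$ at the exceptional vertices $X_0,X_1,X_2$ and the arrows $\beta_k,\gamma_k$. This is precisely where type D departs from type A and where the potential carries the nontrivial phases $\zeta^k$, so deciding whether $\sigma$ fixes or cyclically rotates the $X_k$, with what scalar twist, and getting the correct normalisation of the graded Gorenstein data there, is the delicate point; away from these vertices the computation should mirror the type A case verbatim.
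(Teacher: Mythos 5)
Your overall strategy (Grant's criterion plus derived invariance to reduce to one cut) is right, but you have the centre of gravity of the proof in the wrong place, and one of your two expected obstacles is in fact a non-issue.

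The key simplification you miss is that $\Pi_\dd^s$ is \emph{symmetric}, not merely selfinjective. This is already established in the paper's Theorem~\ref{Theorem 2 body}: since the Nakayama automorphism $\omega$ of $\Pi_\aa^s$ generates $\im(\zz_3)\subseteq\aut(\Pi_\aa^s)$, Lemma~\ref{Lemma: Skew group algebra is symmetric} gives that $\Pi_\aa^s\#\zz_3$ is symmetric, and symmetry is Morita-invariant. So the Nakayama automorphism $\sigma$ of $\Pi_\dd^s$ is the identity. Your plan to compute $\sigma$ as a nontrivial coordinate reflection composed with a power of $\omega$, plus a delicate rule at $X_0,X_1,X_2$ with phases $\zeta^k$, is chasing something that does not exist; the ``main obstacle'' you flag would not arise. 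In Grant's formula $2N/(k+N)$ one simply takes $k=1$, so the whole theorem reduces to showing the homogeneity parameter $N$ equals $t$.

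That identification of $N$ is where the actual work lies, and you only gesture at it (``graded Gorenstein data'', ``the way $\sigma$ meshes with the grading'') without saying how to get it. The paper picks the explicit cut $K$, takes the vertex $x=(3t,0,0)$, and does a careful inductive path computation (Lemma~\ref{Lemma: Degree of longest path}) to show $\soc(e_x\Pi)$ is generated by a single path $p$ with $g_K(p)=t$; combined with $e_x\Pi\cong(\Pi e_x)^*\{-N\}$ from Lemma~\ref{Lemma: l function}, this forces $N=t$. There is no analogue of this computation in your sketch, so the proposal as written has a genuine gap precisely where the paper's proof has its technical core.

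A smaller point: your arithmetic cross-check $2t/(t+1)=2(s-1)/(s+2)$ treats the fractional Calabi-Yau dimension as a rational number, but per the paper's Remark after Definition~\ref{Defn: Fractional Calabi-Yau} it is a pair of integers. For $s=3t+1$ the type A value is $6t/(3t+3)$, which is \emph{not} the same pair as $2t/(t+1)$; the statement that ``equivariantisation preserves the relevant power of the Serre functor and the matching shift'' glosses over exactly this change of denominator, which is why the direct computation of $N$ is needed rather than inherited from type A.
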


\section{3-preprojective algebras of type A}

\label{Section: Type A}

We recall some background on $d$-representation-finite algebras and their $(d+1)$-preprojective algebras, before presenting an important example.

\begin{defn}
\label{Defn: d-representation-finite}
\cite[Def 2.1-2]{iyama2011n} Let $\Lambda$ be an algebra, $d\in\zz^+$. We call $M\in\mod\Lambda$ a \textit{$d$-cluster tilting object} if
\begin{align*}
    \add M&=\{N\in\mod\Lambda\mid \ext^i_\Lambda(M,N)=0\ \forall i\in\{1,\dots,d-1\}\}\text{ and}\\
    \add M&=\{N\in\mod\Lambda\mid\ext^i_\Lambda(N,M)=0\ \forall i\in\{1,\dots,d-1\}\}.
\end{align*}
We call $\Lambda$ \textit{$d$-representation-finite} if $\gldim\Lambda\le d$ and $\mod\Lambda$ contains a $d$-cluster tilting object.
\end{defn}

Let $\Lambda$ be a $d$-representation-finite algebra. The \emph{$d$-Auslander-Reiten translates} are \cite[Thm 1.4.1]{iyama2007higher} 
\begin{align*}
\tau_d&\coloneqq\tau\Omega^{d-1}\colon\underline{\mod}\Lambda\to\overline{\mod}\Lambda,\\
\tau_d^-&\coloneqq\tau^-\Omega^{1-d}\colon\overline{\mod}\Lambda\to\underline{\mod}\Lambda,
\end{align*}
where $\tau$, $\tau^-$ are the classical Auslander-Reiten translates and $\Omega$, $\Omega^-$ are the syzygy and cosyzygy functors, respectively (see e.g. \cite[\S IV.2]{assem2006elements}). These translates allow us to define a generalisation of the classical preprojective algebra of a quiver.

\begin{defn}
\cite[\S2]{herschend2011n} Let $\Lambda$ be a $d$-representation-finite algebra. The \emph{$(d+1)$-preprojective algebra} of $\Lambda$ is \[\Pi(\Lambda)=\bigoplus_{i\ge0}\hom_\Lambda(\Lambda,\tau_d^{-i}\Lambda).\] If $f\colon\Lambda\to\tau_d^{-i}\Lambda$ and $g\colon\Lambda\to\tau_d^{-j}\Lambda$, their product is \[gf=\tau_d^{-i}(g)\circ f\colon\Lambda\to\tau_d^{-(i+j)}\Lambda.\] There is a natural $\zz$-grading on $\Pi(\Lambda)$, called the \emph{tensor grading}, where the degree $i$ part is $\hom_\Lambda(\Lambda,\tau_d^{-i}\Lambda)$.
\end{defn}

2-representation-finite algebras are particularly well-understood, thanks to a result of Herschend and Iyama. We need the following notion.

\begin{defn}
\cite[Def 3.1]{herschend2011selfinjective}
If $(Q,W)$ is a QP, then to each subset $C\subseteq Q_1$ we associate a grading $g_C$ on $kQ$, given on arrows by \[g_C(\alpha)=
\begin{cases}
    1 & \text{if } \alpha\in C,\\
    0 & \text{else.}
\end{cases}\]
A subset $C\subseteq Q_1$ is called a \textit{cut} if $W$ is homogeneous of degree 1 with respect to $g_C$. If $C$ is a cut then $g_C$ induces a grading on $\jac(Q,W)$, and we call the degree 0 part $\jac(Q,W)_C$ a \textit{truncated Jacobian algebra}.
\end{defn}

Note that $\jac(Q,W)_C\cong\jac(Q,W)/\langle C\rangle$.

As in \cite[Def 3.6]{herschend2011selfinjective}, we call a QP $(Q,W)$ \textit{selfinjective} if $\jac(Q,W)$ is a selfinjective algebra. For general background on selfinjective, Frobenius and symmetric algebras see e.g. \cite{farnsteiner2005self}.

\begin{theorem}
\label{Theorem: Classification of 2-rep-finite algebras}
\cite[Thm 3.11]{herschend2011selfinjective} 
\begin{enumerate}
    \item If $(Q,W)$ is a selfinjective QP and $C\subseteq Q_1$ is a cut, then $\jac(Q,W)_C$ is 2-representation-finite, and its 3-preprojective algebra is $\jac(Q,W)$.
    \item Every basic 2-representation-finite algebra arises this way.
\end{enumerate}
\end{theorem}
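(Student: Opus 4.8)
This is \cite[Thm 3.11]{herschend2011selfinjective}; I outline the argument one would reconstruct. Write $E=kQ_0$, $V=kQ_1$, $D=\hom_k(-,k)$, and let $T_\Lambda(-)$ denote the tensor algebra over $\Lambda$. Given a selfinjective QP $(Q,W)$ and a cut $C$, put $\Pi=\jac(Q,W)$ and $\Lambda=\jac(Q,W)_C\cong\Pi/\langle C\rangle$; since $W$ is $g_C$-homogeneous of degree $1$, the algebra $\Lambda$ is the quiver algebra on the arrows $Q_1\setminus C$ modulo the relations $\partial_\alpha W$ with $\alpha\in C$ (these being the only $g_C$-homogeneous relations of degree $0$), and $\Pi=\bigoplus_{i\ge0}\Pi_i$ becomes graded with $\Pi_0=\Lambda$.

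The plan for part (1) is: first bound $\gldim\Lambda\le2$, then identify $\Pi$ with the $3$-preprojective algebra $\Pi(\Lambda)$, then deduce $2$-representation-finiteness. For the first step I would use the self-dual projective $\Pi$-bimodule resolution furnished by the potential (the Ginzburg complex; cf.\ \cite{bocklandt2008graded}), available because $\Pi$ is selfinjective:
\[0\to\Pi\otimes_E E\otimes_E\Pi\to\Pi\otimes_E V^\ast\otimes_E\Pi\to\Pi\otimes_E V\otimes_E\Pi\to\Pi\otimes_E\Pi\to\Pi\to0.\]
The cut makes every term graded with homogeneous differentials, and since a cut forces Gorenstein parameter $1$ the leftmost term $\Pi\otimes_E E\otimes_E\Pi$ sits in internal degrees $\ge1$; taking degree-$0$ parts (an exact operation, as the grading is a vector-space decomposition and the terms specialise to projective $\Lambda$-bimodules) collapses the complex to
\[0\to\Lambda\otimes_E(kC)^\ast\otimes_E\Lambda\to\Lambda\otimes_E k(Q_1\setminus C)\otimes_E\Lambda\to\Lambda\otimes_E\Lambda\to\Lambda\to0,\]
a length-$2$ projective bimodule resolution of $\Lambda$, so $\gldim\Lambda\le2$. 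From this resolution one reads off that $\ext^2_\Lambda(D\Lambda,\Lambda)$ is the $\Lambda$-bimodule generated by the arrows of $C$, and the formula $\Pi(\Lambda)\cong T_\Lambda\ext^2_\Lambda(D\Lambda,\Lambda)$ (valid for $\gldim\Lambda\le2$) presents $\Pi(\Lambda)$ as $kQ$ modulo the relations forced in that tensor algebra, the tensor grading matching path length in $C$. The remaining point of this step is that those relations are exactly $\langle\partial_\alpha W\mid\alpha\in Q_1\rangle$ --- in particular the relations $\partial_\alpha W$ with $\alpha\notin C$ must be deduced, and this is where the self-duality of the whole Ginzburg complex (not merely its degree-$0$ part) is used --- so $\Pi(\Lambda)\cong\Pi$. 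Finally, $\gldim\Lambda\le2$ together with $\Pi(\Lambda)$ finite-dimensional and selfinjective forces $\Lambda$ to be $2$-representation-finite: in the higher Auslander--Reiten theory of \cite{iyama2011n,herschend2011n} this guarantees that only finitely many $\tau_2^{-i}\Lambda$ are nonzero and that $\add\bigoplus_{i\ge0}\tau_2^{-i}\Lambda$ is a $2$-cluster tilting subcategory of $\mod\Lambda$.

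For part (2) I would reverse this. Let $\Lambda$ be basic $2$-representation-finite; then $\gldim\Lambda\le2$ and $\Pi:=\Pi(\Lambda)$ is finite-dimensional and selfinjective. One must realise $\Pi$ as a Jacobian algebra: taking its Gabriel quiver $Q_\Pi$, the self-dual minimal projective bimodule resolution of $\Pi$ --- available because $\Pi$ is the $3$-preprojective algebra of a global-dimension-$2$ algebra --- shows a minimal relation set for $\Pi$ is cut out by the cyclic derivatives of a single potential $W$, so $\Pi\cong\jac(Q_\Pi,W)$. The tensor grading assigns every arrow of $Q_\Pi$ a degree in $\{0,1\}$, and homogeneity of each relation $\partial_\alpha W$ (degree $0$ if $\alpha$ has degree $1$, degree $1$ otherwise) forces every cyclic term of $W$ to contain exactly one degree-$1$ arrow; hence the degree-$1$ arrows form a cut $C$ with $W$ of $g_C$-degree $1$, and $\Lambda=\Pi_0=\jac(Q_\Pi,W)_C$ arises as in part (1).

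The main obstacle is the homological dictionary invoked on both sides: that selfinjectivity of a finite-dimensional Jacobian algebra is precisely what makes the Ginzburg-type bimodule complex a self-dual resolution, and that this self-duality is what both bounds $\gldim\Lambda\le2$ and upgrades the tautological tensor-algebra presentation $T_\Lambda\ext^2_\Lambda(D\Lambda,\Lambda)$ of $\Pi(\Lambda)$ to the full Jacobian presentation $\jac(Q,W)$ --- and, in the converse direction, manufactures the potential from a minimal relation set. Once that is in hand, extracting the cut and assembling the $2$-cluster tilting subcategory are essentially formal.
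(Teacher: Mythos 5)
The paper only cites this as \cite[Thm 3.11]{herschend2011selfinjective} rather than proving it, so there is no in-paper proof to compare against; your reconstruction is a reasonable sketch of the argument in that reference, and the overall architecture — bound $\gldim\Lambda\le2$, identify $\Pi(\Lambda)\cong T_\Lambda\ext^2_\Lambda(D\Lambda,\Lambda)$ with $\jac(Q,W)$, then invoke the characterisation of 2-representation-finiteness via selfinjectivity of $\Pi(\Lambda)$, and reverse the construction for part (2) — matches.

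There is, however, a genuine conceptual error in your first step. You assert that the length-three Ginzburg bimodule complex is a ``self-dual projective $\Pi$-bimodule resolution, available because $\Pi$ is selfinjective.'' This cannot be: a non-semisimple selfinjective algebra has infinite global dimension (equivalently, infinite projective dimension of $\Pi$ as a bimodule), so it admits no finite projective bimodule resolution. The length-three Ginzburg complex is a resolution precisely when $\jac(Q,W)$ is 3-Calabi-Yau, which a finite-dimensional non-semisimple algebra never is. Selfinjectivity of $\Pi$ has nothing to do with exactness of this complex. What the argument actually requires is exactness of the degree-zero truncation of that complex, and \emph{that} is equivalent to the claim $\gldim\Lambda\le2$ you are trying to prove. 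In Herschend--Iyama this bound is a nontrivial separate result (their Theorem 3.6 / Keller's work) whose input is finite-dimensionality of $\Pi$, not selfinjectivity; selfinjectivity only enters at the very end, to upgrade ``$\gldim\Lambda\le2$ and $\Pi(\Lambda)$ finite-dimensional'' to ``$\Lambda$ is 2-representation-finite.'' As written, your step-one derivation of $\gldim\Lambda\le2$ is circular. The same slip recurs in part (2), where you appeal to ``the self-dual minimal projective bimodule resolution of $\Pi$'' — again, no such finite resolution exists for a finite-dimensional selfinjective $\Pi$; what one actually uses is Keller's theorem that the 3-preprojective algebra of a $\gldim\le2$ algebra is $H^0$ of a Ginzburg dg algebra, from which the Jacobian presentation and potential are extracted. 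The remaining ingredients (the tensor grading yielding a cut, and the bijection between arrows of degree $1$ and $C$) are fine.
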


If $\jac(Q,W)_C$ is basic 2-representation-finite, then the grading $g_C$ on $\jac(Q,W)$ corresponds to the tensor grading.

We now recall the 3-preprojective algebras of type A.

\begin{defn}
\cite[Def 5.1]{iyama2011n}
Let $s\in\zz$, $s\ge2$. Let $\aa^s$ be the quiver with vertices \[\aa^s_0=\{x\in\nn^3\mid x_0+x_1+x_2=s-1\}\] and arrows \[\aa_1^s=\bigcup_{i=0}^2\{\alpha_i\colon x\longrightarrow x+f_i\mid x,x+f_i\in\aa^s_0\},\] where $f_0=(-1,1,0)$, $f_1=(0,-1,1)$ and $f_2=(1,0,-1)$.
\end{defn}

For example, $\aa^4$ is the following quiver. \[\includegraphics{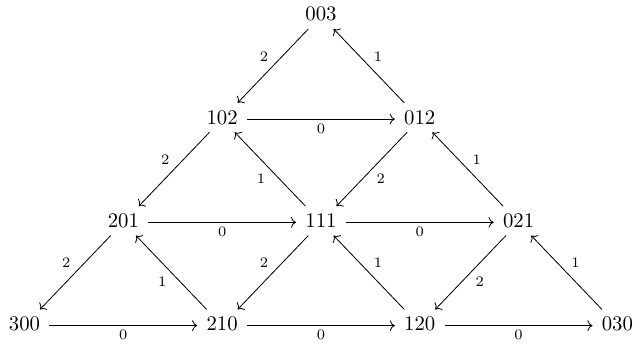}\]

\begin{defn}
For each $s\ge2$, we define a Jacobian algebra $\Pi_\aa^s=\jac(\aa^s,W_\aa^s)$ over $\cc$. The potential is \[W_\aa^s=\sum_{c}\lambda_\aa(c)c,\] where the sum is taken over all $3$-cycles $c=e_x\alpha_{i_0}\alpha_{i_1}\alpha_{i_2}$ in $\aa^s$, and \[\lambda_\aa(c)=
\begin{cases}
    1 & \text{if } (i_0,i_1,i_2)\sim(0,1,2),\\
    -1 & \text{if } (i_0,i_1,i_2)\sim(0,2,1).
\end{cases}\]
Informally, this is the sum of all anti-clockwise 3-cycles minus the sum of all clockwise 3-cycles.
\end{defn}

\begin{remark}
If $\alpha_ie_x$ lies on an edge of $\aa^s$ then $\partial_{\alpha_ie_x}W_\aa^s=e_x\alpha_{i+1}\alpha_{i-1}$, while if $\alpha_ie_x$ is an internal arrow then $\partial_{\alpha_ie_x}W_\aa^s=e_x(\alpha_{i+1}\alpha_{i-1}-\alpha_{i-1}\alpha_{i+1})$. Hence $\Pi_\aa^s$ is the path algebra of $\aa^s$ modulo the relations
\begin{enumerate}
    \item each length two path which starts and ends on the same edge of $\aa^s$ and whose midpoint is not on that edge is zero, 
    \item each rhombus in $\aa^s$ commutes.
\end{enumerate}
Thus $\Pi_\aa^s$ is precisely the algebra called $\widehat{\Lambda}^{(2,s)}$ in \cite[Def 5.1]{iyama2011n}.
\end{remark}

\begin{notation}
\label{Notation: Nakayama automorphism}
Let $s\ge2$. Recall $\omega\colon\nn^3\to\nn^3$, $\omega(x_0,x_1,x_2)=(x_1,x_2,x_0)$. It is clear that $\omega$ permutes $\aa^s_0$, inducing an automorphism of $\aa^s$ such that \[(x\overset{i}\longrightarrow x+f_i)\mapsto(\omega(x)\overset{i-1}\longrightarrow\omega(x)+f_{i-1}).\] This is well-defined since $\omega(x+f_i)=\omega(x)+f_{i-1}$. We abuse notation and write $\omega$ for the induced automorphism of $\cc\aa^s$.
\end{notation}

\begin{prop}
\label{Prop: Pi_A is Frobenius}
\cite[Thm 3.5]{herschend2011n}. For any $s\ge2$, the algebra $\Pi_\aa^s$ is Frobenius, and its Nakayama automorphism is $\omega$.
\end{prop}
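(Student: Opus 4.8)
The plan is to obtain selfinjectivity essentially for free from higher Auslander--Reiten theory, upgrade it to the Frobenius property because the quiver $\aa^s$ has no repeated vertices, and then identify the Nakayama automorphism by computing the socles of the indecomposable projectives and exhibiting an explicit symmetrising form. (The result is \cite[Thm 3.5]{herschend2011n}; the following is a self-contained approach.)

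First I would recall that $\Pi_\aa^s/\langle C\rangle$ is $2$-representation-finite for every cut $C$ of $(\aa^s,W_\aa^s)$ --- cuts exist, for instance the set of all $\alpha_0$-arrows, since each $3$-cycle of $W_\aa^s$ contains exactly one such arrow --- and that its $3$-preprojective algebra is $\Pi_\aa^s$. As the $(d+1)$-preprojective algebra of a $d$-representation-finite algebra is always selfinjective, $\Pi_\aa^s$ is selfinjective; and since its vertices are pairwise distinct points of $\nn^3$, it is basic, hence Frobenius. So the remaining task is to identify the Nakayama automorphism $\sigma$, pinned down up to inner automorphisms by a nondegenerate linear form $\phi\colon\Pi_\aa^s\to\cc$ with $\phi(ab)=\phi(\sigma(b)a)$.

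Next I would determine how $\sigma$ acts on idempotents by computing $\soc(e_x\Pi_\aa^s)$ for each vertex $x$. Using the presentation recalled above --- rhombi commute, and a length-two path that starts and ends on one edge of the triangle with midpoint off that edge is zero --- one shows that out of each $x=(x_0,x_1,x_2)$ there is a maximal nonzero path with target $\omega^{-1}(x)=(x_2,x_0,x_1)$, and that this path is unique up to a nonzero scalar in $\Pi_\aa^s$ (as it must be, the socle of an indecomposable projective over a Frobenius algebra being simple); essential uniqueness is seen by repeatedly commuting arrows past one another in the triangular grid $\aa^s$ and invoking the edge relations to rule out alternatives. Hence $\soc(e_x\Pi_\aa^s)\cong S_{\omega^{-1}(x)}$, so the socle permutation is $\omega^{-1}$ and $\sigma$ acts on idempotents exactly as $\omega$ (cf.\ Notation \ref{Notation: Nakayama automorphism}). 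The case $s=2$ is a good sanity check: there $\Pi_\aa^2$ is the Jacobian algebra of the oriented $3$-cycle with every path of length $2$ zero, and $\soc(e_x\Pi_\aa^2)=S_{\omega^{-1}(x)}$ is immediate.

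It remains to see that $\sigma$ agrees with $\omega$ on arrows too, i.e.\ that $\sigma$ is not $\omega$ followed by some diagonal automorphism rescaling arrows. For this I would make the maximal paths $p_x\colon x\to\omega^{-1}(x)$ compatible with $\omega$, normalising so that $\omega(p_x)=p_{\omega(x)}$, and take $\phi$ to be the linear map that on $e_x\Pi_\aa^s e_{\omega^{-1}(x)}$ returns the coefficient of $p_x$ and vanishes on $e_x\Pi_\aa^s e_y$ for $y\ne\omega^{-1}(x)$. Nondegeneracy of $\phi$ is clear from a normal-form basis, and the required identity $\phi(ab)=\phi(\omega(b)a)$ reduces, for paths $a,b$, to comparing the $p$-coefficients of $ab$ and of $\omega(b)a$ after rewriting both into normal form; this comes out precisely because $\omega$ preserves $W_\aa^s$, hence every defining relation, and so commutes with the rewriting. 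I expect this last step to be the real obstacle: one must control the combinatorics of maximal paths --- existence, essential uniqueness, and the way leading coefficients transform under the relations --- uniformly in $s$, which is cleanest if one first fixes, for each ordered pair of vertices, a monomial normal form for elements of $\Pi_\aa^s$; then the basis, the nondegeneracy of $\phi$, and the twisted symmetry all fall out together. Alternatively one could sidestep the explicit computation by quoting Herschend--Iyama's general formula for the Nakayama automorphism of a higher preprojective algebra in terms of the Serre functor of $\Lambda^{(2,s)}$, which in type $A$ evaluates to the rotation $\omega$.
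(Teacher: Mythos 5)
The paper itself offers no proof here: Proposition~\ref{Prop: Pi_A is Frobenius} is cited directly to \cite[Thm 3.5]{herschend2011n}, so there is no internal argument to compare against, and the alternative you mention at the end (``quoting Herschend--Iyama's general formula'') is essentially what the paper does.

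Your independent route is plausible in outline, but two points need care. First, a mild logical worry: you open by ``recalling'' that $\Pi_\aa^s/\langle C\rangle$ is $2$-representation-finite with $3$-preprojective algebra $\Pi_\aa^s$. In the paper's own exposition this fact is \emph{deduced} from the Proposition you are trying to prove (via Theorem~\ref{Theorem: Classification of 2-rep-finite algebras}, which assumes the QP is selfinjective), so you must be explicit that you are citing Iyama's direct proof \cite[Prop 5.48]{iyama2011n} --- otherwise the argument is circular. With that citation in place, the chain ``$(d+1)$-preprojective of a $d$-representation-finite algebra is selfinjective; basic selfinjective implies Frobenius'' is sound. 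Second, the substantive step --- computing $\soc(e_x\Pi_\aa^s)$ and producing a nondegenerate form $\phi$ with $\phi(ab)=\phi(\omega(b)a)$ --- is sketched rather than proved, and you yourself flag it as ``the real obstacle.'' The existence of a nonzero maximal path to $\omega^{-1}(x)$, the monomial normal form, the compatible choice $\omega(p_x)=p_{\omega(x)}$, and the twisted symmetry of $\phi$ each require an induction over the triangular grid analogous to (and no easier than) the one the paper carries out for type D in Lemma~\ref{Lemma: Degree of longest path}; as written these are assertions, not arguments. There is also a convention dependence ($\soc(e_x\Pi)\cong S_{\nu^{-1}(x)}$ versus $S_{\nu(x)}$) which silently controls whether you land on $\omega$ or $\omega^{-1}$; it is worth pinning this down against the paper's bimodule convention $D\Pi\cong{}_1\Pi_\sigma$ so the answer comes out as $\omega$ and not its inverse. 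None of this is a wrong idea --- it is the expected hands-on proof --- but it is a proposal with the hardest lemma still open.
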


Theorem \ref{Theorem: Classification of 2-rep-finite algebras} then implies that, for any $s\ge2$ and cut $C\subset\aa^s_1$, $\Pi_\aa^s/\langle C\rangle$ is 2-representation-finite, and its 3-preprojective algebra is $\Pi_\aa^s$ \cite[Prop 5.48]{iyama2011n}.

Closely related to $d$-representation-finiteness is the fractional Calabi-Yau property. For an algebra $\Lambda$ with $\gldim\Lambda<\infty$, let $\dd=\derb(\mod\Lambda)$ be its bounded derived category. Denote by $\Sigma\colon\dd\to\dd$ the shift functor, and by \[\nu=-\otimes_\Lambda^\mathbf{L}D\Lambda\colon\dd\to\dd\] the derived Nakayama functor, where $D=\hom_\cc(-,\cc)$. Note that $\nu$ is a Serre functor, meaning $\hom_\dd(X,Y)\cong D\hom_\dd(Y,\nu X)$ naturally in $X$ and $Y$ \cite[\S 4.6]{happel1988triangulated}. By \cite[Prop 3.3]{bondal1990representable}, there exists a natural isomorphism $n\colon\nu\Sigma\to\Sigma\nu$ making $(\nu,n)$ a \emph{triangle functor} - for a definition see \cite[\S2.5]{keller2008calabi}.

\begin{defn}
\label{Defn: Fractional Calabi-Yau}
\cite[\S2.6]{keller2008calabi} Let $\Lambda$ be an algebra with $\gldim\Lambda<\infty$. Then $\Lambda$ is \textit{fractional Calabi-Yau} of dimension $N/m$ if there exists an isomorphism of triangle functors \[(\nu,n)^m\cong(\Sigma,-\id_{\Sigma^2})^N\] in $\dd$ for some $N,m\in\zz$, $m\neq0$.
\end{defn}

\begin{remark}
One should treat $N/m$ as a pair of integers, not as a rational number.
\end{remark}

\begin{example}
\cite{dyckerhoff2019simplicial}
For any cut $C\subset\aa^s_1$, $\Pi_\aa^s/\langle C\rangle$ is fractional Calabi-Yau of dimension $2(s-1)/(s+2)$.
\end{example}

\section{Morita equivalence of $\Pi_\dd^s$ and $\Pi_\aa^s\#\zz_3$}

\label{Section: Morita equivalence}

Let $\Lambda$ be an algebra and $G$ be a finite group acting on $\Lambda$. Recall the \textit{skew group algebra} $\Lambda\#G$ is the algebra with underlying vector space $\Lambda\otimes_k kG$, and multiplication defined by \[(a\otimes g)(b\otimes h)=ag(b)\otimes gh\] for $a,b\in\Lambda$ and $g,h\in G$.

In \cite{giovannini2019skew}, the authors consider the case $\Lambda=\jac(Q,W)$ is a Jacobian algebra and $G$ is a finite cyclic group. If certain assumptions are satisfied, they construct a QP $(\widetilde{Q},\widetilde{W})$ such that $\jac(\widetilde{Q},\widetilde{W})$ is Morita equivalent to $\Lambda\#G$. These assumptions are satisfied when $(Q,W)$ is \emph{strongly planar} and $G$ \emph{acts by rotations} \cite[Lem 6.5]{giovannini2019skew}.

\begin{defn}
\cite[Def 8.1]{herschend2011selfinjective} Let $(Q,W)$ be a QP. Its \textit{canvas} $X_{(Q,W)}$ is the 2-dimensional CW complex defined as follows.
\begin{enumerate}
    \item $X_{(Q,W)}^0=Q_0$.
    \item The 1-cells are indexed by $Q_1$, and the attaching maps $\phi^1_\alpha\colon\{0,1\}\to Q^0$ satisfy $\phi_\alpha^1(0)=s(\alpha)$ and $\phi_\alpha^1(1)=t(\alpha)$. Let $\varepsilon_\alpha^1\colon[0,1]\to X_{(Q,W)}$ be a characteristic map extending $\phi_\alpha^1$.
    \item The 2-cells are indexed by cycles $c=\alpha_0\cdots\alpha_{l-1}$ appearing in $W$, and the attaching maps $\phi_c^2\colon S^1\to X_{(Q,W)}^1$ satisfy \[\phi^2_c\left(\cos\left(\frac{2\pi}{l}(i+t)\right),\sin\left(\frac{2\pi}{l}(i+t)\right)\right)=\varepsilon_{\alpha_i}^1(t)\] for integers $0\le i<s$ and real numbers $0\le t< 1$.
\end{enumerate}
Informally, the 1-skeleton is the underlying graph of $Q$, and to obtain the canvas we glue a 2-cell to each cycle appearing in $W$. For general background on CW complexes see e.g. \cite[\S0]{hatcher2002algebraic}.
\end{defn}

\begin{defn}
\cite[Def 6.3]{giovannini2019skew} We call a QP $(Q,W)$ \emph{strongly planar} if there is an embedding $\varepsilon\colon X_{(Q,W)}\to\rr^2$ such that $\im\varepsilon$ is homeomorphic to a disk.
\end{defn}

\begin{lemma}
\label{Lemma: A^s is strongly planar}
For all $s\ge2$, $(\aa^s,W_\aa^s)$ is strongly planar.
\end{lemma}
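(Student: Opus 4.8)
The plan is to exhibit an explicit embedding of the canvas $X_{(\aa^s,W_\aa^s)}$ into $\rr^2$ whose image is a triangle (hence homeomorphic to a disk), using the natural description of $\aa^s$ as a triangular grid. First I would recall that $\aa^s_0=\{x\in\nn^3\mid x_0+x_1+x_2=s-1\}$ is precisely the set of lattice points of a size-$(s-1)$ triangle in the plane $\{x_0+x_1+x_2=s-1\}\subset\rr^3$; composing with the affine isomorphism onto $\rr^2$ given by, say, $(x_0,x_1,x_2)\mapsto x_1\cdot v_1+x_2\cdot v_2$ for two fixed non-collinear vectors $v_1,v_2\in\rr^2$, I obtain a map $\varepsilon^0\colon\aa^s_0\to\rr^2$. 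The vectors $f_0,f_1,f_2$ map to $v_1$, $v_2-v_1$, $-v_2$ respectively, and since these three image vectors sum to zero and are pairwise non-parallel, each arrow $\alpha_i\colon x\to x+f_i$ can be sent to the straight-line segment between $\varepsilon^0(x)$ and $\varepsilon^0(x+f_i)$; one checks that distinct arrows give segments meeting only at shared endpoints, so this extends $\varepsilon^0$ to an embedding $\varepsilon^1$ of the 1-skeleton onto the usual triangulation of the triangle into $(s-1)^2$ small triangles.

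Next I would handle the 2-cells. The 3-cycles appearing in $W_\aa^s$ are exactly the clockwise and anti-clockwise boundary loops of the small upward- and downward-pointing triangles in the grid — this is the content of the remark identifying $\Pi_\aa^s$ with $\widehat\Lambda^{(2,s)}$, and it can be read off directly: a 3-cycle $e_x\alpha_{i_0}\alpha_{i_1}\alpha_{i_2}$ requires $f_{i_0}+f_{i_1}+f_{i_2}=0$, which forces $\{i_0,i_1,i_2\}=\{0,1,2\}$, and the two cyclic orderings correspond to the two orientations of one of the two triangle shapes incident to $x$. For each such cycle $c$ I send the 2-cell to the corresponding small closed triangle in $\rr^2$ via an obvious homeomorphism from the disk that restricts on the boundary circle to the prescribed attaching map $\phi^2_c$ reparametrised through $\varepsilon^1$. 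Since every small triangle of the grid carries exactly the 2-cells of its two boundary 3-cycles (giving the same image triangle) and adjacent small triangles overlap only along shared edges already in the 1-skeleton, the union of all these maps is a well-defined continuous injection $\varepsilon\colon X_{(\aa^s,W_\aa^s)}\to\rr^2$. Its image is the full big triangle, which is homeomorphic to a disk, and since $X$ is compact and $\rr^2$ Hausdorff, $\varepsilon$ is a homeomorphism onto its image; this is exactly strong planarity.

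The main obstacle, and the only place requiring genuine care rather than bookkeeping, is verifying that the proposed map really is an \emph{embedding} — i.e. injective and a homeomorphism onto its image — rather than merely a continuous surjection onto the triangle. Concretely one must confirm: (i) no two lattice points collide, which follows because $\varepsilon^0$ is the restriction of an affine isomorphism; (ii) the open 1-cells are pairwise disjoint and disjoint from the 0-cells, which follows from the geometry of the standard triangular grid once one notes that the three edge directions $v_1,v_2-v_1,-v_2$ are distinct lines; and (iii) the open 2-cells are pairwise disjoint and disjoint from the 1-skeleton — here the subtlety is that the potential assigns \emph{two} 2-cells (clockwise and anti-clockwise) to each small triangle, so one cannot ask these two to have disjoint images. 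The clean way around this is to observe that the attaching maps of those two cells differ only by orientation-reversal of $S^1$, so they can be sent to the same small triangle in $\rr^2$ by homeomorphisms that agree after this reparametrisation; formally, a CW complex with such a pair of 2-cells on a triangle is still realised in the plane by collapsing the pair onto a single triangular face. I would state this reduction carefully (or cite the analogous treatment in \cite{herschend2011selfinjective, giovannini2019skew}) and then the remaining injectivity checks are routine planar-geometry verifications on the grid.
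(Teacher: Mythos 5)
Your overall strategy is the same as the paper's: embed vertices and edges of $\aa^s$ affinely into a plane so that the image of the canvas is the closed triangle $\{x\in\rr^3\mid x_0+x_1+x_2=s-1,\ x_0,x_1,x_2\ge0\}$, then observe this triangle is homeomorphic to a disk. The paper does this in three lines, simply stating the embedding and the image; your write-up is correct but considerably more elaborate, spelling out the injectivity checks that the paper leaves implicit.

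However, the ``subtlety'' you flag in point (iii) is not actually present, and the workaround you propose would be wrong if it were. Each small triangle of the grid carries \emph{exactly one} 3-cycle in $W_\aa^s$, not two. The arrows of $\aa^s$ only go in the directions $f_0,f_1,f_2$; given an upward-pointing triangle $\{x,x+f_0,x+f_0+f_1\}$, the only directed 3-cycle around it is (up to cyclic rotation) $e_x\alpha_0\alpha_1\alpha_2$ --- there is no arrow $x\to x-f_2$, so no reversed cycle exists. The cycle $e_x\alpha_0\alpha_2\alpha_1$ that you pair with it goes around a \emph{different} small triangle (the downward-pointing one across the $\alpha_0$-edge), not the same triangle with opposite orientation. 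That is precisely why the informal description of $W_\aa^s$ is ``anti-clockwise cycles minus clockwise cycles'': upward and downward triangles carry cycles of opposite sign, and every small triangle appears exactly once. Consequently the 2-cells of the canvas are in bijection with the small triangles and the obvious map is an honest embedding with no identification needed. Note also that if there \emph{were} two 2-cells mapping onto the same face, ``collapsing the pair onto a single triangular face'' would give a continuous map but not an injective one, hence not an embedding, so that proposed repair would not establish strong planarity; fortunately the situation never arises.
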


\begin{proof}
Identify $\rr^2$ with the plane $P=\{x\in\rr^3\mid x_0+x_1+x_2=s-1\}$. Embed the 0-cells (vertices of $\aa^s$) in the obvious way, and embed each $1$-cell $D_{e_x\alpha_i}^1$ as the line segment in $P$ joining $x$ and $x+f_i$. The induced embedding $\varepsilon\colon X_{(\aa^s,W_\aa^s)}\to P$ satisfies $\im\varepsilon=\{x\in P\mid x_0,x_1,x_2\ge 0\}$, which is a closed triangle lying in the plane and therefore homeomorphic to a disk.
\end{proof}

\begin{defn}
\cite[Def 6.4]{giovannini2019skew}
Let $(Q,W)$ be a strongly planar QP, and let $G$ be a finite cyclic group acting on $kQ$. Then $G$ is said to act on $(Q,W)$ \emph{by rotations} if
\begin{enumerate}
    \item there is an embedding $X_{(Q,W)}\to\rr^2$ such that the action of a generator of $G$ is induced by a rotation of the plane;
    \item the action of $G$ is faithful;
    \item every cycle $c$ appearing in $W$ is one of the following types:
    \begin{enumerate}
        \item[(i)] $c$ goes through no vertices fixed by $G$;
        \item[(ii)] $c$ goes through precisely one vertex fixed by $G$ (counted with multiplicity);
        \item[(iii)] $c$ goes through precisely one vertex not fixed by $G$ (counted with multiplicity);
        \item[(iv)] $c$ goes through only vertices fixed by $G$. 
    \end{enumerate}
\end{enumerate}
\end{defn}

\begin{lemma}
\label{Lemma: Z_3 acts by rotations}
For all $s\ge2$, $\zz_3$ acts on $(\aa^s,W_\aa^s)$ by rotations.
\end{lemma}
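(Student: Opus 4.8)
The plan is to verify the three conditions in the definition of acting by rotations, using the concrete model for the canvas established in the proof of Lemma \ref{Lemma: A^s is strongly planar}. Recall there we embedded $X_{(\aa^s,W_\aa^s)}$ into the plane $P=\{x\in\rr^3\mid x_0+x_1+x_2=s-1\}$ as the closed triangle $\Delta=\{x\in P\mid x_0,x_1,x_2\ge0\}$, with vertex $x$ placed at the point $x$. First I would observe that $\omega$, which cyclically permutes the coordinates, is the restriction to $P$ of an honest rotation of $\rr^3$ by $2\pi/3$ about the line $\rr(1,1,1)$; this line meets $P$ in the single point, the barycentre $b=\tfrac{s-1}{3}(1,1,1)$, and the induced map of $P$ is a rotation of the plane about $b$ by $2\pi/3$. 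Since $\omega$ is exactly the automorphism of $kQ$ generating the $\zz_3$-action (Notation \ref{Notation: Nakayama automorphism}), and it carries vertices to vertices and $1$-cells to $1$-cells (as $\omega(x+f_i)=\omega(x)+f_{i-1}$), the embedding intertwines the action with this rotation, giving condition (1). Faithfulness (condition (2)) is immediate: $\omega$ has order $3$ on $\aa^s_0$ because, for instance, no point of $\aa^s_0\subset\nn^3$ is fixed unless $(s-1)/3\in\nn$, and even then the fixed point is unique and $\omega$ still acts nontrivially on the rest; more simply, $\omega$ visibly has order $3$ as a permutation of coordinates and $\zz_3$ is simple, so the action is faithful.

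Next I would handle condition (3), the classification of cycles in $W_\aa^s$. The cycles appearing in $W_\aa^s$ are exactly the $3$-cycles $e_x\alpha_{i_0}\alpha_{i_1}\alpha_{i_2}$ bounding the small triangles of the triangulated region $\Delta$ — equivalently, the boundaries of the unit-area ``upward'' and ``downward'' triangles with vertices in $\aa^s_0$. The barycentre $b$ is the unique point of $P$ fixed by $\omega$, so the only way a cycle can pass through a fixed vertex is if $b\in\aa^s_0$, which happens precisely when $s\equiv1\pmod 3$; in that case $b=X\coloneqq\tfrac{s-1}{3}(1,1,1)$ is a vertex of $\aa^s$. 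When $s\not\equiv1\pmod 3$ there are no fixed vertices at all, and I claim every $3$-cycle then goes through no fixed vertex, i.e.\ is of type (i) — there is nothing further to check. When $s\equiv1\pmod 3$, I would note that $X$ lies in the interior of $\Delta$ and is surrounded by exactly six small triangles; the three ``pointing towards'' configurations and three ``pointing away'' configurations each use $X$ once, so every cycle through $X$ uses it with multiplicity one, i.e.\ is of type (ii), while all remaining $3$-cycles avoid $X$ and are of type (i). Since there are no non-fixed vertices through which any $3$-cycle passes more than once (each $3$-cycle is a genuine triangle with three distinct vertices), types (iii) and (iv) can only occur vacuously, and in all cases condition (3) holds.

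The main obstacle, such as it is, is purely bookkeeping: making precise the claim that the $3$-cycles of $W_\aa^s$ are exactly the boundaries of the elementary triangles of $\Delta$, and that $X$ (when it exists) lies on exactly six of them each with multiplicity one. I would dispatch this by writing down the two families of $3$-cycles explicitly — for an ``anticlockwise'' triangle, $x\xrightarrow{0}x+f_0\xrightarrow{1}x+f_0+f_1\xrightarrow{2}x$ using $f_0+f_1+f_2=0$, and similarly the ``clockwise'' ones — and checking that the six cycles incident to a fixed interior vertex $y$ are obtained by letting $y$ play each of the three roles in each of the two orientations. A brief remark that each such cycle is a simple triangle (no repeated vertices, since $f_0,f_1,f_2$ are pairwise non-parallel and nonzero) then rules out multiplicities greater than one, completing the verification. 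No genuinely hard step is anticipated; the content is the identification of $\omega$ with a rotation of the plane and the local picture around the barycentre.
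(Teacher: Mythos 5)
Your proof is correct and follows essentially the same route as the paper: identifying $\omega$ with the rotation of the plane $P$ about the barycentre $\tfrac{s-1}{3}(1,1,1)$, checking faithfulness, and then classifying cycles by observing that a $3$-cycle passes through distinct vertices and there is at most one fixed vertex (so every cycle is of type (i) or (ii)). Two minor comments: the paper's faithfulness check is cleaner --- it simply exhibits $(s-1,0,0)$ as a non-fixed vertex for $s\ge2$, whereas your ``more simply, $\omega$ visibly has order $3$ as a permutation of coordinates'' elides the point that one needs a non-fixed vertex in $\aa^s_0$ (and thus uses $s\ge2$); and the detailed local picture of six triangles around the barycentre is unnecessary, since the single observation that each $3$-cycle has three distinct vertices already rules out multiplicity greater than one.
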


\begin{proof}
Recall the automorphism $\omega$ of $\cc\aa^s$ from Notation \ref{Notation: Nakayama automorphism}. Clearly $\omega^3=\id$, so there is a group action 
\begin{align*}
    \zz_3&\to\aut(\cc\aa^s),\\
        j&\mapsto\omega^j.
\end{align*}
\begin{enumerate}
    \item Let $\varepsilon\colon X_{(\aa^s,W_\aa^s)}\to P$ be the embedding of Lemma \ref{Lemma: A^s is strongly planar}. Then $\omega$ is induced by rotating the plane $P$ clockwise through $\frac{2\pi}{3}$, about the point $(\frac{s-1}{3},\frac{s-1}{3},\frac{s-1}{3})$.
    \item Since $s\ge2$, $(s-1,0,0)$ is a vertex of $\aa^s$ not fixed by $\omega$ or $\omega^2$, so the action of $\zz_3$ is faithful.
    \item A vertex $x=(x_0,x_1,x_2)\in\aa^s_0$ is fixed by $\zz_3$ if and only if $x_0=x_1=x_2$. Since $x_0+x_1+x_2=s-1$, this occurs if and only if $s=3t+1$ for some $t\in\zz^+$, in which case there is a unique fixed vertex $(t,t,t)$. Let $c$ be a cycle appearing in $W_\aa^s$. Then $c$ passes through any given vertex of $\aa^s$ at most once. Since there is at most one vertex fixed by $\zz_3$, $c$ is either type (i) or type (ii).
\end{enumerate}
\end{proof}

\begin{theorem}
\cite[Thm 3.20]{giovannini2019skew}
Let $(Q,W)$ be a strongly planar QP and $G$ be a finite cyclic group acting by rotations. Then $G$ acts on $\jac(Q,W)$, and there is an explicit construction of a QP $(\widetilde{Q},\widetilde{W})$ and an idempotent $\eta\in\jac(Q,W)\#G$ such that $\jac(\widetilde{Q},\widetilde{W})\cong\eta(\jac(Q,W)\#G)\eta$.
\end{theorem}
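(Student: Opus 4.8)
The plan is to build the data $(\widetilde Q,\widetilde W)$ and $\eta$ by combining the classical description of skew group algebras of admissible quotients of path algebras with a case analysis, carried out along the canvas $X_{(Q,W)}$, of how the arrows of $Q$ and the cycles of $W$ behave near the vertices fixed by $G$. First I would check that $G$ does act on $\Lambda\coloneqq\jac(Q,W)$: one verifies that $W$ is a $G$-eigenvector --- this is where one uses that a generator $g$ of $G$, acting as a rotation, permutes the cycles of $W$ and their coefficients compatibly --- so the Jacobian ideal $\langle\partial_\alpha W\mid\alpha\in Q_1\rangle$ is $G$-stable and the action descends. Since $|G|$ is invertible in $k$, $\Lambda\#G$ is Morita equivalent to a basic algebra, and I would recall the explicit recipe (following Reiten--Riedtmann, or via covering theory) for its Gabriel quiver: each vertex idempotent $e_x\otimes 1$ stays primitive when the $G$-orbit of $x$ is free, and otherwise splits as $e_x\otimes 1=\sum_\chi\eta_{x,\chi}$ over the characters $\chi$ of the cyclic isotropy group $G_x$, with $\eta_{x,\chi}=\tfrac1{|G_x|}\sum_{h\in G_x}\chi(h)^{-1}(e_x\otimes h)$. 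Choosing one vertex per orbit, together with all the $\eta_{x,\chi}$ at each fixed vertex, defines $\eta$; I would also record that $\eta$ is a \emph{full} idempotent, so that $\jac(\widetilde Q,\widetilde W)\cong\eta(\Lambda\#G)\eta$ will moreover be Morita equivalent to $\Lambda\#G$.

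The arrows between two chosen vertices of $\widetilde Q$ are read off from $\eta_{\widetilde y}\bigl(\rad(\Lambda\#G)/\rad^2\bigr)\eta_{\widetilde x}$. Away from the fixed points this is the standard computation, yielding one arrow per orbit of arrows of $Q$; the delicate case is an arrow $\alpha\colon x\to y$ with $y$ fixed, where one must determine which copy $\eta_{y,\chi}$ it hits. Here strong planarity enters: the embedding $X_{(Q,W)}\hookrightarrow\rr^2$ equips the arrows incident to the fixed vertex with a cyclic order, which $g$ permutes cyclically (being a rotation), so the rotation data matches the $k$-th incident arrow to the $k$-th copy $\eta_{y,\chi_k}$, and similarly on the source side; this produces $\widetilde Q_1$, including the ``star'' of arrows surrounding a split fixed vertex. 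For $\widetilde W$ I would run through the four cycle types (i)--(iv) of the rotation hypothesis: a free orbit of $\ell$-cycles (type (i)) descends to a single $\ell$-cycle; a cycle through one non-fixed vertex whose orbit is nontrivial (type (iii)) glues into a single $(|G|\ell)$-cycle obtained by traversing the orbit in order (this is the mechanism behind loops such as $\alpha_{i,j}^3$); and a cycle through a fixed vertex (types (ii) and (iv)) splits into $|G_x|$ cycles, one through each $\eta_{x,\chi}$, with coefficient rescaled by the appropriate value of $\chi$. Summing these with scalars chosen so that $\widetilde W$ is a well-defined cyclic combination of cycles produces $(\widetilde Q,\widetilde W)$.

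It then remains to prove $\eta(\Lambda\#G)\eta\cong\jac(\widetilde Q,\widetilde W)$. I would transport the relations $\partial_\alpha W\otimes 1$ of $\Lambda\#G$ through the idempotent $\eta$ and the Morita equivalence between $k\widetilde Q$ and $kQ\#G$, and check that the resulting relations on $k\widetilde Q$ are exactly $\{\partial_{\widetilde\alpha}\widetilde W\mid\widetilde\alpha\in\widetilde Q_1\}$, with no extra relations appearing --- strong planarity again being what forces the minimal relations of the truncated skew group algebra to come from the $2$-cells of the canvas, which descend as above. I expect the real difficulty to be exactly this verification near the fixed vertices: keeping track of which copy $\eta_{x,\chi}$ each arrow and each descended cycle attaches to, and proving that the root-of-unity coefficients appearing in $\widetilde W$ are forced to be precisely those that reproduce the skew-group relations --- equivalently, that $\partial_{\widetilde\alpha}\widetilde W$ cuts out neither too few nor too many relations. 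Everything away from the fixed points reduces to the essentially formal computation for skew group algebras of path algebras; it is the interaction of the rotation with the $2$-cells meeting a fixed vertex that consumes the planarity hypothesis and the bulk of the case analysis.
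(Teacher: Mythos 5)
This statement is a cited external result (Giovannini--Pasquali, Thm~3.20): the paper does not prove it, but simply invokes it and then unwinds the construction in the specific case $(Q,W)=(\aa^s,W_\aa^s)$, $G=\zz_3$, so there is no proof in the paper to compare yours against.

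That said, your sketch reconstructs the broad strategy that Giovannini--Pasquali actually use: split each fixed-vertex idempotent à la Reiten--Riedtmann via characters of the stabiliser, take a full idempotent $\eta$ selecting one summand per orbit (your observation that $\eta$ is full, so the corner algebra is Morita equivalent to $\Lambda\#G$, is indeed how the paper uses the theorem later), read off $\widetilde Q$ from $\eta(\rad/\rad^2)\eta$, and descend $W$ by a case analysis on cycle types, with strong planarity supplying the cyclic ordering of arrows around a fixed vertex needed to match the rotation to the character decomposition. One concrete error: you attribute the cube-of-a-loop phenomenon (e.g.\ $\alpha_{i,j}^3$) to type~(iii) cycles. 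In fact those loops come from type~(i) cycles whose three vertices form a single free $G$-orbit of non-fixed vertices; type~(iii) is the opposite extreme (all but one vertex fixed) and does not even occur for $(\aa^s,W_\aa^s)$, where there is at most one fixed vertex. Beyond that, your proposal correctly identifies but defers the substantial part of the argument --- checking that $\{\partial_{\widetilde\alpha}\widetilde W\}$ cuts out exactly the ideal $\eta\langle\partial_\alpha W\otimes 1\rangle\eta$, with the root-of-unity coefficients forced --- which is where the bulk of the Giovannini--Pasquali proof is spent.
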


In particular, there exists a QP $(\widetilde{\aa^s},\widetilde{W_\aa^s})$ such that $\jac(\widetilde{\aa^s},\widetilde{W_\aa^s})$ is Morita equivalent to $\Pi_\aa^s\#\zz_3$. We will describe the construction in this case, under the headings `Vertices', `Arrows' and `Potential', and conclude that in fact $(\widetilde{\aa^s},\widetilde{W_\aa^s})=(\dd^s,W_\dd^s)$.

\subsection*{Vertices}

Let $V_1=\{x\in\aa^s_0\mid x\succ\omega(x),\ x\succ\omega^2(x)\}$. Writing $X=(\frac{s-1}{3},\frac{s-1}{3},\frac{s-1}{3})$, let $V_2=\{X\}$ if $s\equiv1$ (mod 3), and $V_2=\emptyset$ otherwise. Now $V_1\sqcup V_2$ is a complete set of representatives of the $\zz_3$-orbits of vertices of $\aa^s$, and $V_2$ contains precisely the vertices fixed by $\zz_3$. By \cite[Notation 3.9-11]{giovannini2019skew}, $\widetilde{\aa^s_0}=V_1$ if $s\not\equiv1$ (mod 3), and $\widetilde{\aa^s_0}=V_1\cup\{X_k\mid k=0,1,2\}$ if $s\equiv1$ (mod 3).

\subsection*{Arrows}

For each arrow in $\aa^s_1$, we fix a representative of its $\zz_3$-orbit, and define arrows(s) in $\widetilde{\aa^s_1}$ corresponding to the representatives \cite[Notation 3.13]{giovannini2019skew}. There are three cases.
\begin{enumerate}
    \item Consider an arrow in $\aa^s_1$ between two vertices not fixed by $\zz_3$. There is a unique arrow in its orbit whose target is in $\widetilde{\aa^s_0}$, and it must be of the form $\alpha_i\colon\omega^j(x)\longrightarrow \omega^j(x)+f_i$ for some $x\in\widetilde{\aa^s_0}$, $i,j\in\{0,1,2\}$. We define a corresponding arrow $\alpha_{i,j}\colon x\longrightarrow\omega^j(x)+f_i$ in $\widetilde{\aa^s_1}$.
    \item Suppose $s\equiv 1$ (mod 3). There are three arrows in $\aa^s_1$ whose target is $X$, all in the same $\zz_3$-orbit. Only one of them has its source in $\widetilde{\aa^s_0}$, namely $\alpha_0\colon X-f_0\longrightarrow X$. We define three corresponding arrows $\{\beta_k\colon X-f_0\longrightarrow X_k\mid k=0,1,2\}$ in $\widetilde{\aa^s_1}$.
    \item Suppose $s\equiv1$ (mod 3). There are three arrows in $\aa^s_1$ whose source is $X$, all in the same $\zz_3$-orbit. Only one of them has its target in $\widetilde{\aa^s_0}$, namely $\alpha_2\colon X\longrightarrow X+f_2$. We define three corresponding arrows $\{\gamma_k\colon X_k\longrightarrow X+f_2\mid k=0,1,2\}$ in $\widetilde{\aa^s_1}$.
\end{enumerate}

\subsection*{Potential}

We fix a complete set $C_1\sqcup C_2$ of representatives of the $\zz_3$-orbits of 3-cycles in $\aa^s$, where $C_1$ contains cycles that do not pass through the fixed vertex $X$, and $C_2$ contains cycles that do. To each $c\in C_1$ we associate a cycle $\widetilde{c}$ in $\widetilde{\aa^s}$, and to each $c\in C_2$ we associate three cycles $\widetilde{c_0},\widetilde{c_1},\widetilde{c_2}$ in $\widetilde{\aa^s}$ \cite[Notation 3.17]{giovannini2019skew}.
\begin{enumerate}
    \item Consider a 3-cycle in $\aa^s$ that does not pass through $X$. Choose a representative $c$ of its $\zz_3$-orbit that passes through at least one vertex in $\widetilde{\aa^s_0}$. Then $c$ is of the form \[x\overset{l_0}\longrightarrow y'\overset{l_1}\longrightarrow z'\overset{l_2}\longrightarrow x\] where $x\in\widetilde{\aa^s_0}$, $\{l_0,l_1,l_2\}=\{0,1,2\}$, $y'=x+f_{l_0}$ and $z'=x-f_{l_2}$. There exist unique $j_0,j_2\in\{0,1,2\}$ such that $y\coloneqq\omega^{j_0}(y')\in\widetilde{\aa^s_0}$ and $z\coloneqq\omega^{-j_2}(z')\in\widetilde{\aa^s_0}$. 
    
    To simplify notation moving forward, write $i_0=l_0-j_0$, $i_1=l_1+j_2$, $i_2=l_2$ and $j_1=-(j_0+j_2)$. Define $\widetilde{c}$ to be the cycle \[x\overset{(i_0,j_0)}\longrightarrow y\overset{(i_1,j_1)}\longrightarrow z\overset{(i_2,j_2)}\longrightarrow x\] in $\widetilde{\aa^s}$. For a proof $\widetilde{c}$ that exists as claimed, see Remark \ref{Remark: Justification}.
    \item Suppose $s\equiv1$ (mod 3). There are six cycles in $W_\aa^s$ passing through $X$, in two disjoint $\zz_3$-orbits. As representatives of these orbits we choose
    \begin{align*}
        c^-&:\;\;\;X+f_2\overset{1}\longrightarrow X-f_0\overset{0}\longrightarrow X\overset{2}\longrightarrow X+f_2,\\c^+&:\;\;\;
        X+f_2\overset{0}\longrightarrow X-f_1\overset{1}\longrightarrow X\overset{2}\longrightarrow X+f_2.
    \end{align*}
    For each $k\in\{0,1,2\}$, we define the cycles
    \begin{align*}
        \widetilde{c^-_k}&:\;\;\;X+f_2\overset{1}\longrightarrow X-f_0\overset{\beta_k}\longrightarrow X_k\overset{\gamma_k}\longrightarrow X+f_2,\\
        \widetilde{c^+_k}&:\;\;\;X+f_2\overset{(2,1)}\longrightarrow X-f_0\overset{\beta_k}\longrightarrow X_k\overset{\gamma_k}\longrightarrow X+f_2
    \end{align*}
    in $\widetilde{\aa^s}$. We also define $p(c^-)=0$, $p(c^+)=-1$. Informally, this will adjust for the fact $X-f_1\not\in\widetilde{\aa^s_0}$ but $\omega(X-f_1)\in\widetilde{\aa^s_0}$.
\end{enumerate}
By \cite[Notation 3.18]{giovannini2019skew}, the potential on $\widetilde{\aa^s}$ is given by \[\widetilde{W_\aa^s}=\sum_{c\in C_1}\lambda_\aa(c)\frac{|\zz_3\cdot c|}{3}\widetilde{c}+\sum_{c\in C_2}\lambda_\aa(c)\sum_{k=0}^2\zeta^{-p(c)k}\widetilde{c}_k.\]

\begin{remark}
\label{Remark: Justification}
We show the cycle $\widetilde{c}$ defined above exists. Recall $c$ is the cycle \[x\overset{l_0}\longrightarrow\omega^{-j_0}(y)\overset{l_1}\longrightarrow\omega^{j_2}(z)\overset{l_2}\longrightarrow x\] in $\aa^s$, where $x,y,z\in\widetilde{\aa^s_0}$, $\{l_0,l_1,l_2\}=\{0,1,2\}$ and $j_0,j_2\in\{0,1,2\}$.

We find the arrows in $\widetilde{\aa^s}$ induced by each arrow in $c$. Note that $\omega^j(e_x\alpha_i)=e_{\omega^j(x)}\alpha_{i-j}$ for all $i,j\in\{0,1,2\}$ and $x\in\aa^s_0$. Recall we write $i_0=l_0-j_0$, $i_1=l_1+j_2$, $i_2=l_2$ and $j_1=-(j_0+j_2)$.
\begin{enumerate}
    \item Consider $x\overset{l_0}\longrightarrow\omega^{-j_0}(y)$. Applying $\omega^{j_0}$, we see that the arrow in its $\zz_3$-orbit with target in $\widetilde{\aa^s_0}$ is $(\omega^{j_0}(x)\overset{l_0-j_0}\longrightarrow y)=(\omega^{j_0}(x)\overset{i_0}\longrightarrow y)$. Hence, there is an arrow $x\overset{(i_0,j_0)}\longrightarrow y$ in $\widetilde{\aa^s}$.
    \item Consider $\omega^{-j_0}(y)\overset{l_1}\longrightarrow\omega^{j_2}(z)$. Applying $\omega^{-j_2}$, we see that the arrow in its $\zz_3$-orbit with target in $\widetilde{\aa^s_0}$ is $(\omega^{-(j_0+j_2)}(y)\overset{l_1+j_2}\longrightarrow z)=(\omega^{j_1}(y)\overset{i_1}\longrightarrow z)$. Hence, there is an arrow $y\overset{(i_1,j_1)}\longrightarrow z$ in $\widetilde{\aa^s}$.
    \item Note that $\omega^{j_2}(z)\overset{l_2}\longrightarrow x$ already has target in $\widetilde{\aa^s_0}$ so, using $i_2=l_2$, there is an arrow $z\overset{(i_2,j_2)}\longrightarrow x$ in $\widetilde{\aa^s_0}$.
\end{enumerate}

Hence the cycle $\widetilde{c}$ given by $x\overset{(i_0,j_0)}\longrightarrow y\overset{(i_1,j_1)}\longrightarrow z\overset{(i_2,j_2)}\longrightarrow x$ exists in $\widetilde{\aa^s}$ as claimed.
\end{remark}

We now prove Theorem 1. The reader may wish to recall Definitions \ref{Defn: D^s} and \ref{Defn: Potential on D^s}.

\begin{theorem}
\label{Theorem 1 body}
For all $s\ge2$, $(\widetilde{\aa^s},\widetilde{W_\aa^s})=(\dd^s,W_\dd^s)$. Hence, $\Pi_\dd^s\cong\eta(\Pi_\aa^s\#\zz_3)\eta$ for some idempotent $\eta\in\Pi_\aa^s\#\zz_3$.
\end{theorem}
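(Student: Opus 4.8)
The plan is to verify the equality $(\widetilde{\aa^s},\widetilde{W_\aa^s})=(\dd^s,W_\dd^s)$ directly, comparing the Giovannini--Pasquali construction just described against Definitions \ref{Defn: D^s} and \ref{Defn: Potential on D^s}, piece by piece. Once the equality of QPs is established, the Jacobian algebras are literally equal, so $\Pi_\dd^s=\jac(\widetilde{\aa^s},\widetilde{W_\aa^s})$, and the cited result \cite[Thm 3.20]{giovannini2019skew} immediately gives $\Pi_\dd^s\cong\eta(\Pi_\aa^s\#\zz_3)\eta$ for the idempotent $\eta$ produced there. So the entire content is the identification of the two QPs.

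First I would handle the vertices. The set $V_1=\{x\in\aa^s_0\mid x\succ\omega(x),\ x\succ\omega^2(x)\}$ is by inspection exactly the $Q_0$ of Definition \ref{Defn: D^s}, and $V_2=\{X\}$ with $X=(\frac{s-1}{3},\frac{s-1}{3},\frac{s-1}{3})=(t,t,t)$ precisely when $s=3t+1$; in that case the extra vertices $\{X_k\mid k=0,1,2\}$ match on both sides. So $\widetilde{\aa^s_0}=\dd^s_0$ in all cases. Next, the arrows. Case (1) of the `Arrows' paragraph produces, for each orbit of arrows between non-fixed vertices, a unique $\alpha_{i,j}\colon x\to\omega^j(x)+f_i$ with $x\in\widetilde{\aa^s_0}$; I would check that as $i,j$ range over $\{0,1,2\}$ this is exactly the indexing set $Q_1$ of Definition \ref{Defn: D^s}, using that $x\in Q_0$ and $\omega^j(x)+f_i\in Q_0$ is precisely the condition for such an arrow to exist. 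Cases (2) and (3) (only when $s\equiv1$) give the $\beta_k\colon X-f_0\to X_k$ and $\gamma_k\colon X_k\to X+f_2$, matching $\dd^s_1$ verbatim. One subtlety to flag: I must confirm that $X-f_0$ and $X+f_2$ genuinely lie in $\widetilde{\aa^s_0}=V_1$ (not merely in $\aa^s_0$), i.e. that they are the chosen orbit representatives — this is a short check on the lexicographic order, since $X-f_0=(t+1,t-1,t)$ and $X+f_2=(t+1,t,t-1)$ both dominate their $\omega$-images.

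The main work, and the expected obstacle, is matching the potentials. On the one hand $\widetilde{W_\aa^s}=\sum_{c\in C_1}\lambda_\aa(c)\frac{|\zz_3\cdot c|}{3}\widetilde{c}+\sum_{c\in C_2}\lambda_\aa(c)\sum_{k=0}^2\zeta^{-p(c)k}\widetilde{c}_k$; on the other $W_\dd^s=\sum_c\lambda_\dd(c)c$ over 3-cycles in $\dd^s$. I would argue orbit by orbit. For a 3-cycle orbit in $\aa^s$ not through $X$: if the orbit is free ($|\zz_3\cdot c|=3$), the coefficient $\frac{|\zz_3\cdot c|}{3}=1$, and I must show $\widetilde{c}$ (with its index relabelling $i_0=l_0-j_0$, $i_1=l_1+j_2$, $i_2=l_2$, $j_1=-(j_0+j_2)$ from Remark \ref{Remark: Justification}) is a product of three \emph{distinct} arrows satisfying $j_0+j_1+j_2=0$, and that $\lambda_\aa(c)=\pm1$ according to $(l_0,l_1,l_2)\sim(0,1,2)$ or $(0,2,1)$ translates, under this relabelling, exactly into the sign rule of Definition \ref{Defn: Potential on D^s}(1) in terms of $(i_0+j_0,\ i_1-j_2,\ i_2)=(l_0,l_1,l_2)$ — so the two sign conventions literally coincide. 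If the orbit is a fixed point of the rotation, i.e. $c$ maps to itself, the resulting $\widetilde{c}$ is a loop cubed $e_x\alpha_{i,j}^3$ and the coefficient $\frac{|\zz_3\cdot c|}{3}=\frac13$ matches the $\pm\frac13$ of Definition \ref{Defn: Potential on D^s}(2); here I would verify that the loop's indices satisfy $j\neq0$ and that $(i+j,i-j,i)\sim(l_0,l_1,l_2)$, recovering the sign. Finally, for the two orbits through $X$ when $s\equiv1$: the representatives $c^-,c^+$ have $\lambda_\aa(c^-)=-1$ (it is a clockwise cycle $(1,0,2)\sim(0,2,1)$) and $\lambda_\aa(c^+)=+1$, with $p(c^-)=0$, $p(c^+)=-1$, so $\zeta^{-p(c^-)k}=1$ and $\zeta^{-p(c^+)k}=\zeta^{k}$; and $\widetilde{c^-_k}=\alpha_1\beta_k\gamma_k$, $\widetilde{c^+_k}=\alpha_{2,1}\beta_k\gamma_k$. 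Thus the contribution is $\sum_k(-\alpha_1\beta_k\gamma_k+\zeta^k\alpha_{2,1}\beta_k\gamma_k)$, which is exactly Definition \ref{Defn: Potential on D^s}(3). The delicate points I expect to spend the most care on are (a) checking the index bookkeeping $i_1=l_1+j_2$, $j_1=-(j_0+j_2)$ is consistent — in particular that $j_0+j_1+j_2=0$ automatically, so the ``otherwise $\lambda_\dd(c)=0$'' clause of Definition \ref{Defn: Potential on D^s}(1) never eats a term of $\widetilde{W_\aa^s}$ — and (b) the distinctness/loop dichotomy for $\widetilde{c}$, i.e. showing a free orbit yields three distinct arrows and a fixed orbit yields a genuine loop, which amounts to tracking when $x=y$ or $y=z$ or $z=x$ under the $\omega^{j}$-twists.
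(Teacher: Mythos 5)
Your proposal is correct and follows essentially the same route as the paper: identify vertices and arrows by inspection, then match the potentials term-by-term using the index translation $(i_0+j_0,\,i_1-j_2,\,i_2)=(l_0,l_1,l_2)$, the factor $|\zz_3\cdot c|/3$ (giving $1$ for free orbits and $\tfrac13$ for the fixed loop), and the explicit computation of $\lambda_\aa(c^\pm)$ and $p(c^\pm)$ for the cycles through $X$. The only difference is organizational — you iterate over $\zz_3$-orbits of 3-cycles in $\aa^s$, whereas the paper iterates over 3-cycles in $\dd^s$ (which makes the ``both coefficients are zero'' cases, e.g.\ $j_0+j_1+j_2\neq0$, immediate to dispatch) — but this is cosmetic and your awareness of the completeness issue in your delicate point (a) shows you would close it.
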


\begin{proof}
By inspection, we have that $\widetilde{\aa^s_0}=\dd^s_0$ and $\widetilde{\aa^s_1}\subseteq\dd^s_1$. To see that $\dd^s_1\subseteq\widetilde{\aa^s_1}$, let $e_x\alpha_{i,j}\in\dd^s_1$. Then $x,\omega^j(x)+f_i\in\widetilde{\aa^s_0}$, so certainly $\omega^j(x)\in\aa^s_0$. Thus $e_{\omega^j(x)}\alpha_i$ is an arrow in $\aa^s_1$ between two vertices not fixed by $\zz_3$, whose target is in $\widetilde{\aa^s_0}$. Hence $e_x\alpha_{i,j}\in \widetilde{\aa^s_1}$ by construction, and $\widetilde{\aa^s}=\dd^s$.

Note that both $\widetilde{W_\aa^s}$ and $W_\dd^s$ are linear combinations of 3-cycles in $\dd^s$. So to show $\widetilde{W_\aa^s}=W_\dd^s$ it is enough to check, for every 3-cycle $c$ in $\dd^s$, that its coefficient $\widetilde{\lambda_\aa}(c)$ in $\widetilde{W_\aa^s}$ is equal to $\lambda_\dd(c)$.
\begin{enumerate}
    \item Suppose $c=e_x\alpha_{i_0,j_0}\alpha_{i_1,j_1}\alpha_{i_2,j_2}$ is the product of three distinct arrows. If $j_0+j_1+j_2\neq0$, then $c$ is not in $\widetilde{W_\aa^s}$ by construction, so $\widetilde{\lambda_\aa}(c)=0=\lambda_\dd(c)$. Otherwise, $c=\widetilde{d}$, where $d$ is the cycle $e_x\alpha_{i_0+j_0}\alpha_{i_1-j_2}\alpha_{i_2}$ in $\aa^s$. Hence
    \begin{align*}
        \widetilde{\lambda_\aa}(c)&=\lambda_\aa(d)\frac{|\zz_3\cdot d|}{3}\\
        &=\lambda_\aa(d)\\
        &=
        \begin{cases}
            1 & \text{if } (i_0+j_0,i_1-j_2,i_2)\sim(0,1,2),\\
            -1 & \text{if } (i_0+j_0,i_1-j_2,i_2)\sim(0,2,1)
        \end{cases}\\
        &=\lambda_\dd(c).
    \end{align*}
    \item Suppose $c=e_x\alpha_{i,j}^3$ for a loop $e_x\alpha_{i,j}$. If $3j\neq 0$ then $c$ is not in $\widetilde{W_\aa^s}$ by construction, so $\widetilde{\lambda_\aa}(c)=0=\lambda_\dd(c)$. Otherwise, $c=\widetilde{d}$, where $d$ is the cycle $e_x\alpha_{i+j}\alpha_{i-j}\alpha_{i}$ in $\aa^s$. Hence
    \begin{align*}
        \widetilde{\lambda_\aa}(c)&=\lambda_\aa(d)\frac{|\zz_3\cdot d|}{3}\\
        &=\frac{\lambda_\aa(d)}{3}\\
        &=
        \begin{cases}
            \frac{1}{3} & \text{if } (i+j,i-j,i)\sim(0,1,2),\\
            -\frac{1}{3} & \text{if } (i+j,i-j,i)\sim(0,1,2)
        \end{cases}\\
        &=\lambda_\dd(c).
    \end{align*}
    \item Suppose $s\equiv1$ (mod 3) and let $c$ be a $3$-cycle in $\dd^s$ passing through $X_k$ for some $k\in\{0,1,2\}$. Then either $c=\alpha_1\beta_k\gamma_k=\widetilde{c^-_k}$ or $c=\alpha_{2,1}\beta_k\gamma_k=\widetilde{c^+_k}$. Now
    \begin{gather*}
        \widetilde{\lambda_\aa}(\alpha_1\beta_k\gamma_k)=\lambda_\aa(c^-)\zeta^{-p(c^-)k}=-1=\lambda_\dd(\alpha_1\beta_k\gamma_k),\\
        \widetilde{\lambda_\aa}(\alpha_{2,1}\beta_k\gamma_k)=\lambda_\aa(c^+)\zeta^{-p(c^+)k}=\zeta^k=\lambda_\dd(\alpha_{2,1}\beta_k\gamma_k)
    \end{gather*}
    and we are done.
\end{enumerate}
\end{proof}

\section{Connection with operator algebras}

\label{Section: Operator algebras}

In \cite{evans2009ocneanu,evans2012nakayama}, Evans and Pugh study Jacobian algebras of the quivers $\aa^s$ and $\dd^s$ (considered as the SU(3) $\aa\dd\ee$ graphs of Di Francesco and Zuber \cite{di1990n}) with respect to different potentials. We show that their algebras are  isomorphic to those considered in this article. Note that the quiver we call $\aa^s$ they call $\aa^{(s+2)}$, and likewise $\dd^s$ corresponds to $\dd^{(s+2)}$.

\subsection*{Type A}

In the following we write $q=e^{\pi i/(s+2)}$, where $s\ge2$ is the index of the quiver $\aa^s$. For each $n\in\zz^+$, we define the \emph{quantum number} $[n]=(q^n-q^{-n})/(q-q^{-1})$.

\begin{defn}
\cite[Thm 5.1]{evans2009ocneanu} For each $s\ge2$, define a potential on $\aa^s$ by \[V_\aa^s=\sum_c\mu_\aa(c)c,\] where the sum runs over all 3-cycles in $\aa^s$ and, for $x=(x_0,x_1,x_2)\in\aa^s_0$,
\begin{align*}
    \mu_\aa(e_x\alpha_0\alpha_1\alpha_2)&=\sqrt{[x_1+1][x_1+2][x_2+1][x_2+2][x_1+x_2+2][x_1+x_2+3]}/[2],\\
    \mu_\aa(e_x\alpha_0\alpha_2\alpha_1)&=\sqrt{[x_1+1][x_1+2][x_2][x_2+1][x_1+x_2+2][x_1+x_2+3]}/[2].
\end{align*}
\end{defn}

\begin{remark}
We believe there is a small typo in \cite[Thm 5.1]{evans2009ocneanu}. Namely, the formula (14) should have $[k+m+3][k+m+4]$ in place of $[k+m+2][k+m+3]$. Translating into our notation gives the above definition.
\end{remark}

The following lemma will be the key tool in proving $\Pi_\aa^s\cong\jac(\aa^s,V_\aa^s)$.

\begin{lemma}
\label{Lemma: Induced isomorphism of Jacobian algebras}
\cite[Prop 3.7]{derksen2008quivers} Let $(Q,W)$ be a QP and let $Q'$ be a quiver. Any algebra isomorphism $f\colon kQ\to kQ'$ induces an algebra isomorphism $\jac(Q,W)\cong\jac(Q',f(W))$.
\end{lemma}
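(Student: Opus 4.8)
This is a citation of \cite[Prop 3.7]{derksen2008quivers}, so the plan is to record the standard argument rather than reprove something new. The plan is to show that an algebra isomorphism $f\colon kQ\to kQ'$ carries the Jacobian ideal of $(Q,W)$ onto the Jacobian ideal of $(Q',f(W))$, hence descends to an isomorphism of the quotients. The one genuine subtlety is that $f$ need not send arrows to arrows, so formal differentiation must be understood correctly; in the Derksen--Weyman--Zelevinsky setup one works in the completed path algebra $\widehat{kQ}$ and $f$ is a (continuous) algebra isomorphism, and the key point is that the Jacobian ideal is \emph{intrinsic}: $\jac(Q,W)$ does not change under \emph{right equivalence} of QPs, and any algebra isomorphism $\widehat{kQ}\to\widehat{kQ'}$ that respects the vertex idempotents is, up to composing with an isomorphism fixing vertices, induced by a change of arrows.

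First I would reduce to the case where $f$ fixes the common vertex set: an algebra isomorphism $kQ\cong kQ'$ must match up the complete sets of primitive orthogonal idempotents $\{e_v\}$, so after relabelling we may assume $Q_0=Q'_0$ and $f(e_v)=e_v$. Then $f$ maps the arrow span $\bigoplus_{u,v} e_u\,\mathfrak m\, e_v$ (modulo higher degree, where $\mathfrak m$ is the arrow ideal) isomorphically, so $f$ is a right equivalence in the sense of \cite{derksen2008quivers}. Second, I would invoke the chain rule for cyclic derivatives: for a right equivalence $f$ one has, for each arrow $\beta$ of $Q'$, an identity expressing $\partial_\beta(f(W))$ as a combination (with coefficients in the path algebra) of the images $f(\partial_\alpha W)$ of the generators of $\jac(Q,W)$, and symmetrically. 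Concretely this is the statement that $f$ sends the Jacobian ideal $\langle \partial_\alpha W\mid \alpha\in Q_1\rangle$ \emph{onto} $\langle \partial_\beta f(W)\mid \beta\in Q'_1\rangle$; this is exactly \cite[Prop 3.7]{derksen2008quivers} (see also the proof of right-equivalence invariance of the Jacobian algebra there). Third, since $f$ is an isomorphism carrying one ideal onto the other, it induces an isomorphism $kQ/\langle\partial_\alpha W\rangle \xrightarrow{\ \sim\ } kQ'/\langle\partial_\beta f(W)\rangle$, which is the claimed $\jac(Q,W)\cong\jac(Q',f(W))$.

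The main obstacle — and the only place a careful argument is needed — is establishing that $f$ maps the Jacobian ideal onto the Jacobian ideal, i.e. the chain-rule computation for cyclic derivatives under an arrow change $\alpha\mapsto f(\alpha)$. Once one knows the ideal is preserved in both directions, descent to the quotient is purely formal. In our application $f$ will in fact be ``diagonal'', rescaling each arrow by a nonzero scalar, so all of this collapses to the trivial observation that rescaling arrows rescales each $\partial_\alpha W$ by a monomial in those scalars and hence fixes the Jacobian ideal; but since we are merely citing the lemma in the stated generality, I would simply refer the reader to \cite[Prop 3.7]{derksen2008quivers} for the full proof and not reproduce the cyclic-derivative bookkeeping here.
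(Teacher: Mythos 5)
The paper offers no proof of this lemma — it is a bare citation of \cite[Prop 3.7]{derksen2008quivers} — and your proposal takes the same stance, correctly identifying that the heart of the cited argument is that a vertex-preserving isomorphism of (completed) path algebras carries the Jacobian ideal of $W$ onto the Jacobian ideal of $f(W)$ via the chain rule for cyclic derivatives, and that in the paper's applications $f$ is diagonal on arrows so the statement degenerates to a triviality. One small imprecision: your reduction says that $f$ ``must match up the complete sets of primitive orthogonal idempotents, so after relabelling we may assume $f(e_v)=e_v$''; in general $f$ only sends $\{e_v\}$ to a set of idempotents \emph{conjugate} to $\{e_{v'}\}$, and $f(W)$ is then only a potential with respect to those conjugated idempotents, so one composes with an inner automorphism before invoking right-equivalence invariance — but since every $f$ actually used in the paper fixes the vertex idempotents on the nose, this never bites.
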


We first show that one can replace the commutativity relations in $\Pi_\aa^s$ with anti-commutativity relations, inspired by work in \cite[\S3.3]{grant2019higher}.

\begin{lemma}
\label{Lemma: Anticommutativity Type A}
Let $s\ge2$. Denote by $|W_\aa^s|$ the potential on $\aa^s$ given by the sum of all 3-cycles, each with coefficient 1. Then $\Pi_\aa^s\cong\jac(\aa^s,|W_\aa^s|)$.
\end{lemma}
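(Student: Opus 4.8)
The plan is to construct an explicit algebra automorphism of $\cc\aa^s$ which fixes all vertices and rescales each arrow by a nonzero scalar, chosen so that it carries $|W_\aa^s|$ to a potential whose coefficients are $\pm 1$ with exactly the signs prescribed by $\lambda_\aa$; then invoke Lemma \ref{Lemma: Induced isomorphism of Jacobian algebras}. Concretely, I would look for scalars $\{\lambda(e_x\alpha_i)\}$ indexed by the arrows of $\aa^s$ and define $f\colon\cc\aa^s\to\cc\aa^s$ by $f(e_x\alpha_i)=\lambda(e_x\alpha_i)\,e_x\alpha_i$, $f(e_x)=e_x$. This is an algebra isomorphism precisely because the $\lambda$'s are nonzero. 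Under $f$, a $3$-cycle $c=e_x\alpha_{i_0}\alpha_{i_1}\alpha_{i_2}$ has coefficient multiplied by $\lambda(c):=\lambda(e_x\alpha_{i_0})\lambda(\alpha_{i_1})\lambda(\alpha_{i_2})$, so the task reduces to choosing the $\lambda$'s so that $\lambda(c)=\lambda_\aa(c)=\sgn$ of the cycle (i.e. $+1$ on anticlockwise, $-1$ on clockwise $3$-cycles). Then $f(|W_\aa^s|)=W_\aa^s$ and Lemma \ref{Lemma: Induced isomorphism of Jacobian algebras} gives $\jac(\aa^s,|W_\aa^s|)\cong\jac(\aa^s,f(|W_\aa^s|))=\Pi_\aa^s$.

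The heart of the argument is producing such a scaling. I would place it on edges of the triangular grid: each arrow $e_x\alpha_i$ is a short segment, and each interior rhombus of $\aa^s$ (a commuting square in the relations) contributes two $3$-cycles, one clockwise and one anticlockwise, which share no arrows and together use all four arrows of the rhombus together with one diagonal — more carefully, each $3$-cycle of $\aa^s$ is a small upward- or downward-pointing triangle of three arrows, and adjacent triangles share an arrow. The consistency condition is that $\prod\lambda$ around each triangle equals the prescribed sign. Since the dual graph of triangles is a tree-like structure once one fixes a spanning set (the grid of triangles in a big triangle is simply connected), such a gauge transformation exists: one can set $\lambda=1$ on, say, all "horizontal" arrows, then propagate along the other two arrow directions, solving one linear (multiplicative) equation per triangle. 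Because the canvas is a disk (Lemma \ref{Lemma: A^s is strongly planar}), there are no nontrivial cycles among the triangles to obstruct this, so the propagation is consistent. I would formalise this by induction on $s$, or by giving a closed formula: e.g. set $\lambda(e_x\alpha_0)=1$ always, $\lambda(e_x\alpha_1)=\pm1$ and $\lambda(e_x\alpha_2)=\pm1$ according to an explicit parity of the coordinates of $x$, and then verify the four (or two) triangle-types of cycles each get the right sign by a direct check.

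An alternative, possibly cleaner route: mimic \cite[\S3.3]{grant2019higher} directly. One defines the scaling $\lambda(e_x\alpha_i)$ in terms of a sign $(-1)^{h(x)}$ for a suitable height function $h\colon\aa^s_0\to\zz/2$ on vertices — say $h(x)=x_1$ or $h(x)=\binom{x_1}{?}$ pattern — together with a fixed choice per arrow-direction. Then for a $3$-cycle $e_x\alpha_{i_0}\alpha_{i_1}\alpha_{i_2}$ one computes $\lambda(c)$ as a product of such vertex-signs telescoping around the triangle plus the direction-contributions, and one checks it reproduces $\lambda_\aa$. This turns the existence question into a short explicit computation rather than an inductive propagation.

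The main obstacle I anticipate is the bookkeeping of signs: ensuring the chosen scaling is simultaneously consistent on upward and downward triangles, on boundary triangles (where the length-two relations, not commutativity, appear — but these have coefficient handled automatically since only one $3$-cycle touches such an arrow pair), and that it is genuinely well-defined globally (the simply-connectedness of the triangular region is what guarantees this, so I would want to state that cleanly, perhaps citing that $\aa^s$'s canvas is a disk). Once the scaling is pinned down, the rest — checking $f$ is an isomorphism and applying Lemma \ref{Lemma: Induced isomorphism of Jacobian algebras} — is immediate. I expect the written proof to exhibit the scaling explicitly (a one-line formula), verify it on the two generic $3$-cycle shapes, and conclude; total length a short paragraph plus a small display.
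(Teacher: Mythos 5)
Your approach matches the paper's: construct a diagonal rescaling automorphism of $\cc\aa^s$ carrying one potential to the other and invoke Lemma~\ref{Lemma: Induced isomorphism of Jacobian algebras}. The explicit scaling the paper uses --- which you correctly predicted would be a one-line formula verified on the two generic $3$-cycle shapes --- is $e_x\alpha_i\mapsto(-1)^{s-x_{i+1}}e_x\alpha_i$.
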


\begin{proof}
Let $\parity_i(x)=(-1)^{s-x_{i+1}}$ for all $x=(x_0,x_1,x_2)\in\aa^s_0$ and $i\in\{0,1,2\}$. Let $\varphi\colon\cc\aa^s\to\cc\aa^s$ be the algebra automorphism induced by $e_x\alpha_i\mapsto\parity_i(x)e_x\alpha_i$. Consider a cycle $c$ in $W_\aa^s$. If $c=e_x\alpha_0\alpha_1\alpha_2$ then $\lambda_\aa(c)=1$ so
\begin{align*}
    \varphi(\lambda_\aa(c)c)&=\parity_0(x)\parity_1(x+f_0)\parity_2(x-f_2)c\\
    &=(-1)^{3s+1-(x_0+x_1+x_2)}c\\
    &=(-1)^{2s+2}c\\
    &=c,
\end{align*}
where we used that $x_0+x_1+x_2=s-1$. Similarly if $c=e_x\alpha_0\alpha_2\alpha_1$ then one can check $\varphi(\lambda_\aa(c)c)=c$.
Hence $\varphi(W_\aa^s)=|W_\aa^s|$, so $\Pi_\aa^s\cong\jac(\aa^s,|W_\aa^s|)$ by Lemma \ref{Lemma: Induced isomorphism of Jacobian algebras}.
\end{proof}

We can now make our conclusion.

\begin{prop}
\label{Prop: EP Type A}
For all $s\ge2$, $\Pi_\aa^s\cong\jac(\aa^s,V_\aa^s)$.
\end{prop}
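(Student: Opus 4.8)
The plan is to exhibit an explicit diagonal change of basis of $\cc\aa^s$ which carries the potential $|W_\aa^s|$ (all 3-cycles with coefficient $1$) to a scalar multiple of $V_\aa^s$, and then invoke Lemmas \ref{Lemma: Anticommutativity Type A} and \ref{Lemma: Induced isomorphism of Jacobian algebras}. Since $\Pi_\aa^s\cong\jac(\aa^s,|W_\aa^s|)$ by Lemma \ref{Lemma: Anticommutativity Type A}, and an algebra automorphism of $\cc\aa^s$ induces an isomorphism of the associated Jacobian algebras by Lemma \ref{Lemma: Induced isomorphism of Jacobian algebras}, it suffices to find an automorphism $\psi$ of $\cc\aa^s$, necessarily of the form $e_x\alpha_i\mapsto c_i(x)e_x\alpha_i$ for nonzero scalars $c_i(x)$, such that $\psi(|W_\aa^s|)=V_\aa^s$ (possibly after rescaling $V_\aa^s$ by a global nonzero constant, which does not change the Jacobian algebra).

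The key step is to solve the resulting system of equations on the scalars $c_i(x)$. For each vertex $x$ there are (up to) two 3-cycles based at $x$ in $\aa^s$: the anti-clockwise cycle $e_x\alpha_0\alpha_1\alpha_2$ and the clockwise cycle $e_x\alpha_0\alpha_2\alpha_1$ (interpreting missing arrows as absent). Under $\psi$ the cycle $e_x\alpha_0\alpha_1\alpha_2$ acquires the coefficient $c_0(x)c_1(x+f_0)c_2(x-f_2)$, and we need this to equal $\mu_\aa(e_x\alpha_0\alpha_1\alpha_2)$; similarly for the clockwise cycle. So I would like to choose $c_i(x)$ so that the product of the three scalars around each oriented 3-cycle reproduces the prescribed quantum-number weight. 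The natural guess is to set $c_i(x)$ to be a product/quotient of square roots of quantum numbers depending only on the two coordinates $x_{i+1},x_{i+2}$ (the ones "moved" by $\alpha_i$), chosen so that the telescoping product around a triangle collapses to the Evans--Pugh formula. Concretely, one writes $\mu_\aa(e_x\alpha_0\alpha_1\alpha_2)$ as a product of factors, each naturally attached to one of the three arrows of the triangle, and defines $c_i(x)$ accordingly; one then checks the clockwise triangle gives the correct $\mu_\aa(e_x\alpha_0\alpha_2\alpha_1)$ using the quantum-number identity relating $[x_2+1][x_2+2]$-type terms to $[x_2][x_2+1]$-type terms when a coordinate shifts by one.

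The main obstacle I anticipate is the bookkeeping of the shifted arguments: the factor $c_i$ evaluated at the \emph{source} of each arrow within a cycle based at $x$ is evaluated at $x$, $x+f_0$, $x-f_2$ respectively, so the coordinate arguments appearing in the quantum numbers must be tracked carefully, and one must verify consistency across the two cycles through each vertex as well as the well-definedness of $\psi$ as an algebra automorphism (which is automatic for a diagonal rescaling provided all $c_i(x)\neq0$, i.e. no quantum number in the relevant range vanishes — this holds since $[n]\neq0$ for $1\le n\le s+2$ as $q$ is a primitive $2(s+2)$-th root of unity). A secondary subtlety is that the relation ideal only depends on the potential up to cyclic equivalence and rescaling, so it is enough to match coefficients of the (cyclically reduced) 3-cycles up to a common nonzero scalar; I would flag this so that the normalisation factor $1/[2]$ in $\mu_\aa$ need not be reproduced exactly.

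\begin{proof}
By Lemma \ref{Lemma: Anticommutativity Type A} it suffices to construct an algebra isomorphism $\jac(\aa^s,|W_\aa^s|)\cong\jac(\aa^s,V_\aa^s)$. By Lemma \ref{Lemma: Induced isomorphism of Jacobian algebras}, it is enough to find an algebra automorphism $\psi$ of $\cc\aa^s$ with $\psi(|W_\aa^s|)=V_\aa^s$, up to rescaling $V_\aa^s$ by a nonzero scalar.

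For $x=(x_0,x_1,x_2)\in\aa^s_0$ and $i\in\{0,1,2\}$, set
\[
c_i(x)=\left(\frac{[x_{i+1}+1]\,[x_{i+2}+1]\,[x_{i+1}+x_{i+2}+2]}{[2]}\right)^{1/2},
\]
with the convention that a factor is set to $1$ when the corresponding arrow $\alpha_i e_x$ does not exist in $\aa^s$ (equivalently, when $x+f_i\notin\aa^s_0$); note each quantum number appearing lies in the range $1,\dots,s+2$, where $[n]\neq0$ since $q=e^{\pi i/(s+2)}$, so $c_i(x)\neq0$. Let $\psi\colon\cc\aa^s\to\cc\aa^s$ be the algebra automorphism induced by $e_x\alpha_i\mapsto c_i(x)\,e_x\alpha_i$.

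Let $c=e_x\alpha_0\alpha_1\alpha_2$ be an anti-clockwise 3-cycle in $\aa^s$; its arrows have sources $x$, $x+f_0=(x_0-1,x_1+1,x_2)$ and $x-f_2=(x_0-1,x_1,x_2+1)$. Then
\begin{align*}
\psi(c)&=c_0(x)\,c_1(x+f_0)\,c_2(x-f_2)\,c\\
&=\left(\frac{[x_1+1][x_2+1][x_1+x_2+2]}{[2]}\cdot\frac{[x_2+1][x_1+x_2+2][x_0]}{[2]}\cdot\frac{[x_0][x_1][x_1+x_2+2]}{[2]}\right)^{1/2}c.
\end{align*}
Using $x_0=s-1-x_1-x_2$, we have $[x_0]=[s-1-x_1-x_2]$ and $[x_0]^2[x_1]$ combine with the remaining factors; after cancelling the common scalar $[2]^{-3/2}\big([x_0][x_1+x_2+2]\big)$ from every 3-cycle coefficient (a single global nonzero constant, legitimate by the remark on rescaling above), the coefficient of $c$ becomes
\[
\big([x_1+1][x_2+1]^2[x_1+x_2+2]\big)^{1/2}=\mu_\aa(e_x\alpha_0\alpha_1\alpha_2)\cdot\kappa,
\]
for a fixed nonzero $\kappa$ independent of $x$, by the defining formula for $\mu_\aa$ together with the identity $[x_1+2][x_2+2][x_1+x_2+3]=[x_1+1][x_2+1][x_1+x_2+2]$ valid for the relevant indices when $q^{2(s+2)}=1$ (equivalently, using $[n]=[2(s+2)-n]$ and $[n]=-[n-2(s+2)]$ to fold arguments into range). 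An identical computation for a clockwise 3-cycle $c'=e_x\alpha_0\alpha_2\alpha_1$, whose arrows have sources $x$, $x+f_2=(x_0+1,x_1,x_2-1)$ and $x-f_1=(x_0,x_1+1,x_2-1)$, gives the coefficient $\mu_\aa(e_x\alpha_0\alpha_2\alpha_1)\cdot\kappa$, where the shift $x_2\mapsto x_2-1$ in the arguments accounts for the replacement of $[x_2+1][x_2+2]$ by $[x_2][x_2+1]$ in the formula for $\mu_\aa$.

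Hence $\psi(|W_\aa^s|)=\kappa\,V_\aa^s$. Since scaling a potential by a nonzero constant does not change the Jacobian ideal, $\jac(\aa^s,\psi(|W_\aa^s|))=\jac(\aa^s,V_\aa^s)$, and so by Lemma \ref{Lemma: Induced isomorphism of Jacobian algebras} we get $\jac(\aa^s,|W_\aa^s|)\cong\jac(\aa^s,V_\aa^s)$. Combined with Lemma \ref{Lemma: Anticommutativity Type A}, this yields $\Pi_\aa^s\cong\jac(\aa^s,V_\aa^s)$.
\end{proof}
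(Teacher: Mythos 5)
Your high-level strategy matches the paper's: reduce to $|W_\aa^s|$ via Lemma \ref{Lemma: Anticommutativity Type A}, then find a diagonal rescaling automorphism $\psi$ with $\psi(|W_\aa^s|)=V_\aa^s$ and apply Lemma \ref{Lemma: Induced isomorphism of Jacobian algebras}. However, the execution has genuine errors, and the specific rescaling you propose does not work.

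First, the quantity $[2]^{-3/2}\bigl([x_0][x_1+x_2+2]\bigr)$ that you ``cancel from every 3-cycle coefficient'' is not a global constant: it depends on the vertex $x$, so you are not rescaling the potential by a single nonzero scalar and the resulting object is not a scalar multiple of $V_\aa^s$. Second, the quantum-number identity you invoke, $[x_1+2][x_2+2][x_1+x_2+3]=[x_1+1][x_2+1][x_1+x_2+2]$, is false in general; e.g.\ for $s=2$, $x_1=x_2=0$ one side is $[2][2][3]=2$ while the other is $[1][1][2]=\sqrt2$. The only identity available is $[s+2-m]=[m]$, which does not collapse these products. Third, in the clockwise cycle $e_x\alpha_0\alpha_2\alpha_1$ the middle arrow has source $x+f_0$, not $x+f_2$ as you wrote, so the sources fed into $c_i$ are misidentified. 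The underlying issue is that square roots of products of three quantum numbers per arrow cannot telescope exactly into $\mu_\aa$: the paper instead uses \emph{fourth} roots of products of four quantum numbers (and divides by $\sqrt[3]{[2]}$ so that $[2]$ accrues to the first power over a 3-cycle), chosen so that $\cof_0(x)\cof_1(x+f_0)\cof_2(x-f_2)$ reproduces $\mu_\aa(e_x\alpha_0\alpha_1\alpha_2)$ exactly, with no residual $x$-dependent factor and no identity beyond $[s+2-m]=[m]$. You would need to redo the ansatz along those lines for the argument to go through.
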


\begin{proof}
We show $\jac(\aa^s,|W_\aa^s|)\cong\jac(\aa^s,V_\aa^s)$, at which point the statement follows by Lemma \ref{Lemma: Anticommutativity Type A}. Let
\begin{align*}
    \cof_0(x)&=\sqrt[4]{[x_1+1][x_1+2][x_1+x_2+2][x_1+x_2+3]}/\sqrt[3]{[2]},\\
    \cof_1(x)&=\sqrt[4]{[x_1][x_1+1][x_2+1][x_2+2]}/\sqrt[3]{[2]},\\
    \cof_2(x)&=\sqrt[4]{[x_2][x_2+1][x_1+x_2+1][x_1+x_2+2]}/\sqrt[3]{[2]}
\end{align*}
for all $x=(x_0,x_1,x_2)\in\aa^s_0$. Let $\psi\colon\cc\aa^s\to\cc\aa^s$ be the algebra automorphism induced by $e_x\alpha_i\mapsto\cof_i(x)e_x\alpha_i$. In view of Lemma \ref{Lemma: Induced isomorphism of Jacobian algebras}, it is enough to show that $\psi(|W_\aa^s|)=V_\aa^s$. Indeed, it is straightforward to calculate
\begin{align*}
    \psi(e_x\alpha_0\alpha_1\alpha_2)&=\cof_0(x)\cof_1(x+f_0)\cof_2(x-f_2)e_x\alpha_0\alpha_1\alpha_2=\mu_\aa(e_x\alpha_0\alpha_1\alpha_2)e_x\alpha_0\alpha_1\alpha_2,\\
    \psi(e_x\alpha_0\alpha_2\alpha_1)&=\cof_0(x)\cof_2(x+f_0)\cof_1(x-f_1)e_x\alpha_0\alpha_2\alpha_1=\mu_\aa(e_x\alpha_0\alpha_2\alpha_1)e_x\alpha_0\alpha_2\alpha_1,
\end{align*}
so we are done.
\end{proof}

\subsection*{Type D}

To get from $(\aa^s,W_\aa^s)$ to $(\aa^s,V_\aa^s)$, the only change we make is to multiply the coefficient of each cycle in the potential by a non-zero constant. The canvas does not change, i.e. $X_{(\aa^s,W_\aa^s)}=X_{(\aa^s,V_\aa^s)}$. In particular, $(\aa^s,V_\aa^s)$ is strongly planar and $\zz_3$ acts by rotations. Thus we can apply the construction of \cite[\S3.2-3]{giovannini2019skew} to $(\aa^s,V_\aa^s)$. The specific coefficients in the potential do not play a role in the construction of the quiver, nor of the idempotent. Hence we obtain a potential $\widetilde{V_\aa^s}$ such that $\jac(\dd^s,\widetilde{V_\aa^s})\cong\eta(\jac(\aa^s,V_\aa^s)\#\zz_3)\eta$, where $\eta$ is as in Theorem \ref{Theorem 1 body}. Furthermore, a cycle appears in $W_\aa^s$ if and only if it appears in $V_\aa^s$, so the same cycles appear in $\widetilde{W_\aa^s}$ and $\widetilde{V_\aa^s}$. That is to say,
\[\widetilde{V_\aa^s}=\sum_{c\in C_1}\mu_\aa(c)\frac{|\zz_3\cdot c|}{3}\widetilde{c}+\sum_{c\in C_2}\mu_\aa(c)\sum_{k=0}^2\zeta^{-p(c)k}\widetilde{c}_k.\]

\begin{defn}
\label{Defn: EP Type D}
\cite[Thm 6.1, 6.2]{evans2009ocneanu} For each $s\ge2$, define a potential on $\dd^s$ by \[V_\dd^s=\sum_c\mu_\dd(c)c,\] where the sum runs over all 3-cycles in $\dd^s$, and $\mu_\dd$ is defined as follows.

If $s=3t+1$ for some $t\in\zz^+$, then
\begin{align*}
    \mu_\dd(\alpha_1\beta_k\gamma_k)&=\zeta^k\frac{[t]\sqrt{[t+1]^3[t+2]}}{\sqrt{3}[2]},\\
    \mu_\dd(\alpha_{2,1}\beta_k\gamma_k)&=\zeta^{-k}\frac{[t+2]\sqrt{[t][t+1]^3}}{\sqrt{3}[2]}.
\end{align*}
For all other cycles $c$, including when $s\not\equiv1$ (mod 3), $\mu_\dd(c)=\widetilde{\mu_\aa}(c)$.
\end{defn}

\begin{remark}
In \cite[Thm 6.2]{evans2009ocneanu}, the authors explicitly give the coefficients of six cycles, and say the rest are given by the ``corresponding" cycles in $\aa^s$. Hence, we set $\mu_\dd(c)=\widetilde{\mu_\aa}(c)$ for all other cycles $c$. Once we make this assumption, we already get the correct coefficients for four of the distinguished cycles, so we omit them in the above definition.
\end{remark}

\begin{prop}
Let $s\ge2$. Then $\Pi_\dd^s\cong\jac(\dd^s,V_\dd^s)$. 
\end{prop}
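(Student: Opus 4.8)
The plan is to mimic the strategy used in type A (Lemma~\ref{Lemma: Anticommutativity Type A} and Proposition~\ref{Prop: EP Type A}): produce an explicit algebra automorphism of $\cc\dd^s$ that rescales each arrow and sends $W_\dd^s$ to $V_\dd^s$, then invoke Lemma~\ref{Lemma: Induced isomorphism of Jacobian algebras}. The key observation to exploit is that both $W_\dd^s$ and $V_\dd^s$ are obtained from $W_\aa^s$, respectively $V_\aa^s$, by the \emph{same} skew-group construction of \cite{giovannini2019skew}, and that the construction of the quiver $\dd^s$ and the idempotent $\eta$ is insensitive to the coefficients in the potential. So morally the coefficient of $\widetilde c$ in $\widetilde{W_\aa^s}$ versus $\widetilde{V_\aa^s}$ differs by exactly the factor by which $\lambda_\aa(c)$ and $\mu_\aa(c)$ differ, and similarly for the split cycles $\widetilde{c_k}$.

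First I would transport the automorphisms already built in type A through the construction. The map $\varphi\psi$ of $\cc\aa^s$ (from the two type-A lemmas) sends $|W_\aa^s|\mapsto V_\aa^s$; since $\varphi\psi$ rescales each arrow $e_x\alpha_i$ by a scalar $\cof_i(x)\parity_i(x)$ and $\zz_3$ acts by rotations commuting suitably with such diagonal automorphisms, it descends to a diagonal automorphism $\widetilde{\varphi\psi}$ of $\eta(\Pi_\aa^s\#\zz_3)\eta\cong\cc\dd^s/\langle\partial W\rangle$, hence lifts to a rescaling automorphism $\Phi$ of $\cc\dd^s$. Concretely $\Phi$ multiplies each $\alpha_{i,j}$ by the scalar attached to a chosen representative of the $\zz_3$-orbit of the corresponding arrow $e_{\omega^j(x)}\alpha_i$, and multiplies each $\beta_k,\gamma_k$ by the scalars attached to $e_{X-f_0}\alpha_0$ and $e_X\alpha_2$. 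I would then check that $\Phi$ carries $\widetilde{W_\aa^s}$ to $\widetilde{V_\aa^s}$: for a cycle $\widetilde c$ with $c\in C_1$, this is exactly the already-verified identity $\psi\varphi(\lambda_\aa(c)c)$-type computation applied to the representative $c$, and the prefactor $|\zz_3\cdot c|/3$ is common to both sides; for the split cycles $\widetilde{c^\pm_k}$, $\Phi$ multiplies $\widetilde{c^\pm_k}$ by the same scalar by which $\varphi\psi$ multiplies $c^\pm$, and the twist $\zeta^{-p(c)k}$ is again common to both sides.

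The one genuinely new point — and the main obstacle — is the fixed cycles when $s=3t+1$. The coefficients $\mu_\dd(\alpha_1\beta_k\gamma_k)$ and $\mu_\dd(\alpha_{2,1}\beta_k\gamma_k)$ given in Definition~\ref{Defn: EP Type D} are \emph{not} simply $\widetilde{\mu_\aa}$ of the corresponding $c^\pm$: they carry extra factors of $[t],[t+2]$ and $\sqrt3$, reflecting that $X-f_0=(t-1,t,t)$ has $x_1=x_2=t$ while $X+f_2=(t+1,t,t-1)$. I would handle this by computing $\widetilde{\mu_\aa}(c^-)=\mu_\aa(c^-)\cdot|\zz_3\cdot c^-|/3=\mu_\aa(c^-)$ and $\widetilde{\mu_\aa}(c^+)=\mu_\aa(c^+)$ explicitly from the formula for $\mu_\aa$ at $x=X-f_0$ (resp.\ $x=X-f_1$), and comparing with Definition~\ref{Defn: EP Type D}: the ratio $\mu_\dd(\alpha_1\beta_k\gamma_k)/\bigl(\widetilde{W_\aa^s}\text{-coefficient of }\widetilde{c^-_k}\bigr) = -\mu_\dd(\alpha_1\beta_k\gamma_k)\zeta^{-k}$ must equal the product of the three $\Phi$-scalars on the arrows $\beta_k,\gamma_k$ and the $1$-arrow of $c^-$. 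The point is that $\Phi$ as defined above already has the right scalars on $\beta_k$ (namely $\cof_0(X-f_0)\parity_0(X-f_0)$, built from $[t]$'s) and on $\gamma_k$ (namely $\cof_2(X)\parity_2(X)$, built from $[t+1]$'s and $[t+2]$'s), and one checks the quantum-number bookkeeping closes, including the $\sqrt3$, exactly as in \cite{evans2009ocneanu}. If a residual constant (an overall $\sqrt3$, or a root of unity) fails to match, I would absorb it by composing $\Phi$ with the diagonal automorphism rescaling $X_k\mapsto c\,X_k$ at the three fixed-copy idempotents — i.e.\ rescaling $\beta_k$ and $\gamma_k^{-1}$ simultaneously — which changes the coefficients of $\widetilde{c^\pm_k}$ without touching any other cycle. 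Once the scalars are matched for all four families of cycles, Lemma~\ref{Lemma: Induced isomorphism of Jacobian algebras} gives $\Pi_\dd^s=\jac(\dd^s,W_\dd^s)\cong\jac(\dd^s,\Phi(W_\dd^s))=\jac(\dd^s,V_\dd^s)$.
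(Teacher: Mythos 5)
Your plan tracks the paper's: transport the type-A isomorphism $\psi\colon\Pi_\aa^s\to\jac(\aa^s,V_\aa^s)$ through the Giovannini--Pasquali construction, then correct for the fixed cycles by an explicit rescaling of $\cc\dd^s$. Where the paper chains the isomorphisms through the skew group algebra --- using that $(\psi\otimes\id)(\eta)=\eta$ because $\eta$ is a sum of length-zero idempotents, followed by a final automorphism $\chi$ sending $\widetilde{V_\aa^s}\mapsto V_\dd^s$ --- you prefer to assemble a single rescaling automorphism $\Phi$ of $\cc\dd^s$ directly; the two routes are the same in spirit, and your observation that the type-A rescalings commute with $\omega$ (so the scalars descend to $\zz_3$-orbits and $\Phi$ is well-defined) is exactly the right compatibility to check.

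The genuine gap is your proposed absorption step. The residual mismatch between $\widetilde{V_\aa^s}$ and $V_\dd^s$ is the $k$-dependent scalar $\zeta^k/\sqrt3$ attached to each $\widetilde{c^\pm_k}$. You propose to remove it by ``rescaling $X_k\mapsto c\,X_k$, i.e.\ rescaling $\beta_k$ and $\gamma_k^{-1}$ simultaneously'' and claim this changes the coefficients of $\widetilde{c^\pm_k}$. It does not: rescaling the idempotent $e_{X_k}$ by $c_k$ is precisely $\beta_k\mapsto c_k\beta_k$, $\gamma_k\mapsto c_k^{-1}\gamma_k$, so $\beta_k\gamma_k\mapsto\beta_k\gamma_k$ and every cycle through $X_k$ is fixed --- a vertex rescaling can never alter a potential's cycle coefficients. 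What actually works, and is what the paper's $\chi$ does, is to rescale the single arrow $\beta_k\mapsto\frac{\zeta^k}{\sqrt3}\beta_k$ while leaving $\gamma_k$ (and all other arrows) fixed. This is a legitimate automorphism of $\cc\dd^s$ because the arrows are free generators over the vertex idempotents, and it genuinely multiplies both $\alpha_1\beta_k\gamma_k$ and $\alpha_{2,1}\beta_k\gamma_k$ by $\zeta^k/\sqrt3$. With that replacement your argument closes. (A minor slip along the way: $X-f_0=(t+1,t-1,t)$, not $(t-1,t,t)$.)
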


\begin{proof}
First, we show that $\jac(\dd^s,\widetilde{V_\aa^s})\cong\jac(\dd^s,V_\dd^s)$. To this end, let $\chi\colon\cc\dd^s\to\cc\dd^s$ be the algebra automorphism such that $\chi(\beta_k)=\frac{\zeta^k}{\sqrt{3}}\beta_k$ and $\chi$ is the identity on all other arrows. We check that $\chi(\widetilde{\mu_\aa}(c)c)=\mu_\dd(c)c$ for all 3-cycles $c$ in $\dd^s$, whence $\chi(\widetilde{V_\aa^s})=V_\dd^s$ and the claim follows by Lemma \ref{Lemma: Induced isomorphism of Jacobian algebras}.

If $c$ is not one of the distinguished cycles in Definition \ref{Defn: EP Type D}, $\chi(\widetilde{\mu_\aa}(c)c)=\widetilde{\mu_\aa}(c)c=\mu_\dd(c)c$ by definition.

If $c=\alpha_1\beta_k\gamma_k$ then $c=\widetilde{c_k^-}$, where $c^-=e_{(t+1,t-1,t)}\alpha_0\alpha_2\alpha_1$ (see the construction of the potential in \S\ref{Section: Morita equivalence}). So
\begin{align*}
    \chi(\widetilde{\mu_\aa}(c)c)&=\frac{\zeta^k}{\sqrt{3}}\widetilde{\mu_\aa}(c)c\\
    &=\frac{\zeta^k}{\sqrt{3}}\mu_\aa(c^-)\zeta^{-p(c^-)k}c\\
    &=\zeta^k\frac{[t+1]\sqrt{[t][t+2][2t+1][2t+2]}}{\sqrt{3}[2]}c\\
    &=\zeta^k\frac{[t]\sqrt{[t+1]^3[t+2]}}{\sqrt{3}[2]}c\\
    &=\mu_\dd(c)c,
\end{align*}
where we used that $p(c^-)=0$ and that $[n]=[3t+3-n]$ for all $1\le n\le 2t+2$ \cite[Lem 4.3]{evans2009ocneanu}.

Similarly, if $c=\alpha_{2,1}\beta_k\gamma_k$ then $c=\widetilde{c_k^+}$, where $c^+=e_{(t+1,t,t-1)}\alpha_0\alpha_1\alpha_2$. Recalling $p(c^+)=-1$, one calculates as above to show $\chi(\widetilde{\mu_\aa}(c)c)=\mu_\dd(c)c$. Therefore $\jac(\dd^s,\widetilde{V_\aa^s})\cong\jac(\dd^s,V_\dd^s)$.

By Proposition \ref{Prop: EP Type A}, we have an isomorphism $\psi\colon\Pi_\aa^s\to\jac(\aa^s,V_\aa^s)$, which induces an isomorphism $\psi\otimes\id\colon\Pi_\aa^s\#\zz_3\to\jac(\aa^s,V_\aa^s)\#\zz_3$. Note that $\eta=\sum d_i\otimes e_j$ for some idempotents $d_i\in\Pi_\aa^s$ (which are length zero) and $e_j\in\cc\zz_3$ \cite[Notation 3.11]{giovannini2019skew}. Since $\psi$ acts trivially on the $d_i$, $(\psi\otimes\id)(\eta)=\eta$, and we have a chain of isomorphisms \[\Pi_\dd^s\cong\eta(\Pi_\aa^s\#\zz_3)\eta\overset{\psi\otimes\id}\longrightarrow\eta(\jac(\aa^s,V_\aa^s)\#\zz_3)\eta\cong\jac(\dd^s,\widetilde{V_\aa^s})\overset\chi\longrightarrow\jac(\dd^s,V_\dd^s).\]
\end{proof}

\section{Cuts of $\Pi_\dd^s$}

\label{Section: Cuts}

We would like to emulate type A, and take cuts of $\Pi_\dd^s$ to obtain some 2-representation-finite algebras of type D. This turns out to be impossible if $s\not\equiv1$ (mod 3).

\begin{lemma}
Let $s\ge2$, $s\not\equiv1$ (mod 3). There are zero cuts of $(\dd^s,W_\dd^s)$.
\end{lemma}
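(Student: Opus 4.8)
The plan is to show that no subset $C\subseteq\dd^s_1$ can make $W_\dd^s$ homogeneous of degree $1$ with respect to $g_C$. The key point is to produce, for each candidate cut, a $3$-cycle on which the degree is forced to be $0$ or $\ge 2$. First I would recall that a cut must select exactly one arrow from each $3$-cycle appearing in $W_\dd^s$, since the coefficient of every such cycle is non-zero and the potential is a sum over all of them. So it suffices to exhibit cycles whose overlap pattern is incompatible with any such selection --- for instance, to find an arrow that either lies on no $3$-cycle (forcing it not to be chosen, but then some cycle through it is uncovered) or, more usefully, to find a loop $e_x\alpha_{i,j}$ and the relation that $\alpha_{i,j}^3$ appears in $W_\dd^s$, forcing $g_C$ on that single loop to be $1/3$, which is impossible for an integer-valued grading.

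The cleanest route I would take is via the loops. When $s\not\equiv 1\pmod 3$, the quiver $\dd^s$ has at least one loop (visible in $\dd^2,\dd^3,\dd^5,\dd^6$ and in general coming from vertices $x$ with a stabiliser-type condition under $\omega$), and by part (2) of Definition \ref{Defn: Potential on D^s} the cubed loop $e_x\alpha_{i,j}^3$ appears in $W_\dd^s$ with coefficient $\pm\tfrac13\neq 0$. If $C$ were a cut, then $W_\dd^s$ homogeneous of degree $1$ would force $3\,g_C(\alpha_{i,j})=1$, i.e.\ $g_C(\alpha_{i,j})=\tfrac13$, contradicting $g_C(\alpha_{i,j})\in\{0,1\}$. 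Hence no cut exists. So the main task reduces to verifying: \emph{whenever $s\ge 2$ and $s\not\equiv 1\pmod 3$, the quiver $\dd^s$ contains a loop}. This is a combinatorial statement about $Q_0$ and $Q_1$: a loop is an arrow $x\to\omega^j(x)+f_i=x$, i.e.\ a vertex $x\in Q_0$ together with $i,j$ such that $\omega^j(x)+f_i=x$; one checks that such pairs exist precisely when the orbit of $x$ under $\omega$ has size dividing something compatible with $s-1\not\equiv 0\pmod 3$, and one exhibits an explicit $x$ (e.g.\ a vertex as close as possible to the ``centre'' $(t,t,t)$, which is absent from $Q_0$ exactly when $s\not\equiv 1\pmod 3$).

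The main obstacle is pinning down the existence of the loop in full generality rather than just in the small examples of Figure \ref{Figure: Examples of D^s}. Concretely, I would argue as follows: write $s-1=3q+r$ with $r\in\{0,2\}$; the $\omega$-orbits on $\aa^s_0$ have size $3$ except for the central fixed point, which exists iff $3\mid s-1$; when $r\neq 0$ there is no fixed point but there are vertices $x$ with $\omega(x)\neq x$ yet $x$ and $\omega^j(x)$ adjacent in $\aa^s$ via some $f_i$, and after passing to the $\zz_3$-quotient these become loops at the representative vertex in $Q_0$. I expect the bookkeeping of which representative is chosen (the condition $x\succ\omega(x)$, $x\succ\omega^2(x)$) and matching $\omega^j(x)+f_i=x$ to the arrow-naming convention $\alpha_{i,j}$ to be the fiddly part, but it is a finite check: identify one vertex $x_0$ near the centre with a $2$-cycle-type adjacency in $\aa^s$ collapsing to a loop, confirm $e_{x_0}\alpha_{i,j}$ is genuinely a loop in $\dd^s$, invoke Definition \ref{Defn: Potential on D^s}(2) to get the $\pm\tfrac13 e_{x_0}\alpha_{i,j}^3$ term, and conclude. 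Once the loop is in hand, the contradiction with $g_C$ being $\{0,1\}$-valued is immediate, so the count of cuts is zero.
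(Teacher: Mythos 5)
Your proposal is correct and takes essentially the same approach as the paper: locate a loop $e_x\alpha_{i,j}$ in $\dd^s$, observe from Definition \ref{Defn: Potential on D^s}(2) that the cubed loop appears in $W_\dd^s$ with nonzero coefficient, and conclude that $g_C$ of this cycle is $3\,g_C(\alpha_{i,j})\in\{0,3\}$, which can never equal $1$. The only piece you leave implicit is the explicit loop; the paper writes it down as $\alpha_{2,2}$ at $x=(t,t,t-1)$ when $s=3t$ and $\alpha_{2,1}$ at $y=(t+1,t,t)$ when $s=3t+2$, which is exactly the ``vertex near the centre'' finite check you describe.
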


\begin{proof}
If $s=3t$ for some $t\in\zz^+$, there is a loop $\alpha_{2,2}\colon x\longrightarrow x$ in $\dd^s_1$, where $x=(t,t,t-1)$. By Definition \ref{Defn: Potential on D^s}, the 3-cycle $e_x\alpha_{2,2}^3$ appears in $W_\dd^s$ with non-zero coefficient. For any subset $C\subseteq\dd^s_1$, either $g_C(e_x\alpha_{2,2}^3)=0$ or $g_C(e_x\alpha_{2,2}^3)=3$, so $C$ is not a cut of $(\dd^s,W_\dd^s).$

If $s=3t+2$ for some $t\in\nn$, there is a loop $\alpha_{2,1}\colon y\longrightarrow y$ in $\dd^s_1$, where $y=(t+1,t,t)$. As before, $e_y\alpha_{2,1}^3$ appears in $W_\dd^s$ with non-zero coefficient, and the same argument shows there are no cuts of $(\dd^s,W_\dd^s)$.
\end{proof}

Thus, it is impossible for $\Pi_\dd^s$ to be a 3-preprojective algebra if $s\not\equiv1$ (mod 3). We will see that if $s\equiv1$ (mod 3), then $\Pi_\dd^s$ is indeed a 3-preprojective algebra.

\begin{defn}
\cite[Def 7.5]{giovannini2019skew} Let $(Q,W)$ be a $QP$, and let $G$ be a group acting on $\jac(Q,W)$. We call a cut $C\subseteq Q_1$ \textit{G-invariant} if $g\cdot\alpha\in C$ for all $\alpha\in C$. We say a QP $(Q,W)$ has \emph{enough ($G$-invariant) cuts} if, for every $\alpha\in Q_1$, there exists a ($G$-invariant) cut $C$ with $\alpha\in C$.
\end{defn}

\begin{lemma}
\label{Lemma: D^s has enough cuts}
Let $s=3t+1$ for some $t\in\zz^+$. Then $(\dd^s,W_\dd^s)$ has enough cuts.
\end{lemma}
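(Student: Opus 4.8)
The goal is to show that for $s = 3t+1$, every arrow of $\dd^s$ lies in some cut of $(\dd^s, W_\dd^s)$. The most natural strategy is to exploit Theorem~\ref{Theorem 1 body}: since $(\dd^s, W_\dd^s) = (\widetilde{\aa^s}, \widetilde{W_\aa^s})$ is the Giovannini--Pasquali skew-group QP associated to $(\aa^s, W_\aa^s)$ with the $\zz_3$-action by rotations, and since $(\aa^s, W_\aa^s)$ is well-known to have enough cuts (indeed enough $\zz_3$-invariant cuts), one should be able to transport cuts of $\aa^s$ to cuts of $\dd^s$. Concretely, I would invoke the transfer result of \cite[\S7]{giovannini2019skew}: a $G$-invariant cut $C$ of $(Q,W)$ induces a cut $\widetilde{C}$ of $(\widetilde{Q}, \widetilde{W})$, consisting of exactly those arrows of $\widetilde{Q}$ that correspond (in the sense of the `Arrows' construction in \S\ref{Section: Morita equivalence}) to arrows of $C$. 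So the plan reduces to two things: first, show $(\aa^s, W_\aa^s)$ has enough $\zz_3$-invariant cuts; second, check that the induced map on arrows is surjective onto $\dd^s_1$ in the appropriate sense, so that every arrow of $\dd^s$ — including the $\beta_k$ and $\gamma_k$ — is hit.

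For the first point, recall that a cut of $(\aa^s, W_\aa^s)$ is a choice of one arrow from each 3-cycle in the potential such that no two chosen arrows lie in a common 3-cycle; the standard cuts are obtained by fixing $i \in \{0,1,2\}$ and taking $C = \{e_x\alpha_i \mid e_x\alpha_i \in \aa^s_1\}$ (all arrows of a fixed ``direction''). Each such $C$ is manifestly $\omega$-invariant, since $\omega$ sends $e_x\alpha_i$ to $e_{\omega(x)}\alpha_{i-1}$ — wait, that changes the direction, so one must instead take the union over a $\zz_3$-orbit of directions, i.e. $C$ = all arrows labelled $\alpha_0$, $\alpha_1$ \emph{or} $\alpha_2$ — that is everything, which is not a cut. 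The correct construction is subtler: one takes, for each rhombus/3-cycle, a consistent choice invariant under rotation; the cleanest is to use that the three ``edge'' cuts $C_i$ (arrows pointing in direction $f_i$) are permuted cyclically by $\omega$, so while no single $C_i$ is invariant, one can still directly verify each arrow lies in some $C_i$ and that one can symmetrize. I would actually sidestep this and argue directly on $\dd^s$: define, for each residue class, an explicit grading $g$ on $\cc\dd^s$ assigning degrees in $\{0,1\}$ to arrows so that $W_\dd^s$ is homogeneous of degree $1$, and check arrow-by-arrow (using the three cases of Definition~\ref{Defn: Potential on D^s} for the coefficient function $\lambda_\dd$) that one can always arrange a given arrow to have degree $1$. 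The arrows $\alpha_{i,j}$ behave exactly like type-A arrows twisted by $j$; the loops $\alpha_{i,j}$ with $3j \equiv 0$ force the grading on that loop to be... but here $s \equiv 1$, so by the Lemma preceding this one there are no such problematic loops, which is precisely why the hypothesis $s = 3t+1$ is needed.

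The genuinely delicate step is handling the arrows $\beta_k, \gamma_k$ and the vertex-triplication near $X$. In any cut, for each $k$ exactly one of $\{\alpha_1, \beta_k, \gamma_k\}$ and exactly one of $\{\alpha_{2,1}, \beta_k, \gamma_k\}$ must have degree $1$ (these being the two 3-cycles through $X_k$ with nonzero coefficient, from case 3 of Definition~\ref{Defn: Potential on D^s}). Since both cycles share $\beta_k$ and $\gamma_k$, the constraints couple: if $\beta_k$ has degree $1$ then $\alpha_1, \gamma_k$ and $\alpha_{2,1}$ all have degree $0$; similarly for $\gamma_k$; whereas $\alpha_1$ having degree $1$ forces $\beta_k, \gamma_k$ degree $0$ and then $\alpha_{2,1}$ must be degree $1$ too — but $\alpha_1$ and $\alpha_{2,1}$ are different arrows (one is $e_{X+f_2}\alpha_1$, the other $e_{X+f_2}\alpha_{2,1}$, both arrows $X+f_2 \to X-f_0$), so that's consistent as long as there is no \emph{other} 3-cycle in $\dd^s$ containing both. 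I would verify there is none, then conclude: to put $\beta_k$ in a cut, set $g(\beta_k) = 1$ and propagate; to put $\gamma_k$ in a cut, symmetrically; to put $\alpha_1$ (resp.\ $\alpha_{2,1}$) in a cut, set both $\alpha_1$ and $\alpha_{2,1}$ to degree $1$ and extend over the rest of $\dd^s$ using a type-A-style edge cut that is compatible. The bookkeeping of ``compatible extension to the rest of $\dd^s$'' — i.e.\ showing the local choice near $X$ does not obstruct a global cut — is where I expect to spend the real effort, and I would organize it by exhibiting one or two explicit cuts of $\dd^s$ by listing their arrows and checking $W_\dd^s$-homogeneity against Example~\ref{Example: Potentials on D^4, D^5} as a sanity check, then arguing the general pattern by the rotational structure inherited from $\aa^s$.
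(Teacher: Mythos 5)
Your opening paragraph is precisely the paper's proof: given $\delta\in\dd^s_1$, one produces a corresponding arrow $\delta'\in\aa^s_1$ (via the `Arrows' construction of \S\ref{Section: Morita equivalence}), invokes the fact that $(\aa^s,W_\aa^s)$ has enough $\zz_3$-invariant cuts to find such a cut $C'$ with $\delta'\in C'$, and transfers $C'$ to a cut $C$ of $(\dd^s,W_\dd^s)$ containing $\delta$ using the recipe of \cite[Prop 7.3]{giovannini2019skew}. Where you go astray is the second paragraph: you try to exhibit $\zz_3$-invariant cuts of $\aa^s$ by hand, notice correctly that the naive directional cuts $C_i=\{\alpha_i\text{-arrows}\}$ are cyclically permuted by $\omega$ rather than fixed, and then conclude the route is blocked and must be sidestepped. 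But the existence of enough $\zz_3$-invariant cuts on $(\aa^s,W_\aa^s)$ is exactly \cite[Prop 8.2]{giovannini2019skew}, and the paper simply cites it; one is not expected to rederive it. (Such cuts do exist --- they assign a different direction in each of three $\omega$-related wedges of the triangle; the cut $K'$ written out in \S\ref{Section: Calabi-Yau} is one.) Once that citation is in place there is nothing further to prove.

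Your replacement plan --- constructing cuts directly on $\dd^s$ --- is never carried out and has genuine gaps. You correctly identify the coupling constraints around $X_k$ forced by the cycles $\alpha_1\beta_k\gamma_k$ and $\alpha_{2,1}\beta_k\gamma_k$, and you correctly note that the absence of $\omega$-fixed loops (the content of the preceding lemma) is why $s\equiv1\pmod 3$ is necessary. But you explicitly defer the global compatibility argument (``where I expect to spend the real effort''), you leave unchecked the claim that no other $3$-cycle contains both $e_{X+f_2}\alpha_1$ and $e_{X+f_2}\alpha_{2,1}$, and no cut of $\dd^s$ is ever exhibited. As written the proposal does not establish the lemma. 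Return to your first plan and supply the one missing ingredient, the citation to \cite[Prop 8.2]{giovannini2019skew}; the rest of that plan is the paper's proof verbatim.
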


\begin{proof}
Consider an arrow $\delta\in\dd^s_1$. There are three possibilities.
\begin{enumerate}
    \item $\delta=e_x\alpha_{i,j}$ for some $x\in\dd^s_0$, $i,j\in\{0,1,2\}$. In this case let $\delta'=e_{\omega^j(x)}\alpha_i\in\aa^s_1$.
    \item $\delta=\beta_k$ for some $k\in\{0,1,2\}$. In this case let $\delta'=\alpha_0e_X\in\aa^s_1$, where $X=(t,t,t)$.
    \item $\delta=\gamma_k$ for some $k\in\{0,1,2\}$. In this case let $\delta'=e_X\alpha_2\in\aa^s_1$.
    \end{enumerate}
By \cite[Prop 8.2]{giovannini2019skew}, $(\aa^s,W_\aa^s)$ has enough $\zz_3$-invariant cuts, so there exists a $\zz_3$-invariant cut $C'\subset\aa^s_1$ containing $\delta'$. By the recipe in \cite[Prop 7.3]{giovannini2019skew},
\begin{equation}
\label{Equation: Induced cut}
    C=\{e_x\alpha_{i,j}\mid x\in Q_0,\ e_{\omega^j(x)}\alpha_i\in C'\}\cup\{\beta_0,\beta_1,\beta_2\mid\alpha_0e_X\in C'\}\cup\{\gamma_0,\gamma_1,\gamma_2\mid e_X\alpha_2\in C'\}
\end{equation} is a cut of $(\dd^s,W_\dd^s)$. Since $\delta'\in C'$, we have $\delta\in C$, so $(\dd^s,W_\dd^s)$ has enough cuts.
\end{proof}

\begin{lemma}
\label{Lemma: Skew group algebra is symmetric}
\cite[Cor 2.6]{giovannini2019skew}
Let $\Lambda$ be a Frobenius algebra, whose Nakayama automorphism $\sigma$ generates $\im(G)\subseteq\aut(\Lambda)$. Then $\Lambda\#G$ is symmetric.
\end{lemma}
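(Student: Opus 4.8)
The plan is to exhibit an explicit symmetrizing linear form on $\Lambda\#G$. Since $\Lambda$ is Frobenius, I would fix a Frobenius form $\nu\colon\Lambda\to k$, i.e.\ a linear functional whose kernel contains no nonzero one-sided ideal; its Nakayama automorphism $\sigma$ is characterised by $\nu(ab)=\nu(b\sigma(a))$ for all $a,b\in\Lambda$. Two elementary facts will be used throughout. First, taking $b=1$ gives $\nu(a)=\nu(\sigma(a))$, so $\nu$ is $\sigma$-invariant, hence $\sigma^i$-invariant for every $i$. Second, $\nu$ is non-degenerate in the sense that for any $0\neq w\in\Lambda$ there is $a\in\Lambda$ with $\nu(aw)\neq0$.

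Write $\rho\colon G\to\aut(\Lambda)$ for the action, so by hypothesis $\rho(G)=\langle\sigma\rangle$. Let $n$ be the (finite) order of $\sigma$, and for $g\in G$ write $\rho(g)=\sigma^{k(g)}$ with $k(g)\in\zz/n$; then $k\colon G\to\zz/n$ is a group homomorphism. I would define $\mu\colon\Lambda\#G\to k$ on the standard basis by $\mu(a\# g)=\nu(a)$ if $k(g)=1$ and $\mu(a\# g)=0$ otherwise, and then check that $\mu$ is a trace form. Given basis elements $a\# g$ and $b\# g'$, one has $(a\# g)(b\# g')=a\,\rho(g)(b)\# gg'$, and since $k(gg')=k(g)+k(g')=k(g'g)$ the two products $\mu((a\# g)(b\# g'))$ and $\mu((b\# g')(a\# g))$ vanish together. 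When they do not vanish, $k(g)+k(g')=1$ in $\zz/n$, and the Nakayama identity together with $\sigma$-invariance of $\nu$ gives
\[\nu\big(a\,\sigma^{k(g)}(b)\big)=\nu\big(\sigma^{k(g)}(b)\,\sigma(a)\big)=\nu\big(b\,\sigma^{1-k(g)}(a)\big)=\nu\big(b\,\sigma^{k(g')}(a)\big),\]
which is exactly $\mu((b\# g')(a\# g))$. Extending bilinearly, $\mu(uv)=\mu(vu)$ for all $u,v\in\Lambda\#G$.

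It remains to verify that $\mu$ is non-degenerate, i.e.\ that the bilinear form $(u,v)\mapsto\mu(uv)$ has trivial radical. I would take $0\neq z=\sum_{g}z_g\# g$ and choose $g_0$ with $z_{g_0}\neq0$. When the action is faithful (as in the case of interest, $\Lambda=\Pi_\aa^s$ and $G=\zz_3$), there is a unique $h_\sigma\in G$ with $\rho(h_\sigma)=\sigma$; putting $h=h_\sigma g_0^{-1}$, the only term of $(a\# h)z=\sum_g a\,\rho(h)(z_g)\# hg$ whose group component maps to $\sigma$ is the one with $g=g_0$, so $\mu\big((a\# h)z\big)=\nu\big(a\,\rho(h)(z_{g_0})\big)$. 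Since $\rho(h)$ is an automorphism, $\rho(h)(z_{g_0})\neq0$, and non-degeneracy of $\nu$ produces $a$ making this scalar nonzero. Hence $\mu$ is a non-degenerate trace form and $\Lambda\#G$ is symmetric.

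The main obstacle, and the one genuinely subtle point, is identifying the group elements on which $\mu$ must be supported: the naive guess (support on the identity, imitating the symmetric case $\sigma=\id$) fails, and one is forced onto the elements mapping to $\sigma$ precisely so that the Nakayama twist $\sigma(a)$ created by the trace property of $\nu$ is cancelled by the group action — this is exactly where the hypothesis $\langle\sigma\rangle=\im(G)$ is used. A secondary technical point is the isolation step in the non-degeneracy argument: to obtain the statement for a non-faithful action one would instead have to separate the contributions indexed by $\ker\rho$, for instance by twisting $\mu$ with characters of $\ker\rho$, at the cost of an invertibility hypothesis on $|\ker\rho|$; but for the application to $\Pi_\aa^s\#\zz_3$ the faithful case is all that is needed.
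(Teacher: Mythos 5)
The paper does not reprove this lemma; it cites \cite[Cor~2.6]{giovannini2019skew} directly, so there is no in-text argument to compare against. Your proposal is a self-contained construction of a symmetrizing form, and the computations are correct. The trace-property check is right, and you correctly identify the role of the hypothesis: $\im(G)\subseteq\langle\sigma\rangle$ makes the homomorphism $k\colon G\to\zz/n$ well defined (and also gives $\rho(g)$-invariance of $\nu$), while $\sigma\in\im(G)$ makes the coset $k^{-1}(1)$ nonempty so that $\mu$ is not identically zero. One small point worth recording explicitly: the finiteness of $\operatorname{ord}(\sigma)$ is not automatic for a Frobenius algebra, but here it follows from $\sigma\in\im(G)$ with $G$ finite.

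The one caveat, which you already flag yourself, is that your non-degeneracy argument requires the action to be faithful. That is a real restriction for your $\mu$: if $\ker\rho\neq\{e\}$ and $n\in\ker\rho\setminus\{e\}$, then $z=(w\#g_0)-(w\#g_0 n)$ with $w\neq0$ lies in the radical of the form $(u,v)\mapsto\mu(uv)$, since the two summands always contribute with equal $\nu$-values on the same coset of $\ker\rho$. So the proposal as written proves the lemma only under faithfulness. For the purposes of this paper that is sufficient, since the only application is $\Lambda=\Pi_\aa^s$ with $G=\zz_3$ acting faithfully (Lemma~\ref{Lemma: Z_3 acts by rotations}). But the statement as cited is more general, and your sketched repair ("twist by characters of $\ker\rho$") would need more care: $\ker\rho$ need not be abelian, and in any case the form must remain a class function on $G$ to preserve the trace property while still having an invertible ``symbol'' in $k[G]$. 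The cited source presumably avoids this by a different route, namely computing that $\Lambda\# G$ is Frobenius with Nakayama automorphism $a\#g\mapsto\sigma(a)\#g$ relative to the na\"ive form $\mu_0(a\#g)=\nu(a)\delta_{g,e}$ (this uses exactly the $\rho(g)$-invariance of $\nu$) and then showing that automorphism is inner; your approach buys an explicit symmetrizing form, at the price of needing faithfulness or a further idempotent decomposition of $k[\ker\rho]$.
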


We now prove Theorem \ref{Theorem 2}.

\begin{theorem}
\label{Theorem 2 body}
For all $s\ge2$, $\Pi_\dd^s$ is symmetric. If $s\equiv1$ (mod 3) and $C\subset\dd^s_1$ is a cut, then $\Pi_\dd^s/\langle C\rangle$ is 2-representation-finite, and its 3-preprojective algebra is $\Pi_\dd^s$.
\end{theorem}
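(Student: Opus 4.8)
The plan is to deduce both assertions from the classification of 2-representation-finite algebras, Theorem \ref{Theorem: Classification of 2-rep-finite algebras}, once we know that $(\dd^s, W_\dd^s)$ is a selfinjective QP. By Theorem \ref{Theorem 1 body}, $\Pi_\dd^s \cong \eta(\Pi_\aa^s \# \zz_3)\eta$ for some idempotent $\eta$, so $\Pi_\dd^s$ is Morita equivalent to $\Pi_\aa^s \# \zz_3$. By Proposition \ref{Prop: Pi_A is Frobenius}, $\Pi_\aa^s$ is Frobenius with Nakayama automorphism $\omega$, and $\omega$ generates the image of the $\zz_3$-action in $\aut(\Pi_\aa^s)$ (this image is all of $\langle\omega\rangle$ since $\omega^3 = \id$, and it is nontrivial by the faithfulness established in Lemma \ref{Lemma: Z_3 acts by rotations}). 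Hence Lemma \ref{Lemma: Skew group algebra is symmetric} applies and $\Pi_\aa^s \# \zz_3$ is symmetric. Being symmetric is a Morita-invariant property (more precisely, a corner algebra $\eta A \eta$ of a symmetric algebra $A$ with $A\eta A = A$ is again symmetric), so $\Pi_\dd^s$ is symmetric; in particular it is selfinjective, so $(\dd^s, W_\dd^s)$ is a selfinjective QP.

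\textbf{Concluding from the classification.} Now assume $s \equiv 1 \pmod 3$ and let $C \subset \dd^s_1$ be a cut. Since $(\dd^s, W_\dd^s)$ is a selfinjective QP, part (1) of Theorem \ref{Theorem: Classification of 2-rep-finite algebras} immediately gives that the truncated Jacobian algebra $\jac(\dd^s, W_\dd^s)_C$ is 2-representation-finite and that its 3-preprojective algebra is $\jac(\dd^s, W_\dd^s) = \Pi_\dd^s$. Finally, by the remark following the definition of truncated Jacobian algebras, $\jac(\dd^s, W_\dd^s)_C \cong \jac(\dd^s, W_\dd^s)/\langle C\rangle = \Pi_\dd^s/\langle C\rangle$, which is exactly the statement. (We should also note that Lemma \ref{Lemma: D^s has enough cuts} guarantees cuts exist in this case, so the hypothesis ``$C$ is a cut'' is not vacuous.)

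\textbf{Main obstacle.} The only genuine content beyond citing the three ingredients is the transfer of the symmetric property across the Morita equivalence $\Pi_\dd^s \cong \eta(\Pi_\aa^s \# \zz_3)\eta$: one must check that the corner algebra of a symmetric algebra at a ``full'' idempotent $\eta$ (one with $A\eta A = A$, which holds here because $\eta$ contains a representative idempotent for every $\zz_3$-orbit of vertices) is again symmetric. This is standard — the symmetrizing form on $A$ restricts to a nondegenerate symmetric form on $\eta A \eta$ — but it is the step that requires an argument rather than a citation. A secondary point to verify carefully is that $\im(\zz_3 \to \aut(\Pi_\aa^s))$ is exactly $\langle\omega\rangle$ and not a proper subgroup, so that the hypothesis of Lemma \ref{Lemma: Skew group algebra is symmetric} is met; this follows from $\omega^3 = \id$ together with the faithfulness shown in the proof of Lemma \ref{Lemma: Z_3 acts by rotations}, so $\omega$ has order exactly $3$.
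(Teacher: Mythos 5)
Your proof is correct and follows essentially the same approach as the paper: establish that $\Pi_\dd^s$ is symmetric by applying Lemma \ref{Lemma: Skew group algebra is symmetric} to $\Pi_\aa^s\#\zz_3$ (using Proposition \ref{Prop: Pi_A is Frobenius}) and transferring across the Morita equivalence from Theorem \ref{Theorem 1 body}, then invoke Theorem \ref{Theorem: Classification of 2-rep-finite algebras} for the selfinjective QP $(\dd^s,W_\dd^s)$. The extra details you supply — why $\im(\zz_3)=\langle\omega\rangle$ and why the corner algebra of a symmetric algebra at a full idempotent is symmetric — are correct elaborations of steps the paper handles by brief citation.
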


\begin{proof}
Proposition \ref{Prop: Pi_A is Frobenius} says that $\Pi_\aa^s$ is Frobenius. Its Nakayama automorphism $\omega$ generates $\im(\zz_3)\subseteq\aut(\Pi_\aa^s)$, so we can apply Lemma \ref{Lemma: Skew group algebra is symmetric} to conclude that $\Pi_\aa^s\#\zz_3$ is symmetric. Being symmetric is Morita invariant, so $\Pi_\dd^s$ is also symmetric. In particular, $(\dd^s,W_\dd^s)$ is a selfinjective QP, so if $s\equiv1$ (mod 3) and $C\subset\dd^s_1$ is a cut, then $\Pi_\dd^s/\langle C\rangle$ is 2-representation-finite by Theorem \ref{Theorem: Classification of 2-rep-finite algebras}.
\end{proof}

We can say more.

\begin{prop}
\cite[Thm 7.9]{giovannini2019skew}
Let $(Q,W)$ be a strongly planar selfinjective QP, with a group $G$ acting by rotations and enough $G$-invariant cuts. Then all truncated Jacobian algebras $\jac(\widetilde{Q},\widetilde{W})_C$ are derived equivalent.
\end{prop}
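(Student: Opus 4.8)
The natural route is to realise every such derived equivalence as a composite of \emph{cut mutations}, so the proof splits into two parts: (a) a single cut mutation on a selfinjective QP produces a derived equivalent truncated Jacobian algebra, and (b) under the present hypotheses any two cuts of $(\widetilde Q,\widetilde W)$ are joined by a chain of cut mutations. Part (a) is the theorem of Herschend and Iyama \cite{herschend2011selfinjective}: if $(Q',W')$ is selfinjective, $C$ a cut and $v$ a vertex at which $C$ admits a mutation (the arrows of $C$ incident to $v$ are exactly those leaving $v$, or exactly those entering $v$), then $\mu_v(C)$ is again a cut of a selfinjective QP right-equivalent to $(Q',W')$, and $\jac(Q',W')_{\mu_v(C)}$ is derived equivalent to $\jac(Q',W')_C$ via a one- or two-term (APR-type) tilting complex supported at $v$. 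Since derived equivalence is transitive, everything reduces to proving (b), i.e.\ that the cut-mutation graph of $(\widetilde Q,\widetilde W)$ is connected.

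For (b) I would not argue on $\widetilde Q$ directly but transport the question to $(Q,W)$. By the recipe of \cite[Prop 7.3]{giovannini2019skew} every $G$-invariant cut $C'$ of $(Q,W)$ induces a cut of $(\widetilde Q,\widetilde W)$, and I would first verify that this assignment is a bijection onto all cuts of $(\widetilde Q,\widetilde W)$: injectivity is immediate from the definition of the induced cut, and surjectivity follows because a cut of $\jac(\widetilde Q,\widetilde W)\cong\eta(\jac(Q,W)\#G)\eta$ pulls back along $\eta(-)\eta$ to a grading on $\jac(Q,W)\#G$ placing $kG$ in degree $0$, hence to a $G$-invariant cut of $(Q,W)$. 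It then suffices to connect any two $G$-invariant cuts of $(Q,W)$ by $G$-\emph{equivariant} cut mutations — mutations performed simultaneously at every vertex of a single $G$-orbit — and to check that each such equivariant mutation on $Q$ descends, again by \cite[Prop 7.3]{giovannini2019skew}, to an ordinary cut mutation (or, for an orbit of size three such as the fixed orbit in the type D case, to a coordinated triple of mutations at $X_0,X_1,X_2$) on $\widetilde Q$.

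The connectivity of the $G$-invariant cut-mutation graph of $(Q,W)$ is where strong planarity does the work. Regard a cut as a $\{0,1\}$-valued cellular $1$-cochain on the canvas $X_{(Q,W)}$ whose coboundary evaluates to $1$ on each $2$-cell (this is exactly homogeneity of $W$ in degree $1$). Given $G$-invariant cuts $C',C''$, the difference $g_{C'}-g_{C''}$ is then a $1$-cocycle; because $\im\varepsilon$ is a disk, $H^1(X_{(Q,W)})=0$, so $g_{C'}-g_{C''}=\delta h$ for an integer function $h$ on $Q_0$, and since $G$ acts by rotations (preserving $\varepsilon$ up to isotopy) one can choose $h$ to be $G$-invariant. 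Now induct on $\sum_{v}|h(v)|$: at a vertex $v$ where $h$ is extremal the cocycle condition forces the arrows of $C'$ at $v$ to be exactly the outgoing (or exactly the incoming) ones, so the equivariant mutation at the orbit of $v$ is legal, leaves the cut $G$-invariant, and strictly decreases $\sum_v|h(v)|$; iterating carries $C'$ to $C''$.

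The genuine obstacle is this final combinatorial step together with its descent to $\widetilde Q$. One must check: (i) a vertex that is extremal for $h$ really satisfies the mutation hypothesis of \cite{herschend2011selfinjective} even though it need not be a source or sink of the quiver, and that no $2$-cycles are introduced; (ii) the mutations at distinct vertices of one $G$-orbit commute and assemble, on $\widetilde Q$, into a single well-defined cut mutation (respectively a coordinated triple); and (iii) each intermediate $G$-invariant cut still lies in the image of the correspondence of \cite[Prop 7.3]{giovannini2019skew} — this is precisely where the hypothesis that $(Q,W)$ has \emph{enough} $G$-invariant cuts is used, guaranteeing the induction never stalls on an illegal move. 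Once these points are pinned down, the cut-mutation graph is connected and every edge is a derived equivalence, so transitivity gives the claim.
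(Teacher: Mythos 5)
The paper offers no proof of its own here: the statement is cited verbatim from Giovannini--Pasquali, so your sketch should be compared against what would actually be needed to prove it. Your overall strategy (reduce to cut-mutation connectedness, cite the single-mutation derived equivalence of Herschend--Iyama, then connect cuts via a cohomological ``tightening'' argument using planarity) is the right template, but there is a concrete error at the point you are most confident about.

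Your proof hinges on the claim that the map from $G$-invariant cuts of $(Q,W)$ to cuts of $(\widetilde Q,\widetilde W)$ is a bijection. It is not surjective, and this is not a corner case — it fails already for $s=4$. The recipe of \cite[Prop 7.3]{giovannini2019skew} (reproduced in the proof of Lemma \ref{Lemma: D^s has enough cuts}) always produces cuts of $\dd^s$ that contain \emph{all} of $\beta_0,\beta_1,\beta_2$ or none of them, and similarly for the $\gamma_k$. But in $\dd^4$, the set $C=\{\alpha_0,\beta_0,\gamma_1,\gamma_2\}$ is a genuine cut: the cycles of $W_\dd^4$ are $\alpha_0\alpha_1\alpha_2$ and $\alpha_1\beta_k\gamma_k,\ \alpha_{2,1}\beta_k\gamma_k$ for $k=0,1,2$ (the cycle $\alpha_0\alpha_{2,1}\alpha_2$ has coefficient $0$, so it imposes no constraint), and each of these meets $C$ exactly once. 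This cut, and the eleven others like it obtained by independently choosing one of $\beta_k,\gamma_k$ for each $k$, are not in the image of the correspondence, so your ``transport everything upstairs to $G$-invariant cuts of $(Q,W)$'' step simply does not reach them. Your surjectivity argument — pulling a grading back along $\eta(-)\eta$ and then restricting to $\jac(Q,W)\otimes 1$ — quietly assumes the pulled-back grading is induced by a cut on $(Q,W)$ rather than by an arbitrary $G$-invariant grading, and that is exactly the assumption that fails.

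The fix is to argue on $\widetilde Q$ directly, not on $Q$: one must first show every cut of $(\widetilde Q,\widetilde W)$ can be cut-mutated to one in the image of the correspondence (this is where the local structure around the tripled vertex $X_k$ enters, and where ``enough $G$-invariant cuts'' on $(Q,W)$ is used), and only then descend the $G$-equivariant mutation argument from $(Q,W)$. You also need a statement, not just an assertion, that equivariant cut mutation on $(Q,W)$ is compatible with the correspondence; your point (ii) identifies this but treats it as routine, whereas the tripled-vertex orbit needs real care. Without the preliminary reduction your induction never sees the asymmetric cuts, so the argument as written proves derived equivalence only among the $G$-symmetric ones.
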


\begin{corollary}
\label{Corollary: Truncations are derived equivalent}
Let $s\equiv1$ (mod 3). All truncations $\Pi^s_\dd/\langle C\rangle$ are derived equivalent.
\end{corollary}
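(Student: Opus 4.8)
The plan is to derive Corollary \ref{Corollary: Truncations are derived equivalent} as a direct application of the immediately preceding proposition \cite[Thm 7.9]{giovannini2019skew}, so the real content is checking that its hypotheses are met for the QP $(\aa^s, W_\aa^s)$ in the case $s \equiv 1$ (mod 3). Concretely, I would verify in turn: (i) $(\aa^s, W_\aa^s)$ is strongly planar — this is Lemma \ref{Lemma: A^s is strongly planar}; (ii) the action of $\zz_3$ on $(\aa^s, W_\aa^s)$ is by rotations — this is Lemma \ref{Lemma: Z_3 acts by rotations}; (iii) $(\aa^s, W_\aa^s)$ is a selfinjective QP — this follows from Proposition \ref{Prop: Pi_A is Frobenius}, which says $\Pi_\aa^s = \jac(\aa^s, W_\aa^s)$ is Frobenius, hence in particular selfinjective; and (iv) $(\aa^s, W_\aa^s)$ has enough $\zz_3$-invariant cuts — this is exactly \cite[Prop 8.2]{giovannini2019skew}, which was already quoted and used inside the proof of Lemma \ref{Lemma: D^s has enough cuts}.

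Once those four facts are in place, \cite[Thm 7.9]{giovannini2019skew} applies verbatim to $(\aa^s, W_\aa^s)$ and tells us that all truncated Jacobian algebras $\jac(\widetilde{\aa^s}, \widetilde{W_\aa^s})_C$ are derived equivalent, the index $C$ ranging over cuts of $(\widetilde{\aa^s}, \widetilde{W_\aa^s})$. The final step is a translation of notation: by Theorem \ref{Theorem 1 body} we have $(\widetilde{\aa^s}, \widetilde{W_\aa^s}) = (\dd^s, W_\dd^s)$, so cuts $C$ of $(\widetilde{\aa^s}, \widetilde{W_\aa^s})$ are precisely cuts $C \subset \dd^s_1$ of $(\dd^s, W_\dd^s)$, and $\jac(\widetilde{\aa^s}, \widetilde{W_\aa^s})_C = \jac(\dd^s, W_\dd^s)_C \cong \Pi_\dd^s / \langle C\rangle$ using the identification $\jac(Q,W)_C \cong \jac(Q,W)/\langle C\rangle$ recorded after the definition of a cut. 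This yields exactly the claimed statement. (One should also note that $s \equiv 1$ (mod 3) is needed only to guarantee there exist any cuts at all, via Lemma \ref{Lemma: D^s has enough cuts}; the derived-equivalence conclusion is then automatic.)

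I do not anticipate a genuine obstacle here — the corollary is essentially a specialization of the cited theorem combined with the Morita-type identification already established. The one point requiring a little care is making sure the notion of ``cut'' used in \cite[Thm 7.9]{giovannini2019skew} for the downstairs QP $(\widetilde{Q}, \widetilde{W})$ coincides with the notion of cut of $(\dd^s, W_\dd^s)$ used elsewhere in the paper; since both are defined by the condition that the potential be homogeneous of degree $1$ with respect to $g_C$, and since $(\widetilde{\aa^s}, \widetilde{W_\aa^s}) = (\dd^s, W_\dd^s)$ as QPs by Theorem \ref{Theorem 1 body}, the two notions literally agree, so no reconciliation is needed. Thus the proof is short.

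\begin{proof}
By Lemma \ref{Lemma: A^s is strongly planar} the QP $(\aa^s, W_\aa^s)$ is strongly planar, and by Lemma \ref{Lemma: Z_3 acts by rotations} the group $\zz_3$ acts on it by rotations. Proposition \ref{Prop: Pi_A is Frobenius} tells us that $\Pi_\aa^s = \jac(\aa^s, W_\aa^s)$ is Frobenius, hence selfinjective, so $(\aa^s, W_\aa^s)$ is a selfinjective QP. Finally, by \cite[Prop 8.2]{giovannini2019skew}, $(\aa^s, W_\aa^s)$ has enough $\zz_3$-invariant cuts. Therefore \cite[Thm 7.9]{giovannini2019skew} applies: all truncated Jacobian algebras $\jac(\widetilde{\aa^s}, \widetilde{W_\aa^s})_C$, as $C$ ranges over cuts of $(\widetilde{\aa^s}, \widetilde{W_\aa^s})$, are derived equivalent.

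By Theorem \ref{Theorem 1 body} we have $(\widetilde{\aa^s}, \widetilde{W_\aa^s}) = (\dd^s, W_\dd^s)$, so cuts of $(\widetilde{\aa^s}, \widetilde{W_\aa^s})$ are exactly cuts $C \subset \dd^s_1$ of $(\dd^s, W_\dd^s)$, and $\jac(\widetilde{\aa^s}, \widetilde{W_\aa^s})_C = \jac(\dd^s, W_\dd^s)_C \cong \Pi_\dd^s / \langle C\rangle$. Since $s \equiv 1$ (mod 3), Lemma \ref{Lemma: D^s has enough cuts} guarantees such cuts exist. Hence all truncations $\Pi_\dd^s / \langle C\rangle$ are derived equivalent.
\end{proof}
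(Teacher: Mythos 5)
Your proof is correct and is exactly the implicit argument in the paper: the paper states the corollary with no proof, treating it as an immediate consequence of the preceding proposition (\cite[Thm 7.9]{giovannini2019skew}), and your write-up simply makes explicit the verification of that proposition's hypotheses via Lemmas \ref{Lemma: A^s is strongly planar} and \ref{Lemma: Z_3 acts by rotations}, Proposition \ref{Prop: Pi_A is Frobenius}, and \cite[Prop 8.2]{giovannini2019skew}, followed by the identification $(\widetilde{\aa^s},\widetilde{W_\aa^s})=(\dd^s,W_\dd^s)$ from Theorem \ref{Theorem 1 body}. One small inaccuracy in your parenthetical remark: the hypothesis $s\equiv1\pmod 3$ is not needed merely so that cuts of $(\dd^s,W_\dd^s)$ exist — it is needed so that $(\aa^s,W_\aa^s)$ has enough $\zz_3$-invariant cuts in the first place (for $s\not\equiv1\pmod 3$ a $\zz_3$-invariant cut would have to contain either all or none of the three arrows forming the central 3-cycle, contradicting homogeneity), so \cite[Thm 7.9]{giovannini2019skew} genuinely requires this assumption rather than being vacuously applicable.
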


\section{$\Pi_\dd^s/\langle C\rangle$ is fractional Calabi-Yau}

\label{Section: Calabi-Yau}

We use a theorem of Grant, which relates the Nakayama automorphism of a $(d+1)$-preprojective algebra to the fractional Calabi-Yau property of the associated $d$-representation-finite algebras.

Throughout, let $\Lambda=\jac(Q,W)_C$ be a basic 2-representation-finite algebra, and let $\sigma$ be the Nakayama automorphism of $\Pi=\jac(Q,W)$.

\begin{lemma}\cite[Prop 1.3]{iyama2011cluster}, \cite[Prop 3.2]{grant2020nakayama}.
\label{Lemma: l function}
\begin{itemize}
	\item[(a)]There exists a function $l\colon Q_0\to\zz$ such that $D(\Lambda e_i)\cong\tau_2^{-l(i)}(\sigma(e_i)\Lambda)$ for all $i\in Q_0$.
	\item[(b)]For all $i\in Q_0$, $e_i\Pi\cong D(\Pi\sigma(e_i))\{-l(i)\}$ as graded $\Pi$-modules.
\end{itemize}
Here $\{-\}$ denotes a grading shift, i.e. $(M\{n\})_r=M_{r+n}$.
\end{lemma}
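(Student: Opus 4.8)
The plan is to obtain both parts as bookkeeping around two established results: part~(a) from the structure of higher Auslander--Reiten theory on a $2$-cluster tilting subcategory, as in \cite[Prop.~1.3]{iyama2011cluster}, and part~(b) by transporting (a) across the tensor-grading description $\Pi_j=\hom_\Lambda(\Lambda,\tau_2^{-j}\Lambda)$ together with the Frobenius structure of $\Pi$, as in \cite[Prop.~3.2]{grant2020nakayama}. Accordingly I would present the proof as a short recollection of those propositions, checking that their conventions --- right modules, the grading shift $\{-\}$, and the normalisation of the Nakayama automorphism --- agree with ours.

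For (a): let $M$ be the basic $2$-cluster tilting module, so that $\Pi=\jac(Q,W)$ is its $3$-preprojective algebra by Theorem~\ref{Theorem: Classification of 2-rep-finite algebras}. Every indecomposable injective $\Lambda$-module lies in $\add M$, since an injective has vanishing higher Ext into any module; dually every indecomposable projective lies in $\add M$. As $\gldim\Lambda\le 2$, on the non-projective (respectively non-injective) indecomposable summands of $M$ the higher translates $\tau_2$ and $\tau_2^-$ are mutually inverse, and since $\Lambda$ is $2$-representation-finite $M$ is finite-dimensional; so the summands of $M$ fall into finitely many finite $\tau_2$-orbits, each starting at an indecomposable projective and ending at an indecomposable injective. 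In particular $(\Lambda e_i)^*$ is the end of one such orbit, so there are a unique indecomposable projective $P^{(i)}$ and a unique integer $l(i)\ge 0$ with $(\Lambda e_i)^*\cong\tau_2^{-l(i)}(P^{(i)})$. It then remains only to identify $P^{(i)}$ with $\sigma(e_i)\Lambda$.

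That identification is carried out simultaneously with (b). Since $\Pi$ is Frobenius its Nakayama automorphism $\sigma$ may be chosen to preserve the tensor grading (see \cite{grant2022serre, grant2020nakayama}), so $\sigma$ restricts to an automorphism of $\Pi_0=\Lambda$ permuting the idempotents $e_i$ and making $\sigma(e_i)\Lambda$ an indecomposable projective right $\Lambda$-module. Spelling out the defining bimodule isomorphism (``$D\Pi\cong\Pi$ twisted by $\sigma$'') on the indecomposable projective $e_i\Pi$ yields a graded isomorphism $e_i\Pi\cong(\Pi\sigma(e_i))^*\{-n_i\}$ for some $n_i$. I would then read both sides off through the tensor grading: the degree-$j$ part of $e_i\Pi$ is $\tau_2^{-j}(e_i\Lambda)$, so $e_i\Pi$ has top $e_i\Lambda$ in degree $0$ and its $\Pi$-socle is the $\Lambda$-socle of the injective summand occupying its top degree, which is $\tau_2^{-l}(e_i\Lambda)$ for $l$ the length of the $\tau_2$-orbit through $e_i\Lambda$; comparing with the socle of $(\Pi\sigma(e_i))^*\{-n_i\}$ --- whose degree-$0$ part is the $\Lambda$-dual of the top of $\Pi\sigma(e_i)$ --- forces $n_i=l$ and matches up the two orbit ends. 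Applying the same comparison to $(\Lambda e_i)^*$ then pins down $P^{(i)}=\sigma(e_i)\Lambda$ with exponent $l(i)$, which is (a), and the graded isomorphism just produced is (b) with $n_i=l(i)$.

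The step I expect to be the genuine obstacle is this final matching of indices: one must track exactly how the Nakayama permutation of $\Pi$ restricts to $\Lambda$, how that restriction is related to the bijection ``start of $\tau_2$-orbit $\leftrightarrow$ end of $\tau_2$-orbit'' among the projective and injective summands of $M$, and how $D$ reverses the tensor grading, so as to land on $\sigma$ rather than $\sigma^{-1}$ and on the shift $\{-l(i)\}$ rather than $\{l(i)\}$. Everything else is routine higher Auslander--Reiten bookkeeping, and once the conventions are aligned both assertions reduce to \cite[Prop.~1.3]{iyama2011cluster} and \cite[Prop.~3.2]{grant2020nakayama}.
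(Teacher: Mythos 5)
The paper does not prove this lemma: it is quoted verbatim from \cite[Prop.\ 1.3]{iyama2011cluster} and \cite[Prop.\ 3.2]{grant2020nakayama}, and the text supplies no argument of its own to compare against. Your sketch is a reasonable reconstruction of those sources --- part~(a) from the $\tau_2$-orbit structure of a $2$-cluster tilting module and part~(b) from the graded Frobenius isomorphism $D\Pi\cong{}_1\Pi_\sigma$ read against the tensor grading --- and the place you yourself flag (matching the Nakayama permutation of $\Pi_0$ with the projective--injective orbit bijection, and pinning down $\sigma$ versus $\sigma^{-1}$ and $\{-l(i)\}$ versus $\{l(i)\}$) is precisely the bookkeeping that the cited propositions carry out and that your outline leaves gestural rather than checked. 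Since the paper treats this as an external citation, that is an appropriate level of detail; if you wanted a self-contained proof you would need to perform that index-matching explicitly, and also be slightly more careful in identifying the degree-$j$ piece $e_i\Pi_j$ with $\hom_\Lambda(\Lambda,\tau_2^{-j}(e_i\Lambda))$ rather than with $\tau_2^{-j}(e_i\Lambda)$ itself, though this is a harmless abuse.
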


Our main tool will be the following theorem.

\begin{theorem}
\label{Theorem: FInite order Nakayama implies fCY}
If there exists $k\in\zz^+$ such that $\sigma^k=\id$, then 
\begin{itemize}
    \item[(a)] there exists $N\in\zz$ such that $\sum_{j=1}^k l(\sigma^j(i))=N$ for all $i\in Q_0$;
    \item[(b)] $\Lambda$ is fractional Calabi-Yau of dimension $2N/(k+N)$.
\end{itemize}
\end{theorem}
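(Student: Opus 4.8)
The plan is to deduce this from Grant's criterion \cite{grant2022serre}, which states that a basic $2$-representation-finite $\Lambda$ is fractional Calabi-Yau precisely when the Nakayama automorphism $\sigma$ of $\Pi$ has finite order, together with the more refined statement in \cite{grant2020nakayama} that tracks the exact Calabi-Yau dimension in terms of $\sigma$ and the grading. So the hypothesis $\sigma^k=\id$ immediately gives some fractional Calabi-Yau property; the work is in pinning down the dimension as $2N/(k+N)$ and in proving part (a), that the sum of the grading-shift values $l$ around a $\sigma$-orbit is independent of the orbit.

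For part (a), the key input is Lemma \ref{Lemma: l function}(b): $e_i\Pi\cong(\Pi\sigma(e_i))^*\{-l(i)\}$ as graded $\Pi$-modules. First I would iterate this. Applying the isomorphism and using that $(-)^*$ swaps left and right modules and reverses grading shifts, I expect to obtain, after $k$ applications, an isomorphism of the form $e_i\Pi\cong e_{\sigma^k(i)}\Pi\{m_i\}$ for $m_i=\pm\sum_{j=1}^k l(\sigma^j(i))$ (the sign and the precise index bookkeeping needs care, since $(-)^*$ is a duality and composing two dualities is covariant). Since $\sigma^k=\id$, the right-hand side is $e_i\Pi$ with a grading shift, so $m_i=0$ is forced unless $e_i\Pi$ is concentrated in a single degree — which it is not, as $\Pi$ has positive-degree parts. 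Wait: more carefully, $e_i\Pi\cong e_i\Pi\{m_i\}$ as graded modules forces $m_i=0$ because the Hilbert series must match. Hmm, but that would give $N=0$. The resolution is that $\sigma$ need not fix the tensor grading, or rather one must be more careful: I would instead iterate to get $e_i\Pi \cong (\Pi e_i)^*\{-\sum_{j \text{ odd}}\cdots\}$ type relations and extract that $\sum_{j=1}^k l(\sigma^j(i))$ equals a quantity manifestly independent of $i$ — for instance the top nonzero degree of $\Pi$, which is a global invariant. So the cleanest route: show the iterated isomorphism identifies $\sum_{j=1}^k l(\sigma^j(i))$ with (a fixed multiple of) the Nakayama / socle degree of $\Pi$, which does not depend on $i$. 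Call this common value $N$.

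For part (b), I would feed $N$ and $k$ into the derived-categorical computation. The Serre functor is $\nu = D\Lambda\otimes^{\mathbf L}_\Lambda -$, and the $2$-cluster-tilting object $M$ with $\Pi = \bigoplus_{i\ge 0}\hom_\Lambda(\Lambda,\tau_2^{-i}\Lambda)$ controls everything: on the additive closure of $M$ in $\derb(\mod\Lambda)$, $\nu_2 = \nu\Sigma^{-2}$ acts as $\tau_2^{-1}$ up to the data encoded by $l$. Using Lemma \ref{Lemma: l function}(a), $(\Lambda e_i)^* \cong \tau_2^{-l(i)}(\sigma(e_i)\Lambda)$, one tracks how $\nu$ permutes the indecomposable summands $\tau_2^{-r}(e_i\Lambda)$ and records the shift: applying $\nu$ once sends the summand indexed $(i,r)$ roughly to $(\sigma(i), r + \text{something})\langle \text{shift by }l\rangle$. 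After $k$ applications $\sigma^k=\id$ brings the index back, and part (a) says the accumulated $l$-shift is the same $N$ for every $i$, so $\nu^k$ acts on all of $\derb(\mod\Lambda)$ as a uniform combination of $\Sigma$ and $\tau_2$-powers. Converting $\tau_2 = \nu\Sigma^{-2}$ and solving the resulting relation between $\nu$, $\Sigma$ and the counts $k$, $N$ yields $(\nu,n)^m \cong (\Sigma,-\id)^{2N}$ with $m = k+N$, i.e. dimension $2N/(k+N)$. The main obstacle I anticipate is exactly the careful bookkeeping in part (a): getting the signs right when composing the duality $(-)^*$ an even number of times, correctly identifying $N$ as an $i$-independent invariant (rather than accidentally concluding $N=0$), and making sure the grading-shift conventions in Lemma \ref{Lemma: l function} are applied consistently when iterating. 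Once (a) is nailed down, (b) is a formal manipulation in the triangulated category using the known action of $\nu$ on the $2$-cluster-tilting subcategory.
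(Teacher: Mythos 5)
Your plan for part (a) has a genuine gap. You propose iterating Lemma \ref{Lemma: l function}(b), $e_i\Pi\cong(\Pi\sigma(e_i))^*\{-l(i)\}$, and expect to land on $e_i\Pi\cong e_{\sigma^k(i)}\Pi\{m_i\}$ with $m_i=\pm\sum_{j=1}^k l(\sigma^j(i))$. But that iteration does not produce the sum: dualising the statement of Lemma \ref{Lemma: l function}(b) gives $\Pi\sigma(e_i)\cong(e_i\Pi)^*\{-l(i)\}$, and feeding this back in gives
\[
e_i\Pi\;\cong\;(\Pi\sigma(e_i))^*\{-l(i)\}\;\cong\;\bigl((e_i\Pi)^*\{-l(i)\}\bigr)^*\{-l(i)\}\;\cong\;e_i\Pi\{l(i)\}\{-l(i)\}\;=\;e_i\Pi,
\]
a tautology in which the shifts cancel rather than accumulate. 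You notice the danger of getting $N=0$, but the resolution you sketch --- identifying $\sum_j l(\sigma^j(i))$ with the socle degree of $\Pi$ --- does not repair this: the socle degree of $e_i\Pi$ is precisely $l(i)$ itself, which varies with $i$, so it cannot serve as an $i$-independent global invariant. The shifts only accumulate once you track how $\sigma$ distorts the tensor grading on $\Pi$: the degree adjustment $e_jfe_i\in\Pi_r\Rightarrow e_{\sigma(j)}\sigma(f)e_{\sigma(i)}\in\Pi_{r+l(i)-l(j)}$ from \cite[\S3.4]{grant2020nakayama}. Iterating \emph{that}, and using $\sigma^k=\id$ together with connectedness of $Q$, forces $\sum_{m}l(\sigma^m(i))=\sum_m l(\sigma^m(j))$ whenever there is a nonzero path from $i$ to $j$, which is the content of \cite[Lem 6.15]{grant2022serre} that the paper cites.

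The paper's actual proof is much shorter than what you attempt: it records that $(\sigma,l)$ is a degree-adjusted automorphism (from the observation above), cites \cite[Lem 6.15]{grant2022serre} for (a), notes that Lemma \ref{Lemma: l function}(b) says $(\sigma,l)$ is a $\tr$-graded Nakayama automorphism, and cites \cite[Thm 6.14]{grant2022serre} for (b). Your outline of (b) --- tracking $\nu_2=\nu\Sigma^{-2}$ on the cluster-tilting subcategory, using (a) to see $\nu^k$ acts uniformly, then solving $\tau_2=\nu\Sigma^{-2}$ --- is morally the content of Grant's theorem and is the right picture, but as written it is an announcement rather than a proof; and it is in any case downstream of the part (a) gap.
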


\begin{proof}
In \cite[\S3.4]{grant2020nakayama}, it is demonstrated that if $f\in\hom_\Lambda(e_i\Lambda,\tau_2^{-r}(e_j\Lambda))$ then $\sigma(f)\in\hom_\Lambda(\sigma(e_i)\Lambda,\tau_2^{l(i)-l(j)-r}(\sigma(e_j)\Lambda))$. In other words, if $f\in\Pi_r$ (the degree $r$ part of $\Pi$) and $i,j\in Q_0$, then $e_jfe_i\in\Pi_{r+l(i)-l(j)}$. This means $(\sigma,l)$ is a \emph{degree adjusted automorphism} of $\Pi$, so \cite[Lem 6.15]{grant2022serre} gives us (a). By Lemma \ref{Lemma: l function}(b), $(\sigma,l)$ is a \emph{$\tr$-graded Nakayama automorphism} of $\Pi$, so \cite[Thm 6.14]{grant2022serre} yields (b). For definitions of the italicised terms see \cite{grant2022serre}.
\end{proof}

To apply this to $\Pi_\dd^s$, we need a technical lemma. In view of Corollary \ref{Corollary: Truncations are derived equivalent}, we can restrict our attention to a particular cut.

\begin{defn}
Let $s=3t+1$. Define a $\zz_3$-invariant cut $K'\subset\aa^s_1$ using the following diagram. \[\includegraphics[scale=0.75]{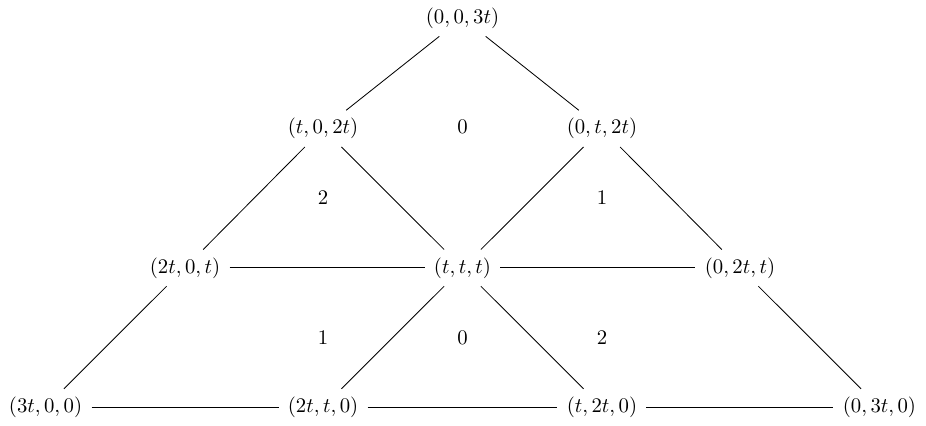}\] Here, the label $i$ indicates that all arrows $\alpha_i$ in that region (including those on the edge) should be cut. We denote by $K$ the induced cut of $(\dd^s,W_\dd^s)$, and by $g_K$ the induced grading on $\Pi_\dd^s$.
\end{defn}

See Figure \ref{Figure: Cuts} for $K$ and $K'$ in the case $t=2$. 

\begin{figure}
\centering
\includegraphics{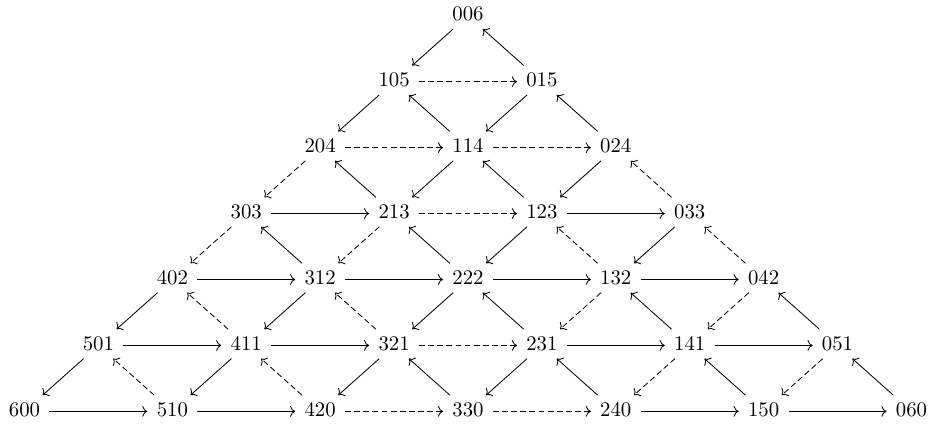}\\
\vspace{1em}
\includegraphics{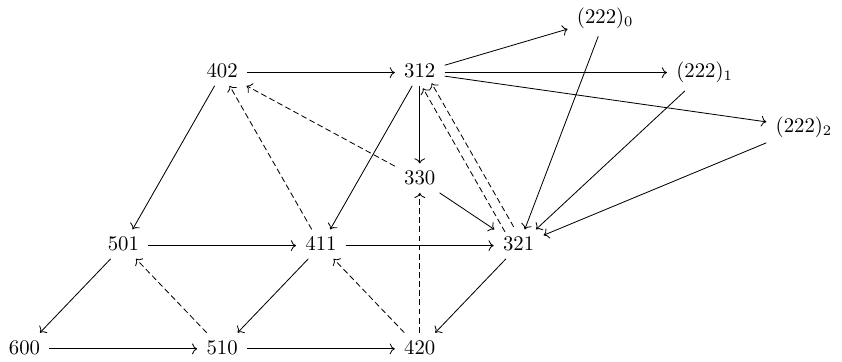}
\caption{The $\zz_3$-invariant cut $K$ of $\aa^7$ and the induced cut $K'$ of $\dd^7$, indicated by the dashed arrows.}
\label{Figure: Cuts}
\end{figure}

\begin{remark}
\label{Remark: Correspondence between cuts}
Note that all arrows in $K'$ appear in the leftmost set of \eqref{Equation: Induced cut} - see the proof of Lemma \ref{Lemma: D^s has enough cuts}. Hence, we have that \[e_x\alpha_{i,j}\in K\iff e_{\omega^j(x)}\alpha_i\in K'.\]
\end{remark}

In the following, $\soc(M)$ denotes the \textit{socle} (maximal semisimple submodule) of a module $M$. 

\begin{lemma}
\label{Lemma: Degree of longest path}
Let $s=3t+1$, $\Pi=\Pi^s_\dd$  and $x=(3t,0,0)\in\dd^s_0$. Then $\soc(e_x\Pi)=\langle p\rangle$, where \[p=
\begin{cases}
    e_x\alpha_0^\frac{3t-1}{2}\alpha_{2,1}\alpha_2^\frac{3t-1}{2} & \text{if $t$ is odd},\\
    e_x\alpha_0^\frac{3t}{2}\alpha_{2,1}\alpha_2^\frac{3t-2}{2} & \text{if $t$ is even}.
\end{cases}\]
In particular, $g_K(p)=t$.
\end{lemma}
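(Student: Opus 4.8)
The plan is to identify $\soc(e_x\Pi)$ explicitly by understanding the structure of the right projective $e_x\Pi$ over $\Pi = \Pi_\dd^s$. Since $\Pi$ is symmetric (Theorem \ref{Theorem 2 body}), each indecomposable projective $e_x\Pi$ has simple socle, so it suffices to exhibit one nonzero path $p$ starting at $x$ which is annihilated on the right by all arrows, i.e. lies in the socle, and to show it is the unique such (up to scalar). The vertex $x = (3t,0,0)$ is a ``corner'' of $\dd^s$: by Definition \ref{Defn: D^s} the only arrow out of $x$ is $e_x\alpha_0$ (since $x + f_1$ and $x + f_2$ fail to lie in $Q_0$), and dually the only arrow into $x$ is $\alpha_2 e_x$. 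So the relevant paths through the quiver are constrained to move ``inward'' along $\alpha_0$-arrows, possibly turn at the fixed vertex $X = (t,t,t)$, and come back out along $\alpha_2$-arrows.

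First I would trace the unique maximal path: starting at $(3t,0,0)$, repeatedly applying $\alpha_0$ (which sends $(a,b,c) \mapsto (a-1,b+1,c)$) takes us through $(3t-1,1,0), (3t-2,2,0), \dots$ toward the diagonal. Using the relations of $\Pi_\dd^s$ inherited from $\Pi_\aa^s$ — the zero relations on length-two paths turning off an edge, and the rhombus commutativity relations, plus the extra relations at $X$ from part 3 of Definition \ref{Defn: Potential on D^s} — one checks that any nonzero path from $x$ to the socle must go straight in along $\alpha_0$'s, pass through $X$ exactly once (using a turn $\alpha_{2,1}$, which at the fixed-vertex copies becomes the $\beta_k,\gamma_k$ arrows, but along the representative cut $K$ we can take the loop form $\alpha_{2,1}$), then go straight out along $\alpha_2$'s (which send $(a,b,c)\mapsto(a+1,b,c-1)$) back to $(3t,0,0)$. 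Counting: to get from $(3t,0,0)$ to $X=(t,t,t)$ one needs $2t$ steps total if one could only use $\alpha_0$ and $\alpha_1$, but since only $\alpha_0$ is available from the corner and the path must be the ``straightest'' one, the combinatorics of reaching $X$ and returning, combined with the single allowed turn, forces the exponents $\frac{3t-1}{2}$ and $\frac{3t-1}{2}$ when $t$ is odd (so that $3t-1$ is even) and $\frac{3t}{2}, \frac{3t-2}{2}$ when $t$ is even. The parity split is exactly the statement that $3t \pm 1$ must be made even by distributing one extra $\alpha_0$ on one side.

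The main obstacle will be the uniqueness/maximality argument: showing that $p$ is genuinely in the socle (every arrow kills it on the right) and that no other path survives. For the socle membership I would verify directly that $p\alpha$ involves a turn forbidden by a zero relation, or exits the quiver, for each arrow $\alpha$ with appropriate source; the endpoint of $p$ is again $x = (3t,0,0)$, from which the only arrow is $\alpha_0$, and $p\alpha_0$ would have to continue, but dimension/length considerations (or an explicit relation at the corner, since $p$ already ``used up'' its turn) force $p\alpha_0 = 0$. For uniqueness I would argue that the rhombus relations make any two paths with the same source, target, and length equal up to the choice of where the single turn at $X$ occurs, and the zero relations eliminate all but the symmetric choice dictated by parity. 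Once $p$ is pinned down, the final claim $g_K(p) = t$ is a bookkeeping computation: by Remark \ref{Remark: Correspondence between cuts}, $e_y\alpha_{i,j}\in K$ iff $e_{\omega^j(y)}\alpha_i \in K'$, and reading off the picture defining $K'$, the $\alpha_0$-arrows and $\alpha_2$-arrows along $p$ contribute to the $g_K$-degree exactly according to which side of the diagram's dividing lines they fall on; summing the contributions of the $\frac{3t\mp1}{2}$ $\alpha_0$-steps, the one turn, and the $\frac{3t-1}{2}$ or $\frac{3t-2}{2}$ $\alpha_2$-steps yields $t$ in both parity cases. I would double-check this count against the small cases $\dd^4$ ($t=1$, so $p = e_x\alpha_{2,1}$, $g_K(p) = 1$) and $\dd^7$ ($t=2$) using Figure \ref{Figure: Examples of D^s}.
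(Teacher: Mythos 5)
Your opening observation---that $\Pi$ is symmetric, so $e_x\Pi$ has simple socle, and hence it suffices to produce a single nonzero cyclic path at $x$ annihilated by all arrows---is a legitimate and genuinely different strategy from the paper's. The paper instead proves directly, by a length-by-length recursion, that $e_x\Pi[n]$ is one-dimensional for every $n\le 3t$, which makes $e_x\Pi$ uniserial and pins down the socle without appealing to weak symmetry. Your route would be shorter if it worked, because you only need information about the top layer rather than the whole radical series.

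However, there is a real gap: you never argue that $p\neq 0$. This is the crux of the proof, and the simple-socle shortcut gives you nothing here---it only tells you that once you have one nonzero socle element you have all of it. To see that the specific word $e_x\alpha_0^{\frac{3t-1}{2}}\alpha_{2,1}\alpha_2^{\frac{3t-1}{2}}$ does not collapse to zero under the Jacobian relations, you need something like the paper's recursion (which shows at each step that there is no relation reducing the extended path), or a normal-form/basis argument, or a pairing that detects $p$. Saying ``one checks'' is not enough: nontrivial cancellation chains in Jacobian algebras do happen, and this is precisely where the work lies. Similarly, your claim that ``dimension/length considerations force $p\alpha_0=0$'' needs to be an actual relation computation; the paper derives it from $\partial_{\alpha_1 e_{(3t-1,0,1)}}W_\dd^s$ together with the already-established vanishing at the previous length.

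There are also two factual errors worth flagging. First, the path $p$ does not pass through $X=(t,t,t)$ or any of the copies $X_k$: for odd $t$ it runs $(3t,0,0)\to(\tfrac{3t+1}{2},\tfrac{3t-1}{2},0)\to(\tfrac{3t+1}{2},0,\tfrac{3t-1}{2})\to\cdots\to(3t,0,0)$, with the single ``turn'' $\alpha_{2,1}$ occurring at $(\tfrac{3t+1}{2},\tfrac{3t-1}{2},0)$, well away from the centre, and $\alpha_{2,1}$ here is not a loop. Second, your $\dd^4$ sanity check is miscomputed: for $t=1$ the exponent is $\tfrac{3t-1}{2}=1$, so $p=e_x\alpha_0\alpha_{2,1}\alpha_2$ (length $3$), not $e_x\alpha_{2,1}$; this agrees with Example~\ref{Example: D^4}, where the longest element of $e_{300}\Pi$ is exactly $\alpha_0\alpha_{2,1}\alpha_2$. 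I'd encourage you to keep the symmetric-socle framing, but you must supply a nonvanishing argument for $p$, and the most natural one is essentially the paper's step-by-step analysis of $e_x\Pi[n]$.
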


\begin{proof}
Suppose $t\ge3$ is odd (the even case is analogous, and $t=1$ can be seen from Example \ref{Example: D^4}). The proof proceeds by recursively computing $e_x\Pi[n]$, the space of paths starting at $x$ with length $n$. Clearly $e_x\Pi[0]=\langle e_x\rangle$ and $e_x\Pi[1]=\langle e_x\alpha_0\rangle$. There are two ways to extend $e_x\alpha_0$ to a path of length 2: \[\includegraphics{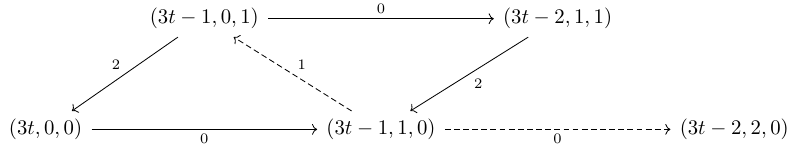}\] We have $\partial_{\alpha_2e_x}W_\dd^s=e_x\alpha_0\alpha_1$, so $e_x\alpha_0\alpha_1=0$. There is no relation reducing $e_x\alpha_0^2$, since the two arrows lie in 3-cycles which do not share a common edge. So certainly $e_x\alpha_0^2\neq0$ and $e_x\Pi[2]=\langle e_x\alpha_0^2\rangle$. 

Now let $2\le N\le\frac{3t-3}{2}$ and assume $e_x\Pi[N]=\langle e_x\alpha_0^N\rangle$. There are two ways to extend $e_x\alpha_0^N$ to a path of length $N+1$: \[\includegraphics{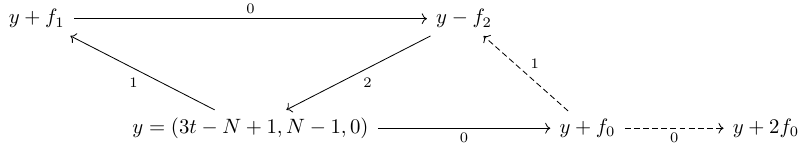}\] We have $\partial_{\alpha_2e_y}W_\dd^s=e_y(\alpha_0\alpha_1-\alpha_1\alpha_0)$. Now $e_x\Pi[N]=\langle e_x\alpha_0^N\rangle$, so $e_x\alpha_0^{N-1}\alpha_1=0$. Hence $e_x\alpha_0^N\alpha_1=e_x\alpha_0^{N-1}\alpha_1\alpha_0=0$. As before there is no relation reducing $e_x\alpha_0^{N+1}$, so this path generates $e_x\Pi[N+1]$. Therefore $e_x\Pi[n]=\langle e_x\alpha_0^n\rangle$ for all $n\le\frac{3t-1}{2}$.

There are two ways to extend $e_x\alpha_0^\frac{3t-1}{2}$ to a path of length $\frac{3t+1}{2}$: \[\includegraphics{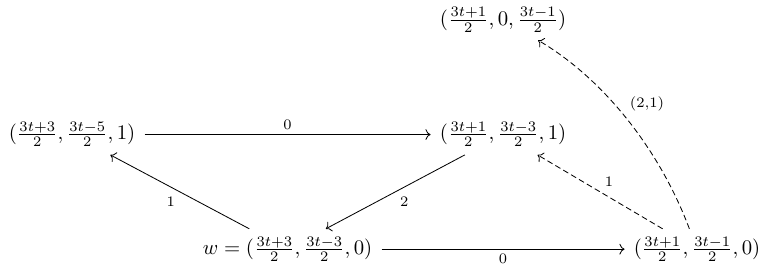}\] We have $\partial_{\alpha_2e_w}W_\dd^s=e_w(\alpha_0\alpha_1-\alpha_1\alpha_0)$, so $e_x\alpha_0^\frac{3t-1}{2}\alpha_1=e_x\alpha_0^\frac{3t-3}{2}\alpha_1\alpha_0=0$. There is no relation reducing $e_x\alpha_0^\frac{3t-1}{2}\alpha_{2,1}$ so this path generates $e_x\Pi[\frac{3t+1}{2}]$. 

There are two ways to extend $e_x\alpha_0^\frac{3t-1}{2}\alpha_{2,1}$ to a path of length $\frac{3t+3}{2}$: \[\includegraphics{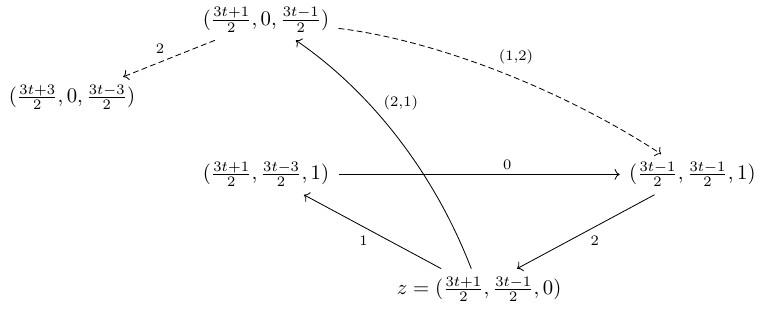}\] We have $\partial_{\alpha_2e_z}W_\dd^s=e_z(\alpha_{2,1}\alpha_{1,2}-\alpha_1\alpha_0)$, so $e_x\alpha_0^\frac{3t-1}{2}\alpha_{2,1}\alpha_{1,2}=e_x\alpha_0^\frac{3t-1}{2}\alpha_1\alpha_0=0$. There is no relation reducing $e_x\alpha_0^\frac{3t-1}{2}\alpha_{2,1}\alpha_2$ so this path generates $e_x\Pi[\frac{3t+3}{2}]$.

There are two ways to extend $e_x\alpha_0^\frac{3t-1}{2}\alpha_{2,1}\alpha_2$ to a path of length $\frac{3t+5}{2}$: \[\hspace{-0.25cm}\includegraphics{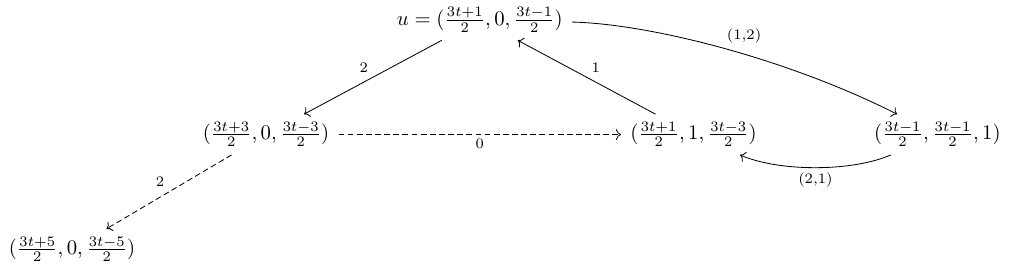}\]We have $\partial_{\alpha_1e_u}W_\dd^s=e_u(\alpha_2\alpha_0-\alpha_{1,2}\alpha_{2,1})$, so $e_x\alpha_0^\frac{3t-1}{2}\alpha_{2,1}\alpha_2\alpha_0=e_x\alpha_0^\frac{3t-1}{2}\alpha_{2,1}\alpha_{1,2}\alpha_{2,1}=0$. There is no relation reducing $e_x\alpha_0^\frac{3t-1}{2}\alpha_{2,1}\alpha_2^2$ so this path generates $e_x\Pi[\frac{3t+5}{2}]$.

Now let $2\le N\le\frac{3t-3}{2}$ and assume $e_x\Pi[\frac{3t+1}{2}+N]=\langle e_x\alpha_0^\frac{3t-1}{2}\alpha_{2,1}\alpha_2^N\rangle$. There are two ways to extend $e_x\alpha_0^\frac{3t-1}{2}\alpha_{2,1}\alpha_2^N$ to a path of length $\frac{3t+3}{2}+N$: \[\includegraphics{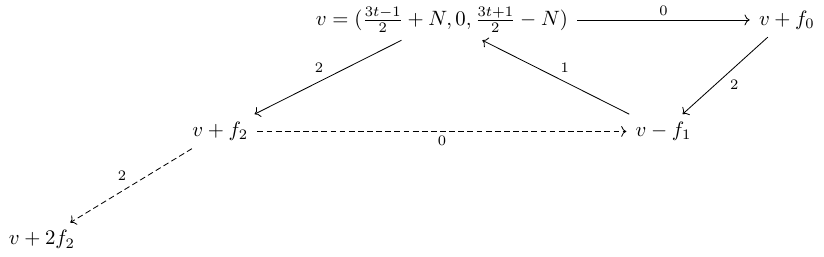}\] We have $\partial_{\alpha_1e_v}W_\dd^s=e_v(\alpha_2\alpha_0-\alpha_0\alpha_2)$, so $e_x\alpha_0^\frac{3t-1}{2}\alpha_{2,1}\alpha_2^N\alpha_0=e_x\alpha_0^\frac{3t-1}{2}\alpha_{2,1}\alpha_2^{N-1}\alpha_0\alpha_2=0$ by assumption. There is no relation reducing $e_x\alpha_0^\frac{3t-1}{2}\alpha_{2,1}\alpha_2^{N+1}$ so this path generates $e_x\Pi[\frac{3t+3}{2}+N]$. Therefore $e_x\Pi[\frac{3t+1}{2}+n]=\langle e_x\alpha_0^\frac{3t-1}{2}\alpha_{2,1}\alpha_2^n \rangle$ for all $n\le\frac{3t-1}{2}$.

Notice that $t(e_x\alpha_0^\frac{3t-1}{2}\alpha_{2,1}\alpha_2^\frac{3t-1}{2})=x$. There is only one arrow with source $x$, namely $e_x\alpha_0$ (see the first figure in the proof). But since $\partial_{\alpha_1e_{(3t-1,0,1)}}W_\dd^s=e_{(3t-1,0,1)}(\alpha_2\alpha_0-\alpha_0\alpha_2)$, we have $e_x\alpha_0^\frac{3t-1}{2}\alpha_{2,1}\alpha_2^\frac{3t-1}{2}\alpha_0=e_x\alpha_0^\frac{3t-1}{2}\alpha_{2,1}\alpha_2^\frac{3t-3}{2}\alpha_0\alpha_2=0$, so $e_x\Pi[n]=0$ for all $n>3t$.

Using Remark \ref{Remark: Correspondence between cuts}, \[g_K(p)=g_{K'}\left(e_x\alpha_0^\frac{3t-1}{2}\right)+g_{K'}\left(\alpha_2^\frac{3t+1}{2}e_x\right).\] By definition of $K'$, \[g_{K'}\left(e_x\alpha_0^\frac{3t-1}{2}\right)=\frac{t-1}{2},\ g_{K'}\left(\alpha_2^\frac{3t+1}{2}e_x\right)=\frac{t+1}{2},\] so $g_K(p)=t$.
\end{proof}

We now prove Theorem \ref{Theorem 3}.

\begin{theorem}
\label{Theorem 3 body}
Let $s=3t+1$. For any cut $C\subset\dd^s_1$, $\Pi_\dd^s/\langle C\rangle$ is fractional Calabi-Yau of dimension $2t/(t+1)$.
\end{theorem}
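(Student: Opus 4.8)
The plan is to apply Theorem~\ref{Theorem: FInite order Nakayama implies fCY} to $\Lambda = \Pi_\dd^s/\langle C\rangle$, with $\Pi = \Pi_\dd^s$. By Corollary~\ref{Corollary: Truncations are derived equivalent} all truncations $\Pi_\dd^s/\langle C\rangle$ are derived equivalent, and the fractional Calabi-Yau property is a derived invariant (by Definition~\ref{Defn: Fractional Calabi-Yau} it is a statement about autoequivalences of $\derb(\mod\Lambda)$), so it suffices to treat the single cut $K$. By Theorem~\ref{Theorem 2 body}, $\Pi_\dd^s$ is symmetric, so its Nakayama automorphism $\sigma$ is inner; up to adjusting by an inner automorphism we may take $\sigma = \id$, whence $\sigma^k = \id$ with $k = 1$. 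Theorem~\ref{Theorem: FInite order Nakayama implies fCY} then says there is an $N\in\zz$ with $l(i) = N$ for all $i\in\dd^s_0$ (the sum over $j=1$ to $k=1$ collapses), and that $\Lambda$ is fractional Calabi-Yau of dimension $2N/(1+N)$. Comparing with the claimed dimension $2t/(t+1)$, it remains only to show $N = t$.

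First I would pin down what $N$ measures. Since $\sigma=\id$, Lemma~\ref{Lemma: l function}(b) reads $e_i\Pi \cong (\Pi e_i)^*\{-l(i)\}$, i.e. $l(i)$ is the degree in which the socle of $e_i\Pi$ sits, with respect to the grading $g_K$ induced by the cut $K$. So $N = l(i) = g_K(p_i)$ where $p_i$ spans $\soc(e_i\Pi)$, and this is independent of $i$. Therefore it is enough to compute $g_K$ of the socle generator for a single convenient vertex. That is exactly the content of Lemma~\ref{Lemma: Degree of longest path}: for $x = (3t,0,0)$ the socle of $e_x\Pi$ is spanned by the explicit path $p$, and $g_K(p) = t$. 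Hence $N = t$, and $\Lambda$ is fractional Calabi-Yau of dimension $2t/(t+1)$.

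I would write the argument in roughly this order: (1) reduce to the cut $K$ via Corollary~\ref{Corollary: Truncations are derived equivalent} and derived invariance of the fCY property; (2) invoke Theorem~\ref{Theorem 2 body} to get that $\Pi_\dd^s$ is symmetric, hence $\sigma$ may be taken to be $\id$ and $k=1$; (3) apply Theorem~\ref{Theorem: FInite order Nakayama implies fCY} to obtain that $l$ is constant, equal to some $N$, and that the dimension is $2N/(N+1)$; (4) identify $N = l(x) = g_K(p)$ via Lemma~\ref{Lemma: l function}(b) applied at $x=(3t,0,0)$; (5) conclude $N = t$ by Lemma~\ref{Lemma: Degree of longest path}. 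A small point worth spelling out is why replacing $\sigma$ by an inner automorphism does not change the value of $N$: the function $l$ in Lemma~\ref{Lemma: l function} depends on $\sigma$, but since $\Pi$ is symmetric one checks the socle degree of $e_i\Pi$ is intrinsic, so any valid choice of $(\sigma,l)$ with $\sigma^k=\id$ gives the same $N$ up to the normalization forced by Theorem~\ref{Theorem: FInite order Nakayama implies fCY}(a).

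The main obstacle is conceptual bookkeeping rather than computation: one must be careful that the grading $g_K$ on $\Pi_\dd^s$ appearing in Lemma~\ref{Lemma: Degree of longest path} is the same as the tensor grading that underlies the function $l$ in Lemma~\ref{Lemma: l function}, which is guaranteed by the remark after Theorem~\ref{Theorem: Classification of 2-rep-finite algebras} (for a basic $2$-representation-finite truncated Jacobian algebra, $g_C$ corresponds to the tensor grading). Once that identification is in place, the proof is short, since Lemma~\ref{Lemma: Degree of longest path} has already done the genuine work of computing the relevant socle degree.
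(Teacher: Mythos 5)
Your proposal is correct and follows essentially the same route as the paper's proof: reduce to the single cut $K$ via Corollary~\ref{Corollary: Truncations are derived equivalent}, take $\sigma=\id$ and $k=1$ since $\Pi_\dd^s$ is symmetric, deduce from Theorem~\ref{Theorem: FInite order Nakayama implies fCY}(a) that $l$ is constant, pin down $N$ via Lemma~\ref{Lemma: l function}(b) and the socle computation of Lemma~\ref{Lemma: Degree of longest path} at $x=(3t,0,0)$, and conclude by Theorem~\ref{Theorem: FInite order Nakayama implies fCY}(b). The extra care you take in noting that $g_K$ must be the tensor grading (justified by the remark after Theorem~\ref{Theorem: Classification of 2-rep-finite algebras}) and that $\sigma=\id$ is a legitimate choice for a symmetric algebra are points the paper takes for granted but which deserve to be spelled out.
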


\begin{proof}
We prove the statement for $C=K$. This is enough, since all truncations of $\Pi_\dd^s$ are derived equivalent by Corollary \ref{Corollary: Truncations are derived equivalent}. Theorem \ref{Theorem: FInite order Nakayama implies fCY}(a) with $\sigma=\id$ and $k=1$ implies the function $l\colon\dd^s_0\to\zz$ from Lemma \ref{Lemma: l function} takes a constant value $N$. Let $x=(3t,0,0)\in\dd^s_0$. By Lemma \ref{Lemma: l function}(b), $e_x\Pi\cong D(\Pi e_x)\{-N\}$ as graded $\Pi$-modules. In Lemma \ref{Lemma: Degree of longest path} we demonstrated that $\soc(e_x\Pi)$ is generated by a path $p$ of degree $t$, so $e_x\Pi$ exists in degrees 0 through $t$. Since we also have $p\in\Pi e_x$, we deduce that $D(\Pi e_x)$ exists in degrees $-t$ through 0, and that $N=t$. Therefore $\Pi_\dd^s/\langle C\rangle$ is fractional Calabi-Yau of dimension $2t/(t+1)$ by Theorem \ref{Theorem: FInite order Nakayama implies fCY}(b).
\end{proof}

\begin{defn}
\cite[Def 1.2]{herschend2011n}
If the function $l\colon Q_0\to\zz$ from Lemma \ref{Lemma: l function} takes a constant value $N\in\zz$, we say $\Lambda$ is \emph{$(N+1)$-homogeneous}.
\end{defn}

The following is immediate from the proof of Theorem \ref{Theorem 3 body}.

\begin{corollary}
\label{Corollary: Pi_D is homogeneous}
Let $s=3t+1$. For any cut $C\subset\dd^s_1$, $\Pi_\dd^s/\langle C\rangle$ is $(t+1)$-homogeneous.
\end{corollary}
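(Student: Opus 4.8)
The plan is to extract $(t+1)$-homogeneity directly from the ingredients already assembled for Theorem \ref{Theorem 3 body}. By Theorem \ref{Theorem 2 body}, $\Pi=\Pi_\dd^s$ is symmetric, so its Nakayama automorphism is $\sigma=\id$, which has finite order $k=1$. Hence, for \emph{any} cut $C\subset\dd^s_1$, Theorem \ref{Theorem: FInite order Nakayama implies fCY}(a) applies and gives an integer $N$ with $l(i)=\sum_{j=1}^{1}l(\sigma^j(i))=N$ for all $i\in\dd^s_0$; that is, the function $l\colon\dd^s_0\to\zz$ of Lemma \ref{Lemma: l function} is constant. By the definition preceding the corollary, $\Pi_\dd^s/\langle C\rangle$ is therefore $(N+1)$-homogeneous, and it remains only to identify $N$.

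First I would pin down $N$ in the case $C=K$, exactly as in the proof of Theorem \ref{Theorem 3 body}: taking $x=(3t,0,0)$, Lemma \ref{Lemma: l function}(b) gives $e_x\Pi\cong(\Pi e_x)^*\{-N\}$ as graded modules, and Lemma \ref{Lemma: Degree of longest path} shows $\soc(e_x\Pi)=\langle p\rangle$ with $g_K(p)=t$, so $e_x\Pi$ lives in degrees $0$ through $t$ while $(\Pi e_x)^*$ lives in degrees $-t$ through $0$; comparing forces $N=t$. For a general cut $C$ one can then invoke Corollary \ref{Corollary: Truncations are derived equivalent} — or, more directly, observe that Theorem \ref{Theorem 3 body} already establishes fractional Calabi-Yau dimension $2t/(t+1)$ for every cut, while Theorem \ref{Theorem: FInite order Nakayama implies fCY}(b) with $k=1$ computes this dimension as $2N/(N+1)$; since $N\mapsto 2N/(N+1)$ is injective on positive integers, $N=t$ in all cases.

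I expect no real obstacle here: the corollary is essentially a repackaging of the proof of Theorem \ref{Theorem 3 body} in the language of homogeneity. The only point requiring a line of care is confirming that the constant value of $l$ is the same $N=t$ for \emph{every} cut $C$ and not merely for $C=K$; this is handled by the injectivity remark above (equivalently by the derived-equivalence invariance of the homogeneity index under Corollary \ref{Corollary: Truncations are derived equivalent}).
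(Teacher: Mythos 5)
Your proposal is correct and follows essentially the same path as the paper, which declares the corollary ``immediate from the proof of Theorem~\ref{Theorem 3 body}'': you unpack exactly the ingredients used there, namely Theorem~\ref{Theorem: FInite order Nakayama implies fCY}(a) with $\sigma=\id$, $k=1$ to get constancy of $l$, Lemma~\ref{Lemma: Degree of longest path} to pin the constant down as $t$ for $C=K$, and Corollary~\ref{Corollary: Truncations are derived equivalent} to pass to a general cut. The only point worth flagging is that your ``more direct'' alternative for general $C$ tacitly uses that the fractional Calabi--Yau dimension of a fixed algebra is well-defined as a rational number (so that $2N/(N+1)=2t/(t+1)$ forces $N=t$); this is true because $\Sigma^n\cong\id$ on $\derb(\mod\Lambda)$ forces $n=0$ when $\Lambda\neq 0$, but it is not spelled out in the paper.
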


\section{2-Auslander-Reiten quivers}

\label{Section: 2AR Quivers}

The general theory in this section is from \cite[\S1]{iyama2011cluster}.

Let $\Lambda$ be a $d$-representation-finite algebra with a $d$-cluster tilting object $M\in\mod\Lambda$. Then $\add M$ is equal to the subcategory $\mm\coloneqq\add\{\tau_d^{-i}\Lambda\mid i\ge0\}\subset\mod\Lambda$ \cite[Thm 1.6]{iyama2011cluster}. We call $\mm$ the \emph{$d$-cluster tilting subcategory} of $\mod\Lambda$.

The functors $\tau_d$, $\tau_d^-$ play the role of Auslander-Reiten translates on this subcategory: they induce mutually quasi-inverse equivalences $\tau_d\colon\underline{\mm}\to\overline{\mm}$ and $\tau^-_d\colon\overline{\mm}\to\underline{\mm}$ between the stable and costable categories. 

Thus, it makes sense to consider a \emph{$d$-Auslander-Reiten quiver} of $\mod\Lambda$, whose vertices are indecomposable objects of $\mm$, and the number of arrows from $X$ to $Y$ is equal to the dimension of $\rad_\mm(X,Y)/\rad^2_\mm(X,Y)$, where $\rad$ denotes the Jacobson radical \cite[Def 6.1]{iyama2011cluster}.

\begin{example}
Let $\Pi=\Pi^4_\dd$, $C=\{\alpha_1,\alpha_{2,1}\}$ and $\Lambda=\Pi/\langle C\rangle$. To simplify notation relabel as follows. \[\includegraphics{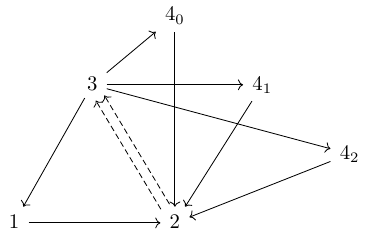}\] Using the definition $\tau_2^-=\tau^-\Omega^-$, one computes the radical series
\begin{gather*}
	e_1\Lambda=\begin{smallmatrix}1\\2\end{smallmatrix},\ e_2\Lambda=\begin{smallmatrix}2\end{smallmatrix},\ e_3\Lambda=\begin{smallmatrix}3\\1\,4_0\,4_1\,4_2\\2\,2\end{smallmatrix},\ e_{4_k}\Lambda=\begin{smallmatrix}4_k\\2\end{smallmatrix},\\
	\tau_2^-(e_1\Lambda)=\begin{smallmatrix}3\\1\end{smallmatrix},\ \tau_2^-(e_2\Lambda)=\begin{smallmatrix}3\,3\\1\,4_0\,4_1\,4_2\\2\end{smallmatrix},\ \tau_2^-(e_3\Lambda)=\begin{smallmatrix}3\end{smallmatrix},\ \tau_2^-(e_{4_k})=\begin{smallmatrix}3\\4_k\end{smallmatrix}.
\end{gather*}
The 2-Auslander-Reiten quiver of $\mm\subset\mod\Lambda$ is as follows. The dashed lines indicate the action of $\tau_2^-$. \[\includegraphics{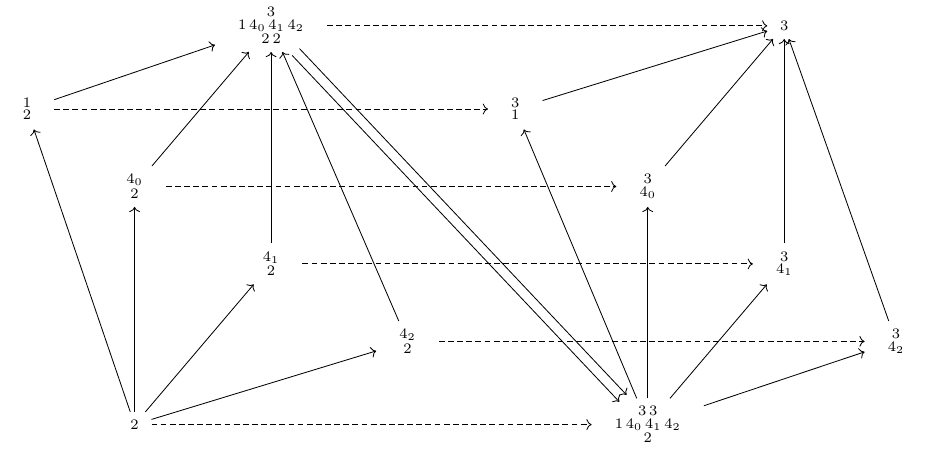}\]
\end{example}

Given a $d$-representation-finite algebra $\Lambda$, there is also a notion of higher Auslander-Reiten theory on its bounded derived category $\dd=\derb(\mod\Lambda)$. Here the role of Auslander-Reiten translate is played by the autoequivalence $\nu_d\coloneqq\nu\circ\Sigma^{-d}\colon\dd\to\dd$, and the $d$-cluster tilting subcategory is $\uu\coloneqq\add\{\nu_d^{-i}\mid i\in\zz\}\subset\dd$. This definition is justified by the fact that for any $X\in\mm$ with no injective summands, $\tau_d ^-X\cong\nu_d^- X$.

\begin{example}
The 2-Auslander-Reiten quiver of $\uu\subset\dd$ is as follows. \[\hspace{-1cm}\includegraphics[scale=0.75]{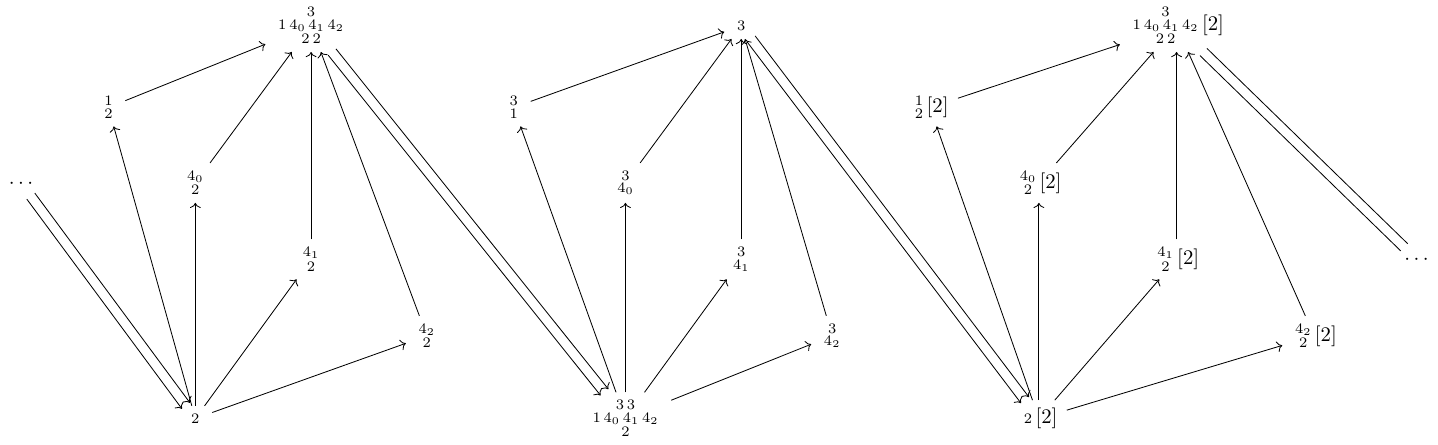}\]

On $\uu$, the Serre functor $\nu$ has orbits
\begin{gather*}
    \begin{smallmatrix}2\end{smallmatrix}\mapsto\begin{smallmatrix}3\,3\\1\,4_0\,4_1\,4_2\\2\end{smallmatrix}\mapsto\begin{smallmatrix}2
    \end{smallmatrix}[2],\\
    \begin{smallmatrix}1\\2
    \end{smallmatrix}\mapsto\begin{smallmatrix}3\\1
    \end{smallmatrix}\mapsto\begin{smallmatrix}1\\2
    \end{smallmatrix}[2],\\
    \begin{smallmatrix}4_k\\2
    \end{smallmatrix}\mapsto\begin{smallmatrix}3\\4_k
    \end{smallmatrix}\mapsto\begin{smallmatrix}4_k\\2
    \end{smallmatrix}[2],\\
    \begin{smallmatrix}3\\1\,4_0\,4_1\,4_2\\2\,2
    \end{smallmatrix}\mapsto\begin{smallmatrix}3
    \end{smallmatrix}\mapsto\begin{smallmatrix}3\\1\,4_0\,4_1\,4_2\\2\,2
    \end{smallmatrix}[2],
\end{gather*}
aligning with the fact $\Lambda$ is $2/2$-fractional Calabi-Yau.

By homogeneity (Corollary \ref{Corollary: Pi_D is homogeneous}), if $\Lambda=\Pi_\dd^{3t+1}/\langle C\rangle$, the Serre functor on $\uu$ has disjoint orbits
\[e_x\Lambda\mapsto\tau_2^-(e_x\Lambda)\mapsto\cdots\mapsto\tau^{1-t}_2(e_x\Lambda)\mapsto D(\Lambda e_x)\mapsto e_x\Lambda[2t]\]
for each $x\in\dd^{3t+1}_0$. The orbits are disjoint since $\tau_2^{-i}(e_x\Lambda)\cong\tau_2^{-j}(e_y\Lambda)$ implies $\tau_2^{j-i}(e_x\Lambda)\cong e_y\Lambda$. Since $e_y\Lambda$ is projective, we must have $i=j$, whence $x=y$ since  $\Lambda$ is basic.
\end{example}

We give a recipe to construct the 2-Auslander-Reiten quiver of a 2-representation-finite algebra of type D.

\begin{prop}
\label{Prop: Recipe for 2AR quiver}
Let $s=3t+1$, $C$ be a cut of $\Pi=\Pi^s_\dd$, and $\Lambda=\Pi/\langle C\rangle$. The 2-Auslander-Reiten quiver of $\uu\subset\derb(\mod\Lambda)$ is isomorphic to the quiver $\Gamma$ with vertices \[\Gamma_0=\{(x,i)\mid x\in\dd^s_0,\ i\in\zz\},\] where there is an arrow $(x,i)\longrightarrow(y,j)$ in $\Gamma_1$ if and only if
\begin{enumerate}
    \item $i=j$ and there is an arrow $y\longrightarrow x$ in $\dd^s_1\backslash C$, or
    \item $j=i+1$ and there is an arrow $y\longrightarrow x$ in $C$.
\end{enumerate}
The 2-Auslander-Reiten quiver of $\mm\subset\mod\Lambda$ is isomorphic to the full subquiver $\Gamma'$ of $\Gamma$ on the vertices \[\Gamma_0'=\{(x,i)\mid x\in\dd^s_0,\ 0\le i\le t\}.\]
\end{prop}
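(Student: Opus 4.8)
The plan is to build the isomorphism $\Gamma \cong (\text{2-AR quiver of }\uu)$ by exhibiting an explicit bijection on vertices, $(x,i) \mapsto \nu_2^{-i}(e_x\Lambda)$, and then checking it is compatible with arrows. First I would record that $\uu = \add\{\nu_2^{-i}(e_x\Lambda) \mid x \in \dd^s_0,\ i \in \zz\}$, so the indecomposable objects of $\uu$ are exactly the $\nu_2^{-i}(e_x\Lambda)$; to see these are pairwise non-isomorphic for distinct $(x,i)$ one argues as in the preceding example (if $\nu_2^{-i}(e_x\Lambda) \cong \nu_2^{-j}(e_y\Lambda)$ then $\nu_2^{j-i}(e_x\Lambda)$ is projective, and since $\nu_2^-$ restricted to the non-projective part of $\mm$ agrees with $\tau_2^-$ and $\nu_2$ is an autoequivalence, one forces $i = j$, then $x = y$ by basicness). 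Thus the vertex map is a bijection $\Gamma_0 \to (\text{2-AR quiver of }\uu)_0$.

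Next I would compute the arrows. The number of arrows $\nu_2^{-i}(e_x\Lambda) \to \nu_2^{-j}(e_y\Lambda)$ in the 2-AR quiver of $\uu$ equals $\dim \rad_\uu(-,-)/\rad^2_\uu(-,-)$; since $\nu_2$ is an autoequivalence I may shift and reduce to computing $\rad_\uu/\rad_\uu^2$ between $e_x\Lambda$ and $\nu_2^{-(j-i)}(e_y\Lambda)$. The key input is the description of $\Pi = \Pi^s_\dd$ as graded by $g_C$, with $\Pi_0 = \Lambda$ and higher graded pieces governing the $\tau_2^-$-orbits: concretely, by \cite[\S1]{iyama2011cluster} (and the fact that $\Pi$ is the 3-preprojective algebra of $\Lambda$, Theorem \ref{Theorem 2 body}), $\hom_\Lambda(e_x\Lambda, \tau_2^{-r}(e_y\Lambda)) \cong e_y \Pi_r e_x$ as vector spaces, compatibly with composition. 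So irreducible maps in $\uu$ correspond to arrows of $\Pi$, with an arrow of $\Pi$ of $g_C$-degree $0$ (i.e.\ in $\dd^s_1 \setminus C$) giving an arrow within a fixed $i$-level and an arrow of $g_C$-degree $1$ (i.e.\ in $C$) raising the level by one. The direction is reversed because $\hom_\Lambda(e_x\Lambda, e_y\Lambda) \cong e_y \Pi_0 e_x = e_y \Lambda e_x$, so a path $y \to x$ in the quiver gives a map $e_x\Lambda \to e_y\Lambda$. This is exactly conditions (1) and (2). I should also check there are no "extra" irreducible maps: any map in $\uu$ factors through $\mm$-objects or shifts thereof and is built from the arrows of $\Pi$, so $\rad_\uu/\rad_\uu^2$ is spanned precisely by the arrow-images — this uses that $\Pi$ is generated in degrees $0$ and $1$ over $\Pi_0$, i.e.\ by its arrows, which holds for a Jacobian algebra.

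For the statement about $\mm \subset \mod\Lambda$: by \cite[Thm 1.6]{iyama2011cluster}, $\mm = \add\{\tau_2^{-i}\Lambda \mid i \ge 0\}$, and its indecomposables are the $\tau_2^{-i}(e_x\Lambda)$ for $i \ge 0$ that are non-zero; by Corollary \ref{Corollary: Pi_D is homogeneous} every $e_x\Lambda$ is $(t+1)$-homogeneous, so the $\tau_2^-$-orbit of $e_x\Lambda$ inside $\mm$ has exactly the $t+1$ terms $e_x\Lambda, \tau_2^-(e_x\Lambda), \dots, \tau_2^{1-t}(e_x\Lambda)$ (equivalently $e_x\Pi$ lives in degrees $0$ through $t$, as shown in Lemma \ref{Lemma: Degree of longest path} and used in Theorem \ref{Theorem 3 body}), with the next term $(\Lambda e_x)^*$ injective. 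Identifying $\tau_2^{-i}(e_x\Lambda) \cong \nu_2^{-i}(e_x\Lambda)$ for $0 \le i \le t$ and noting that irreducible maps between objects of $\mm$ are the same whether computed in $\mm$ or in $\uu$ (since $\mm$ is a full subcategory closed under the relevant extensions), the 2-AR quiver of $\mm$ is the full subquiver of $\Gamma$ on $\{(x,i) : 0 \le i \le t\}$, which is $\Gamma'$.

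I expect the main obstacle to be the careful bookkeeping identifying $\rad_\uu/\rad_\uu^2$ with the span of the arrows of $\Pi$ — that is, showing that irreducible morphisms in the cluster-tilting subcategory correspond bijectively to arrows of the 3-preprojective algebra, with the grading $g_C$ controlling which level they connect, and that no irreducible maps are missed or double-counted. The homogeneity statement cleanly cuts off the module-category quiver at level $t$, so that half is comparatively routine once the derived-category half is in place.
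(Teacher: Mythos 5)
Your proposal follows essentially the same route as the paper's proof: establish the bijection on vertices via $\uu = \add\{\nu_2^{-i}(e_x\Lambda)\}$ and basicness, identify $\hom_\uu(\nu_2^{-i}(e_x\Lambda),\nu_2^{-j}(e_y\Lambda))$ with $e_y\Pi_{j-i}e_x$ using the tensor grading, read off arrows from the degree-0 and degree-1 parts of $\Pi$ (with the $j<i$ and $j>i+1$ cases ruled out by non-negative grading and generation in degree 1), and then truncate to $0\le i\le t$ using $(t+1)$-homogeneity. The only stylistic difference is that the paper handles the "no extra irreducible maps" step by explicitly splitting into the cases $j<i$ (zero Hom) and $j>i+1$ (factorisation into lower-degree pieces), whereas you compress this into a single remark; the substance is identical.
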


\begin{proof}
The vertices of the 2AR quiver of $\uu$ are the isoclasses of its indecomposable objects. Now
\begin{align*}
    \uu&=\add\{\nu_2^{-i}\Lambda\mid i\in\zz\}\\
    &=\add\left\{\nu_2^{-i}\left(\bigoplus_{x\in\dd_0^s}e_x\Lambda\right)\mid i\in\zz\right\}\\
    &=\add\left\{\bigoplus_{x\in\dd_0^s}\nu_2^{-i}(e_x\Lambda)\mid i\in\zz\right\}\\
    &=\add\{\nu_2^{-i}(e_x\Lambda)\mid x\in\dd^s_0,\ i\in\zz\}.  
\end{align*}
Since $\Lambda$ is basic, $\{\nu_2^{-i}(e_x\Lambda)\mid x\in\dd^s_0,\ i\in\zz\}$ is the set of isoclasses of indecomposable objects, which is clearly in bijection with the set $\Gamma_0$ defined above.

Note \cite[\S2]{herschend2011n} that $\Pi\cong\bigoplus_{r\in\zz}\hom_\uu(\Lambda,\nu_2^{-r}\Lambda)$, so for all $x,y\in\dd^s_0$ and $i,j\in\zz$, \[\hom_\uu(\nu_2^{-i}(e_x\Lambda),\nu_2^{-j}(e_y\Lambda))\cong\hom_\uu(e_x\Lambda,\nu_2^{i-j}(e_y\Lambda))\cong e_y\Pi_{j-i}e_x\] as vector spaces, where $\Pi_{j-i}$ is the degree $j-i$ part of $\Pi$.

We now show statements 1 and 2 of the proposition.
    \begin{enumerate}
        \item Suppose $i=j$. Then $\hom_\uu(\nu_2^{-i}(e_x\Lambda),\nu_2^{-i}(e_y\Lambda))$ has a basis consisting of paths from $y$ to $x$ in $\Lambda$ (since the degree zero part of $\Pi$ is $\Lambda$). In particular, the irreducible morphisms from $\nu_2^{-i}(e_x\Lambda)$ to $\nu_2^{-i}(e_y\Lambda)$ in $\uu$ (i.e. arrows $(x,i)\longrightarrow(y,i)$ in $\Gamma_1$) are in bijection with arrows $y\longrightarrow x$ in $\dd^s_1\backslash C$.
        \item Suppose $j=i+1$. Then $\hom_\uu(\nu_2^{-i}(e_x\Lambda),\nu_2^{-(i+1)}(e_y\Lambda))$ has a basis consisting of degree one paths from $y$ to $x$ in $\Pi$. In particular, the irreducible morphisms from $\nu_2^{-i}(e_x\Lambda)$ to $\nu_2^{-(i+1)}(e_y\Lambda)$ in $\uu$ (i.e. arrows $(x,i)\longrightarrow(y,i+1)$ in $\Gamma_1$) are in bijection with arrows $y\longrightarrow x$ in $C$.
    \end{enumerate}
These are all the arrows in $\Gamma_1$. Indeed, if $j<i$ then $\hom_\uu(\nu_2^{-i}(e_x\Lambda),\nu_2^{-j}(e_y\Lambda))=0$ since $\Pi$ is non-negatively graded. Conversely if $j>i+1$, then no morphism in $\hom_\uu(\nu_2^{-i}e_x\Lambda,\nu_2^{-j}(e_y\Lambda))$ is irreducible, since it corresponds to a path in $\Pi$ from $y$ to $x$ of degree $j-i>1$, which can be factorised into two paths of lower degree since $\Pi$ is generated in degree 1.

To see $\Gamma'$ is as claimed, note that its set of vertices is $\{\nu_2^{-i}(e_x\Lambda)\mid x\in\dd^s_0,\ 0\le i\le t\}$, which is clearly in bijection with the set $\Gamma_0'$ above. This is because $\Lambda$ is $(t+1)$-homogeneous, so $\tau_2^{-i}(e_x\Lambda)\cong\nu_2^{-i}(e_x\Lambda)$ for $0\le i\le t$, and $\tau_2^{-(t+1)}(e_x\Lambda)=0$.
\end{proof}

\section{Further questions}

We present some possible directions for further research in this area.
\begin{enumerate}
    \item \textbf{Basis and dimension:} Finding a basis for $\Pi_\dd^s$ can be difficult, even for small $s$.
    \begin{example}
    \label{Example: D^4}
        Let $\Pi=\Pi^4_\dd$ (see Figure \ref{Figure: Examples of D^s} and Example \ref{Example: Potentials on D^4, D^5}). Then
        \begin{align*}
            e_{300}\Pi&=\langle e_{300}, \alpha_0, \alpha_0\alpha_{2,1}, \alpha_0\alpha_{2,1}\alpha_2\rangle,\\
            e_{210}\Pi&=\langle e_{210}, \alpha_1, \alpha_{2,1}, \alpha_1\beta_0, \alpha_1\beta_1, \alpha_1\beta_2, \alpha_{2,1}\alpha_2, \alpha_{2,1}\alpha_2\alpha_0 \rangle,\\
            e_{201}\Pi&=\langle e_{201}, \alpha_2, \beta_0, \beta_1, \beta_2, \alpha_2\alpha_0, \beta_0\gamma_0, \alpha_2\alpha_0\alpha_{2,1} \rangle,\\
            e_{(111)_k}\Pi&=\langle e_{(111)_k}, \gamma_k, \gamma_k\alpha_1, \gamma_k\alpha_1\beta_k \rangle \text{ for all $k\in\{0,1,2\}$,}
        \end{align*}
        so we obtain a basis for $\Pi$. In particular, $\dim_\cc\Pi=32$.
        
        One calculates as in Lemma \ref{Lemma: Degree of longest path}, recursively computing the space of paths starting at each vertex with a given length. To make these calculations we had to derive some additional relations:
        \begin{enumerate}
            \item $\alpha_1\beta_0\gamma_0=\zeta^2\alpha_1\beta_1\gamma_1=\zeta\alpha_1\beta_2\gamma_2$ and $\alpha_{2,1}\beta_0\gamma_0=\alpha_{2,1}\beta_1\gamma_1=\alpha_{2,1}\beta_2\gamma_2$;
            \item $\beta_0\gamma_0\alpha_1=\zeta^2\beta_1\gamma_1\alpha_1=\zeta\beta_2\gamma_2\alpha_1$ and $\beta_0\gamma_0\alpha_{2,1}=\beta_1\gamma_1\alpha_{2,1}=\beta_2\gamma_2\alpha_{2,1}$;
            \item if $i\neq j$ then $\gamma_i\alpha_1\beta_j=\gamma_i\alpha_{2,1}\beta_j=0$.
        \end{enumerate}
        To see (a), note that
        \begin{gather*}
            \alpha_{2,1}\beta_2\gamma_2=-\zeta\alpha_{2,1}\beta_0\gamma_0-\zeta^2\alpha_{2,1}\beta_1\gamma_1=-\zeta\alpha_1\beta_0\gamma_0-\zeta\alpha_1\beta_1\gamma_1, \text{ but also}\\
            \alpha_{2,1}\beta_2\gamma_2=\zeta\alpha_1\beta_2\gamma_2=-\zeta^2\alpha_1\beta_0\gamma_0-\alpha_1\beta_1\gamma_1,
        \end{gather*}
        so $\zeta\alpha_1\beta_0\gamma_0+\zeta\alpha_1\beta_1\gamma_1=\zeta^2\alpha_1\beta_0\gamma_0+\alpha_1\beta_1\gamma_1$, which rearranges to give $\alpha_1\beta_0\gamma_0=\zeta^2\alpha_1\beta_1\gamma_1$. One similarly shows $\alpha_1\beta_0\gamma_0=\zeta\alpha_1\beta_2\gamma_2$, and the fact $\alpha_1\beta_k\gamma_k=\zeta^k\alpha_{2,1}\beta_k\gamma_k$ for all $k\in\{0,1,2\}$ implies the second statement.
        
        Part (b) is demonstrated analogously. To see (c), assume $i\neq j$. Then $\gamma_i\alpha_1\beta_j=\zeta^i\gamma_i\alpha_{2,1}\beta^j$, but also $\gamma_i\alpha_1\beta_j=\zeta^j\gamma_i\alpha_{2,1}\beta_j$. This is a contradiction unless $\gamma_i\alpha_1\beta_j=\gamma_i\alpha_{2,1}\beta_j=0$.
    \end{example}
    \item $\mathbf{d>2}$\textbf{:} The $(d+1)$-preprojective algebras of type A are defined for all $d\in\zz^+$ ($d=1$ recovers the classical preprojective algebra of a type A quiver). Furthermore, quotienting by the ideal generated by a cut gives a $d$-representation-finite algebra \cite[\S5]{iyama2011n}. Let $\Pi_\aa$ be a $(d+1)$-preprojective algebra of type A, for $d>2$. Some questions are:
    \begin{enumerate}
        \item Can we find a group $G$ - maybe $\zz_{d+1}$ - acting on $\Pi_\aa$ by automorphisms?
        \item Can we generalise Definitions \ref{Defn: D^s} and \ref{Defn: Potential on D^s} to give an algebra $\Pi_\dd$ that is Morita equivalent to $\Pi_\aa\#G$?
        \item Is $\Pi_\dd$ the $(d+1)$-preprojective algebra of some $d$-representation-finite algebra $\Lambda$ - maybe only when its index is 1 (mod $d+1$)?
        \item Is $\Lambda$ fractional Calabi-Yau?
    \end{enumerate}
    The methods used would likely have to be different to those used in this article. Perhaps most significantly, when $d>2$ one no longer has the Herschend-Iyama classification (Theorem \ref{Theorem: Classification of 2-rep-finite algebras}).
    
    \item \textbf{Auslander algebras:} In \cite{iyama2011cluster}, Iyama studied iterated Auslander algebras of Dynkin quivers. If one takes the Auslander algebra of a linearly oriented type A quiver, one obtains a 2-representation-finite algebra of type A. For $s=3t+1$ and a cut $C$ of $\Pi_\dd^s$, is there a type D quiver whose Auslander algebra is $\Lambda=\Pi_\dd^s/\langle C\rangle$?
    
    If $C$ contains one element of $\{e_{(t+1,t,t-1)}\alpha_1, e_{(t+1,t,t-1)}\alpha_{2,1}\}$, then it also contains the other. This is because there must be precisely one cut arrow in each of the cycles $\alpha_1\beta_0\gamma_0$ and $\alpha_{2,1}\beta_0\gamma_0$. Hence, either $\Lambda$ or $C$ contains two parallel arrows between a pair of vertices. As such, the 2-Auslander-Reiten quiver of $\mm\subset\mod\Lambda$ contains parallel arrows (see Proposition \ref{Prop: Recipe for 2AR quiver}).

    In contrast, if $Q$ is a type D quiver and $\Gamma$ is its Auslander algebra, the 2-Auslander-Reiten quiver of $\mm\subset\mod\Gamma$ contains no parallel arrows - use the recipe of \cite[Def 6.11]{iyama2011cluster}. This means $\Gamma$ and $\Lambda$ are not even Morita equivalent, since the module category of a 2-representation-finite algebra has a \emph{unique} 2-cluster-tilting subcategory \cite[Thm 1.6]{iyama2011cluster}.

    We cannot argue similarly with $\uu\subset\derb(\mod\Lambda)$. Indeed, $2$-representation-finiteness is not preserved under derived equivalence (see \cite[Rem 1.6]{herschend2011n} for a nice example). So, the question becomes: is there a type D quiver whose Auslander algebra is derived equivalent to $\Lambda$?
\end{enumerate}

\textbf{Acknowledgments:} This paper was written while the author was a PhD student at the University of East Anglia. The author thanks Joseph Grant, his supervisor, for suggesting the project and providing guidance
throughout its completion. The author also thanks Alastair King for pointers to the literature, and an anonymous referee for helpful comments and corrections.

\textbf{Version of Record:} This article has been accepted for publication, after peer review, in Algebras and Representation Theory (Springer). DOI: 10.1007/s10468-024-10297-3. 





\newpage
\bibliographystyle{alpha}
\bibliography{references.bib}

\end{document}